\journalname{}
\definecolor{gab}{HTML}{50c878}
\definecolor{pat}{HTML}{bf68ff}
\definecolor{sham}{HTML}{7faaff}
\newcommand{\myalg}{\mathrm{QARSTA}}
\newtheorem{assump}{Assumption}
\newtheorem{defini}{Definition}
\newtheorem{exampl}{Example}
\newtheorem{lemm}{Lemma}
\newtheorem{rema}{Remark}
\newtheorem{theo}{Theorem}
\newcommand\numberthis{\addtocounter{equation}{1}\tag{\theequation}}
\newcommand{\appdiam}{\overline{\mathrm{diam}}}
\newcommand*{\rom}[1]{\expandafter\@slowromancap\romannumeral #1@}
\title{$Q$-fully quadratic modeling and its application in a random subspace derivative-free method}
\author{Yiwen Chen\and Warren Hare\and Amy Wiebe}
\institute{Department of Mathematics, University of British Columbia, Kelowna, British Columbia, V1V 1V7, Canada.\\
This research is partially funded by the Natural Sciences and Engineering Research Council of Canada (cette recherche est partiellement financ\'ee par le Conseil de recherches en sciences naturelles et en g\'enie du Canada), Discover Grant \#2023-03555 and by the Mitacs Globalink Graduate Fellowship.\\
\email{yiwchen@student.ubc.ca, warren.hare@ubc.ca, amy.wiebe@ubc.ca}}
\date{\today}
\begin{document}

\maketitle

\begin{abstract}
Model-based derivative-free optimization (DFO) methods are an important class of DFO methods that are known to struggle with solving high-dimensional optimization problems.  Recent research has shown that incorporating random subspaces into model-based DFO methods has the potential to improve their performance on high-dimensional problems.  However, most of the current theoretical and practical results are based on linear approximation models due to the complexity of quadratic approximation models.  This paper proposes a random subspace trust-region algorithm based on quadratic approximations.  Unlike most of its precursors, this algorithm does not require any special form of objective function.  We study the geometry of sample sets, the error bounds for approximations, and the quality of subspaces.  In particular, we provide a technique to construct $Q$-fully quadratic models, which is easy to analyze and implement.  We present an almost-sure global convergence result of our algorithm and give an upper bound on the expected number of iterations to find a sufficiently small gradient.  We also develop numerical experiments to compare the performance of our algorithm using both linear and quadratic approximation models.  The numerical results demonstrate the strengths and weaknesses of using quadratic approximations.
\end{abstract}

\section{Introduction}
    Derivative-free optimization (DFO) methods are a class of optimization methods that do not use the derivatives of the objective or constraint functions \cite{audet2017derivative,conn2009introduction}.  In recent years, DFO has continued to gain popularity and has demonstrated effective use in solving optimization problems where the derivative information is expensive or impossible to obtain.  Such problems occur regularly in many fields of modern research.  For example, the objective or constraints may be given by computer simulation or laboratory experiments.  A recent survey of the applications of DFO methods can be found in \cite{alarie2021two}, in which the authors review hundreds of applications of DFO methods and particularly focus on energy, materials science, and computational engineering design.   More recently, with the emergence of artificial intelligence, large-scale DFO methods have gained attention as they have great potential to handle problems such as hyperparameter tuning \cite{feurer2019hyperparameter,ghanbari2017black,li2021survey} and generating attacks to deep neural networks \cite{alzantot2019genattack,chen2017zoo}.

    One major class of DFO methods is model-based methods \cite{audet2020model,larson2019derivative,liuzzi2019trust}.  In model-based DFO methods, model functions are constructed iteratively to approximate the objective and constraint functions.  Unfortunately, most of the current model-based DFO methods are inefficient in high dimensions, since both the number of function evaluations and the linear algebra cost required to construct models increase with the problem dimension. 

    One promising technique to handle large optimization problems is through subspaces.  This technique has long been used in gradient-based optimization \cite{conn1994iterated,fukushima1998parallel,grapiglia2013subspace} and more recently been applied in DFO \cite{audet2008parallel,zhang2012derivative}.  Audet et al. \cite{audet2008parallel} apply a parallel space decomposition technique and introduce an asynchronous parallel generalized pattern search algorithm in which each processor solves the optimization problem over only a subset of variables.  In \cite{zhang2012derivative}, Zhang proposes a subspace trust-region method based on NEWUOA \cite{powell2006newuoa,powell2008developments}.  Numerical results demonstrate a significant improvement over NEWUOA on large-scale problems.  More recently, randomization is being incorporated by working in low-dimensional subspaces that are chosen randomly in each iteration \cite{cartis2023scalable,dzahini2022stochastic,hare2023expected,roberts2023direct}.  Roberts and Royer \cite{roberts2023direct} study a direct-search algorithm with search directions chosen in random subspaces.  In \cite{cartis2023scalable} and \cite{dzahini2022stochastic}, random subspace trust-region algorithms are proposed for solving nonlinear least-squares and stochastic optimization problems, respectively.  The theoretical and numerical benefits of random subspaces are confirmed by the reduced per-iteration cost and strong performance of these algorithms.  A theoretical analysis of the connection between the dimension of subspaces and the algorithmic performance can be found in~\cite{hare2023expected}.

    All of the above random subspace trust-region algorithms only construct linear approximation models.  This is because linear functions are easier to handle than quadratic functions in both theoretical analysis and computer programming aspects.  Indeed, recall that a determined linear interpolation model in $\mathbb{R}^n$ requires $(n+1)$ sample points to construct, whereas a determined quadratic interpolation model requires $(n+1)(n+2)\slash 2$ sample points \cite{conn2009introduction}.  This makes the geometry of sample sets, which is a key factor for model quality \cite{conn20081geometry,conn20082geometry}, more difficult to manage in the quadratic case.  Moreover, the only random subspace trust-region algorithm mentioned above developed for deterministic optimization is specialized to nonlinear least-squares problems.  The algorithm proposed in \cite{cartis2023scalable} achieves scalability by exploring the special structure of the objective functions in nonlinear least-squares problems and is therefore not able to handle optimization problems with general objective functions.
    
    This paper proposes a random subspace trust-region algorithm based on quadratic approximations for unconstrained deterministic optimization problems.  This algorithm does not require the objective function to have any specific form (e.g., nonlinear least-squares) and is therefore able to handle a much broader scope of optimization problems.  We provide rigorous theoretical analysis for model accuracy, sample set geometry, and subspace quality.  
    
    The sample set management procedure used in our algorithm is based on the condition number of the direction matrix corresponding to the sample set (see Subsection \ref{subsec:samplesetgeometry} for details).  This is different from the management procedures used in most of the previous algorithms, which are based on the Lagrange polynomials.  We note that our procedure is easier for managing quadratic interpolation sample sets than the procedure based on Lagrange polynomials from \cite{cartis2023scalable}.  This is because the Lagrange polynomial approach requires maximizing quadratic functions over balls.
    
    We prove the almost-sure global convergence of our algorithm and give a complexity bound on the expected number of iterations required to find a sufficiently small gradient.  By numerical experiments, we demonstrate the advantages and disadvantages of quadratic models over linear models and the benefits of exploring the structure of objective functions when possible.  
    
    It is worth mentioning that most of the theoretical results given in this paper are independent of the algorithm.  In particular, our $Q$-fully quadratic modeling technique is easy to handle for both theoretical analysis and implementation purposes and constructs a class of models that is more accurate than required to prove convergence (see Subsection~\ref{subsec:modelAcc} and Section \ref{sec:converg} for details).  Therefore, this paper also provides several theoretical tools for future DFO algorithm design.

    The remainder of this paper is organized as follows.  We end this section with remarks on our notation.  In Section \ref{sec:alg}, we describe the general framework of our algorithm and explain our model construction technique, sample set management procedure, and subspace selection strategy.  In Section \ref{sec:converg}, we provide the convergence analysis of our algorithm.  In Section \ref{sec:numexp}, we design numerical experiments to compare the performance of our algorithm based on quadratic and linear models.  Section \ref{sec:conclusion} concludes our work and gives some suggestions for future work.

\subsection{Notation and background definitions}
    We work in Euclidean space $\mathbb{R}^n$, with inner product $x^\top y=\sum_{i=1}^nx_iy_i$ and induced vector norm $\|x\|=\sqrt{x^\top x}$.  For a matrix $A\in\mathbb{R}^{n\times z}$, we use the induced matrix norm $\|A\|=\max\{\|Ax\|:x\in\mathbb{R}^z, \|x\|=1\}$.  The $ij$-th element of $A$ is denoted by $A_{ij}$. Functions $\lambda_{\min}(\cdot)$ and $\sigma_{\min}(\cdot)$ give the minimum eigenvalue and singular value of a matrix, respectively.  We use $A=[a_1\cdots a_z]$ to denote the column representation of $A$ and we write $a_i\in A$ to mean that vector $a_i$ is a column of $A$.  We use $I_n$ to denote the identity matrix of order $n$.  For $i\in\{1,...,n\}$, we use $e_i^{(n)}\in\mathbb{R}^n$ to denote the $i$-th column of $I_n$, i.e., the $i$-th coordinate vector in $\mathbb{R}^n$.  For a set $S$, we use $|S|$ to denote the cardinality of $S$.  The set $B(x;\Delta)$ is the closed ball centered at $x$ with radius $\Delta$: $$B(x;\Delta)=\left\{y:\left\|y-x\right\|\le\Delta\right\}.$$

    For an invertible square matrix $A\in\mathbb{R}^{n\times n}$, we denote the inverse of $A$ by $A^{-1}$.  Also, we use a generalization of the inverse matrix called the Moore–Penrose Pseudoinverse \cite{penrose1955generalized}.
    \begin{defini}(Moore–Penrose Pseudoinverse)
        For a matrix $A\in\mathbb{R}^{n\times z}$, the Moore–Penrose Pseudoinverse of $A$, denoted by $A^\dagger$, is the unique matrix in $\mathbb{R}^{z\times n}$ satisfying the following four conditions:
        \begin{enumerate}
            \item $AA^\dagger A=A,$
            \item $A^\dagger AA^\dagger=A^\dagger,$
            \item $(AA^\dagger)^\top=AA^\dagger,$
            \item $(A^\dagger A)^\top=A^\dagger A.$
        \end{enumerate}
    \end{defini}
    Note that $A^\dagger$ exists and is unique for any matrix $A\in\mathbb{R}^{n\times z}$ \cite{penrose1955generalized}.  In particular, when $A$ has full rank, $A^\dagger$ can be expressed by the following simple formulae \cite{penrose1955generalized}.
    \begin{enumerate}
        \item If $A\in\mathbb{R}^{n\times z}$ has full column rank $z$, then $A^\dagger$ is a left-inverse of $A$, i.e., $A^\dagger A=I_z$ and $$A^\dagger=(A^\top A)^{-1}A^\top.$$ 
        \item If $A\in\mathbb{R}^{n\times z}$ has full row rank $n$, then $A^\dagger$ is a right-inverse of $A$, i.e., $AA^\dagger=I_n$ and $$A^\dagger=A^\top(AA^\top)^{-1}.$$ 
    \end{enumerate}

    Now we introduce the generalized simplex gradient \cite{hare2020calculus} and the generalized simplex Hessian~\cite{hare2023matrix}.  We note that the generalized simplex Hessian we define below is a special case of the definition given in \cite{hare2023matrix}.
    \begin{defini}(Generalized simplex gradient)
        Let $x^0\in\mathbb{R}^n$ and $D=[d_1\cdots d_z]\in\mathbb{R}^{n\times z}$.  The generalized simplex gradient of $f$ at $x^0$ over $D$, denoted by $\nabla_S f(x^0;D)$, is defined by $$\nabla_S f(x^0;D) = \left(D^\top\right)^\dagger\delta_f(x^0;D),$$ where 
        \begin{equation}\label{eq:GSGdelta}
            \delta_f(x^0;D)=\begin{bmatrix}
            f(x^0 + d_1) - f(x^0)\\
            f(x^0 + d_2) - f(x^0)\\
            \vdots\\
            f(x^0 + d_z) - f(x^0)
        \end{bmatrix}.
        \end{equation}
    \end{defini}

    \begin{defini}(Generalized simplex Hessian)
        Let $x^0\in\mathbb{R}^n$ and $D=[d_1\cdots d_z]\in\mathbb{R}^{n\times z}$.  The generalized simplex Hessian of $f$ at $x^0$ over $D$, denoted by $\nabla_S^2 f(x^0;D)$, is defined by $$\nabla_S^2 f(x^0;D) = \left(D^\top\right)^\dagger\delta_{\nabla_S f}(x^0;D),$$ where 
        \begin{equation}\label{eq:GSHdelta}
            \delta_{\nabla_S f}(x^0;D)=\begin{bmatrix}
            \left(\nabla_S f(x^0 + d_1;D) - \nabla_S f(x^0;D)\right)^\top\\
            \left(\nabla_S f(x^0 + d_2;D) - \nabla_S f(x^0;D)\right)^\top\\
            \vdots\\
            \left(\nabla_S f(x^0 + d_z;D) - \nabla_S f(x^0;D)\right)^\top\\
        \end{bmatrix}.
        \end{equation}
    \end{defini}

\section{Quadratic approximation random subspace trust-region algorithm}\label{sec:alg}
    This section introduces our Quadratic Approximation Random Subspace Trust-region Algorithm ($\myalg$).  We first explain how the interpolation models are constructed in $\myalg$ and analyze their accuracy.  Then we outline the framework of $\myalg$ and discuss some important details.  The convergence of $\myalg$ is proved in Section \ref{sec:converg}.

    Throughout this section, we consider the objective function $f:\mathbb{R}^n\to\mathbb{R}$.

    \subsection{Model construction}\label{subsec:modelconstruct}
    The main idea of $\myalg$ is that, in each iteration, an interpolation model is constructed in a subspace of $\mathbb{R}^n$.  This reduces the number of function evaluations and the computation effort to construct each model.  Moreover, as pointed out in \cite{cartis2023scalable}, since we do not capture the behaviors of the objective function outside our subspaces, this also improves the scalability of our algorithm.

    In the $k$-th iteration of $\myalg$, where the current iterate is $x_k$, we consider a $p$-dimensional affine space defined by 
    \begin{equation*}
        \mathcal{Y}_k=\left\{x_k+D_k\widehat{s}:\widehat{s}\in\mathbb{R}^p\right\},
    \end{equation*}
    where $D_k\in\mathbb{R}^{n\times p}$ has full-column rank.  

    Then, we construct a quadratic model that interpolates $f$ in $\mathcal{Y}_k$.  There are two equivalent ways to interpret this model: either as an underdetermined quadratic interpolation model in $\mathbb{R}^n$ or as a determined quadratic interpolation model in $\mathbb{R}^p$.  In this subsection, we explain how the model is constructed and how to convert between these two viewpoints.

    The underdetermined quadratic model in $\mathbb{R}^n$ is based on the generalized simplex gradient and generalized simplex Hessian.
    \begin{defini}
        Given $D=[d_1\cdots d_p]\in\mathbb{R}^{n\times p}$ with full-column rank, $f:\mathbb{R}^n\to\mathbb{R}$, and $x^0\in\mathbb{R}^n$. The model $m:\mathbb{R}^n\to\mathbb{R}$ is defined by 
        \begin{align*}m(x)=f(x^0)+\left(2\nabla_S f(x^0;D)-\nabla_S f(x^0;2D)\right)^\top\left(x-x^0\right) +\frac{1}{2}\left(x-x^0\right)^\top\nabla_S^2 f(x^0;D)\left(x-x^0\right).
        \end{align*}
    \end{defini}
    \begin{theo}\label{thm:fullspaceunderd}
        The model $m(x)$ is an underdetermined quadratic interpolation model of $f$ in $\mathbb{R}^n$. 
        In particular, $m(x)$ interpolates $f(x)$ on all sample points in 
        \begin{equation*}
            Y = \left\{x^0\right\}\cup\left\{x^0+d_i:i=1,...,p\right\}\cup\left\{x^0+d_i+d_j:i,j=1,...,p\right\}.
        \end{equation*}
    \end{theo}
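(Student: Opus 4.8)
The plan is to exploit the property that, since $D$ has full column rank, $D^\top$ has full row rank and hence $(D^\top)^\dagger$ is a right inverse of $D^\top$, i.e. $D^\top(D^\top)^\dagger=I_p$. Applying this to the definition of the generalized simplex gradient gives $D^\top\nabla_S f(x^0;D)=\delta_f(x^0;D)$, so reading off the $i$-th row yields the directional interpolation identity
\[
    d_i^\top\nabla_S f(x^0;D)=f(x^0+d_i)-f(x^0).
\]
This identity is valid for any base point, so replacing $x^0$ by $x^0+d_i$ and the direction by $d_j$ gives $d_j^\top\nabla_S f(x^0+d_i;D)=f(x^0+d_i+d_j)-f(x^0+d_i)$. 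The same argument applied to $2D$, whose columns are $2d_i$, produces $d_i^\top\nabla_S f(x^0;2D)=\tfrac12\bigl(f(x^0+2d_i)-f(x^0)\bigr)$.

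First I would establish the analogous identity for the generalized simplex Hessian. The same right-inverse argument gives $D^\top\nabla_S^2 f(x^0;D)=\delta_{\nabla_S f}(x^0;D)$, whose $i$-th row is $\bigl(\nabla_S f(x^0+d_i;D)-\nabla_S f(x^0;D)\bigr)^\top$. Multiplying on the right by $d_j$ and substituting the gradient identities above collapses the expression into a second-order finite difference:
\[
    d_i^\top\nabla_S^2 f(x^0;D)\,d_j=f(x^0+d_i+d_j)-f(x^0+d_i)-f(x^0+d_j)+f(x^0)=:\Delta_{ij}.
\]
The key observation is that, although $\nabla_S^2 f(x^0;D)$ need not be symmetric as a matrix, the quadratic form it induces on the columns of $D$ is symmetric, since $\Delta_{ij}=\Delta_{ji}$ is manifest.

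With these identities in hand, interpolation reduces to bookkeeping over the three families of points in $Y$. At $x^0$ the linear and quadratic terms vanish, so $m(x^0)=f(x^0)$. At $x^0+d_i$ I would set $x-x^0=d_i$ and replace $d_i^\top\nabla_S f(x^0;D)$, $d_i^\top\nabla_S f(x^0;2D)$, and $d_i^\top\nabla_S^2 f(x^0;D)\,d_i=\Delta_{ii}$ by their finite-difference values; the $f(x^0+2d_i)$ contributions cancel between the linear and quadratic parts and the remainder collapses to $f(x^0+d_i)$. I expect the mixed family $x^0+d_i+d_j$ to be the main obstacle: here I would expand $(d_i+d_j)^\top\nabla_S^2 f(x^0;D)(d_i+d_j)=\Delta_{ii}+2\Delta_{ij}+\Delta_{jj}$ (using precisely the symmetry of $\Delta$ noted above) together with the additive split of the linear term, and then track the cancellations. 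Only $\Delta_{ij}$ carries the term $f(x^0+d_i+d_j)$, and its coefficient works out to $1$, while all remaining terms cancel, giving $m(x^0+d_i+d_j)=f(x^0+d_i+d_j)$; taking $i=j$ recovers the point $x^0+2d_i$ as a consistency check. Finally, since $m$ is a quadratic on $\mathbb{R}^n$ while the $1+p+\binom{p+1}{2}$ interpolation conditions from $Y$ fall short of the $\binom{n+2}{2}$ coefficients of a general $n$-dimensional quadratic whenever $p<n$, the interpolation problem is underdetermined, as claimed.
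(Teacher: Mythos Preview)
Your argument is correct and follows essentially the same route as the paper: both exploit the right-inverse identity $D^\top(D^\top)^\dagger=I_p$ to reduce the simplex gradient and Hessian to finite differences, and then verify interpolation on the three families of points in $Y$ by direct substitution. One small but genuine improvement in your version is that you do not invoke symmetry of $\nabla_S^2 f(x^0;D)$ as a matrix (which the paper imports from an external reference) but instead observe directly that $d_i^\top\nabla_S^2 f(x^0;D)\,d_j=\Delta_{ij}$ is manifestly symmetric in $i,j$, which is all that is needed for the quadratic-form expansion; this makes the proof self-contained.
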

    \begin{proof}
        We only need to show that $m(x)=f(x)$ for all $x$ in the set $Y$.
        
        {\bf Case \rom{1}:} Suppose $x=x^0$.  Then we have
        \begin{equation*}
            m(x^0)=f(x^0)+\left(2\nabla_S f(x^0;D)-\nabla_S f(x^0;2D)\right)^\top\mathbf{0} +\frac{1}{2}\mathbf{0}^\top\nabla_S^2 f(x^0;D)\mathbf{0}=f(x^0).
        \end{equation*}

        {\bf Case \rom{2}:} Suppose $x=x^0+d_i$ where $i\in\{1,...,p\}$. Since $D$ has full-column rank, we have $D^\dagger D=I_p$. Therefore,
        \begin{equation*}
            \nabla_S f(x^0;D)^\top D = \delta_f(x^0;D)^\top D^\dagger D=\delta_f(x^0;D)^\top,
        \end{equation*}
        \begin{equation*}
            \nabla_S f(x^0;2D)^\top D = \delta_f(x^0;2D)^\top \left(2D\right)^\dagger D=\frac{1}{2}\delta_f(x^0;2D)^\top,
        \end{equation*}
        and
        \begin{equation*}
            \nabla_S f(x^0+d_i;D)^\top D = \delta_f(x^0+d_i;D)^\top D^\dagger D=\delta_f(x^0+d_i;D)^\top,
        \end{equation*}
        for all $i\in\{1,...,p\}$.
        
        Based on these, we also have
        \begin{equation*}
            D^\top\nabla_S^2 f(x^0;D)D= D^\top\left(D^\top\right)^\dagger\delta_{\nabla_S f}(x^0;D)D = \delta_{\nabla_S f}(x^0;D)D = \delta_{\delta_f}(x^0;D),
        \end{equation*}
        where
        \begin{equation}\label{eq:deltaofdeltas}
            \delta_{\delta_f}(x^0;D)=\begin{bmatrix}
                \left(\delta_f(x^0+d_1;D) - \delta_f(x^0;D)\right)^\top\\
                \left(\delta_f(x^0+d_2;D) - \delta_f(x^0;D)\right)^\top\\
                \vdots\\
                \left(\delta_f(x^0+d_p;D) - \delta_f(x^0;D)\right)^\top
            \end{bmatrix}.
        \end{equation}
        Notice that for all $i\in\{1,...,p\}$, the $i$-th diagonal element of $\delta_{\delta_f}(x^0;D)$ is $$f(x^0+2d_i)-2f(x^0+d_i)+f(x^0).$$ Hence, we obtain
        \begin{align*}
            &~~m(x^0+d_i)\\
            &= f(x^0)+\left(2\nabla_S f(x^0;D)-\nabla_S f(x^0;2D)\right)^\top d_i+\frac{1}{2}d_i^\top\nabla_S^2 f(x^0;D)d_i\\
            &= f(x^0)+2f(x^0+d_i)-2f(x^0)-\frac{1}{2}f(x^0+2d_i)+\frac{1}{2}f(x^0) +\frac{1}{2}\left(f(x^0+2d_i)-2f(x^0+d_i)+f(x^0)\right)\\
            &= f(x^0+d_i). \numberthis\label{eq:fullspaceunderdcase2}
        \end{align*}

        {\bf Case \rom{3}:} Suppose $x=x^0+d_i+d_j$ where $i,j\in\{1,...,p\}$.  Notice that for all $i,j\in\{1,...,p\}$, the $ij$-th element of $\delta_{\delta_f}(x^0;D)$ is $$f(x^0+d_i+d_j)-f(x^0+d_i)-f(x^0+d_j)+f(x^0).$$  Using \eqref{eq:fullspaceunderdcase2}, and noticing that $\nabla_S^2 f(x^0;D)$ is symmetric (\cite[Proposition 4.4]{hare2023matrix}), we have
        \begin{align*}
            &~~m(x^0+d_i+d_j)\\
            &= f(x^0)+\left(2\nabla_S f(x^0;D)-\nabla_S f(x^0;2D)\right)^\top \left(d_i+d_j\right) +\frac{1}{2}\left(d_i+d_j\right)^\top \nabla_S^2 f(x^0;D)\left(d_i+d_j\right)\\
            &= m(x^0+d_i) + m(x^0+d_j) + d_i^\top \nabla_S^2 f(x^0;D)d_j - f(x^0)\\
            &= f(x^0+d_i) + f(x^0+d_j) + f(x^0+d_i+d_j)-f(x^0+d_i)-f(x^0+d_j)\\
            &= f(x^0+d_i+d_j).
        \end{align*}

        $\hfill\qed$
    \end{proof}

    Notice that all the sample points needed to construct the model $m(x)$ are contained in the affine space $\mathcal{Y}=\{x^0+D\widehat{s}:\widehat{s}\in\mathbb{R}^p\}$.  Hence, we can construct a model that interpolates $f$ on the image of the points in $\mathbb{R}^p$.  Recall that the $QR$-factorization of a matrix with full-column rank gives an orthonormal basis for its range, determined by $Q$, and the coordinates of its columns under this basis, determined by $R$.  A determined quadratic model can be constructed in the subspace determined by $R$.
    \begin{defini}
        Given $D\in\mathbb{R}^{n\times p}$ with full-column rank, $f:\mathbb{R}^n\to\mathbb{R}$, and $x^0\in\mathbb{R}^n$.  Suppose the $QR$-factorization of $D$ is $D=QR$, where $Q\in\mathbb{R}^{n\times p}$ and $R=[r_1\cdots r_p]\in\mathbb{R}^{p\times p}$.  Let $\widehat{f}(\widehat{s})=f(x^0+Q\widehat{s})$.  The model $\widehat{m}:\mathbb{R}^p\to\mathbb{R}$ is defined by 
        \begin{equation*}
            \widehat{m}(\widehat{s})=\widehat{f}(\mathbf{0})+\left(2\nabla_S \widehat{f}(\mathbf{0};R)-\nabla_S \widehat{f}(\mathbf{0};2R)\right)^\top\widehat{s} + \frac{1}{2}\widehat{s}^\top\nabla_S^2 \widehat{f}(\mathbf{0};R)\widehat{s}.
        \end{equation*}
    \end{defini}
    \begin{theo}\label{thm:subspaced}
        The model $\widehat{m}(\widehat{s})$ is a determined quadratic interpolation model of $\widehat{f}$ in $\mathbb{R}^p$.  In particular, $\widehat{m}(\widehat{s})$ interpolates $\widehat{f}(\widehat{s})$ on all sample points in 
        \begin{equation*}
            \widehat{Y} = \left\{\mathbf{0}\right\}\cup\left\{r_i:i=1,...,p\right\}\cup\left\{r_i+r_j:i,j=1,...,p\right\}.
        \end{equation*}
    \end{theo}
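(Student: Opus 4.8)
The plan is to recognize that Theorem~\ref{thm:subspaced} is nothing more than Theorem~\ref{thm:fullspaceunderd} applied to the reduced function $\widehat{f}:\mathbb{R}^p\to\mathbb{R}$ in place of $f:\mathbb{R}^n\to\mathbb{R}$, with the base point $\mathbf{0}\in\mathbb{R}^p$ playing the role of $x^0$ and the square matrix $R\in\mathbb{R}^{p\times p}$ playing the role of the direction matrix $D$. Comparing the two model definitions, $\widehat{m}(\widehat{s})$ has exactly the form of the earlier model $m(x)$ under the substitutions $f\mapsto\widehat{f}$, $x^0\mapsto\mathbf{0}$, $D\mapsto R$, and $x\mapsto\widehat{s}$. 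Hence, once the hypotheses of Theorem~\ref{thm:fullspaceunderd} are verified for this instance, the interpolation claim follows immediately.

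First I would check the single hypothesis required, namely that $R$ has full-column rank. Since $D=QR$ has full-column rank $p$ and $Q$ has orthonormal columns, $R$ is an invertible $p\times p$ matrix and therefore has full-column rank. With this, Theorem~\ref{thm:fullspaceunderd} applies verbatim to $\widehat{f}$, $\mathbf{0}$, and $R$, and tells us that $\widehat{m}$ interpolates $\widehat{f}$ on $\{\mathbf{0}\}\cup\{r_i:i=1,\dots,p\}\cup\{r_i+r_j:i,j=1,\dots,p\}=\widehat{Y}$. This settles the interpolation part of the statement with no new computation.

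It remains to justify the word \emph{determined}, for which I would combine a dimension count with a poisedness argument. Because vector addition is commutative, the set $\{r_i+r_j:i,j=1,\dots,p\}$ consists of the $p$ distinct ``diagonal'' points $2r_i$ together with the $\binom{p}{2}$ distinct ``off-diagonal'' points $r_i+r_j$ for $i<j$, giving $p(p+1)/2$ points; adding $\mathbf{0}$ and the $p$ points $r_i$ yields $|\widehat{Y}|=1+p+p(p+1)/2=(p+1)(p+2)/2$ distinct points, which equals the dimension of the space of quadratics on $\mathbb{R}^p$. To upgrade this matching count to genuine uniqueness, I would observe that $\widehat{Y}$ is the image under the invertible linear map $R$ of the canonical set $\{\mathbf{0}\}\cup\{e_i^{(p)}:i=1,\dots,p\}\cup\{e_i^{(p)}+e_j^{(p)}:i,j=1,\dots,p\}$, which is a standard poised set for quadratic interpolation; since an invertible linear transformation preserves poisedness, $\widehat{Y}$ is poised, and hence $\widehat{m}$ is the unique quadratic interpolating $\widehat{f}$ on $\widehat{Y}$, i.e., a determined model.

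The routine steps here are the substitution into Theorem~\ref{thm:fullspaceunderd} and the counting. The main obstacle is conceptual rather than computational: one must be careful that the ``underdetermined'' conclusion of Theorem~\ref{thm:fullspaceunderd} turns into ``determined'' precisely because the number of directions now equals the ambient dimension $p$, and one must confirm the poisedness of $\widehat{Y}$ through the invertibility of $R$, so that the matching point count truly yields a unique interpolating quadratic rather than merely a consistent square system.
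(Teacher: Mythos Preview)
Your proposal is correct and follows essentially the same approach as the paper: the paper also observes that $R$ is invertible (hence the $(p+1)(p+2)/2$ points in $\widehat{Y}$ are distinct) and then states that the interpolation argument is analogous to that of Theorem~\ref{thm:fullspaceunderd}. Your direct invocation of Theorem~\ref{thm:fullspaceunderd} under the substitution $(f,x^0,D)\mapsto(\widehat{f},\mathbf{0},R)$ is a clean way to phrase this, and your additional poisedness argument for uniqueness goes slightly beyond what the paper spells out, but is consistent with it.
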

    \begin{proof}
        Since $D$ has full-column rank, we have that $R$ is invertible and the $(p+1)(p+2)/2$ points in the set $\widehat{Y}$ are distinct.  Therefore, we only need to show that $\widehat{m}(\widehat{s})=\widehat{f}(\widehat{s})$ for all $\widehat{s}$ in the set $\widehat{Y}$.  The rest of the proof is analogous to the proof of Theorem \ref{thm:fullspaceunderd}.

        $\hfill\qed$
    \end{proof}

    \begin{rema}
        We note that the term $$2\nabla_S \widehat{f}(\mathbf{0};R)-\nabla_S \widehat{f}(\mathbf{0};2R)$$ in $\widehat{m}(\widehat{s})$ is the adapted centred simplex gradient \cite{chen2023adapting} of $\widehat{f}$ over $\mathbb{Y}=\mathbb{Y}^+\cup\widetilde{\mathbb{Y}}$, where $\mathbb{Y}^+=\{\mathbf{0}\}\cup\{r_i:i=1,...,p\}$ and $\widetilde{\mathbb{Y}}=\{\mathbf{0}\}\cup\{2r_i:i=1,...,p\}$.  Detailed explanations of the notation and more properties of the adapted centred simplex gradient can be found in \cite{chen2023adapting}.
    \end{rema}

    As we noted at the beginning of this subsection, the underdetermined model in $\mathbb{R}^n$ and the determined model in $\mathbb{R}^p$ are equivalent.  The following theorem shows how to convert between them. 
    \begin{theo}\label{thm:modelsrelation}
        Suppose matrix $D=[d_1\cdots d_p]\in\mathbb{R}^{n\times p}$ has full-column rank and let $x^0\in\mathbb{R}^n$.  Suppose the $QR$-factorization of $D$ is $D=QR$, where $Q\in\mathbb{R}^{n\times p}$ and $R=[r_1\cdots r_p]\in\mathbb{R}^{p\times p}$.  Let $\widehat{f}(\widehat{s})=f(x^0+Q\widehat{s})$.  Then $m(x)$ and $\widehat{m}(\widehat{s})$ have the following relation 
        \begin{equation}\label{eq:relation1}
            \widehat{m}(\widehat{s})=m(x^0+Q\widehat{s}).
        \end{equation}
        Moreover, for all $x\in\mathcal{Y}=\{x^0+D\widehat{s}:\widehat{s}\in\mathbb{R}^p\}$, we have 
        \begin{equation}\label{eq:relation2}
            m(x)=\widehat{m}(Q^\top(x-x^0)).
        \end{equation}
    \end{theo}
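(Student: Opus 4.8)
The plan is to prove \eqref{eq:relation1} directly by substituting $x = x^0 + Q\widehat{s}$ into the definition of $m$ and matching the result term-by-term against the definition of $\widehat{m}$; equation \eqref{eq:relation2} then follows as a short corollary. The constant terms agree immediately, since $\widehat{f}(\mathbf{0}) = f(x^0 + Q\mathbf{0}) = f(x^0)$, so the real work is to show that the linear and quadratic coefficients transform correctly under the lift $\widehat{s} \mapsto x^0 + Q\widehat{s}$.

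First I would record the pseudoinverse identity that drives everything. Because $D = QR$ has full-column rank, $R$ is invertible and $Q^\top Q = I_p$, so $D^\top D = R^\top R$ and hence $(D^\top)^\dagger = D(D^\top D)^{-1} = QR(R^\top R)^{-1} = Q(R^\top)^{-1} = Q(R^\top)^\dagger$. Next I would exploit that $d_i = Q r_i$: the definition $\widehat{f}(\widehat{s}) = f(x^0 + Q\widehat{s})$ gives $\widehat{f}(r_i) = f(x^0 + d_i)$, $\widehat{f}(2r_i) = f(x^0 + 2d_i)$, and $\widehat{f}(r_i + r_j) = f(x^0 + d_i + d_j)$, so the difference vectors coincide exactly: $\delta_{\widehat{f}}(\mathbf{0};R) = \delta_f(x^0;D)$, $\delta_{\widehat{f}}(\mathbf{0};2R) = \delta_f(x^0;2D)$, and $\delta_{\widehat{f}}(r_i;R) = \delta_f(x^0 + d_i;D)$. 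Combining these two observations yields the key lifting relations $\nabla_S f(x^0;D) = Q\nabla_S \widehat{f}(\mathbf{0};R)$, $\nabla_S f(x^0;2D) = Q\nabla_S \widehat{f}(\mathbf{0};2R)$, and, for each $i$, $\nabla_S f(x^0 + d_i;D) = Q\nabla_S \widehat{f}(r_i;R)$.

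The main technical step is the quadratic coefficient, and I expect this to be the chief obstacle, because it is a matrix identity involving two nested levels of the generalized simplex construction rather than a single vector relation. Subtracting the lifting relations for $\nabla_S f(x^0 + d_i;D)$ and $\nabla_S f(x^0;D)$ shows that each row of $\delta_{\nabla_S f}(x^0;D)$ equals the corresponding row of $\delta_{\nabla_S \widehat{f}}(\mathbf{0};R)$ multiplied on the right by $Q^\top$; that is, $\delta_{\nabla_S f}(x^0;D) = \delta_{\nabla_S \widehat{f}}(\mathbf{0};R)\,Q^\top$. Applying $(D^\top)^\dagger = Q(R^\top)^\dagger$ on the left then gives $\nabla_S^2 f(x^0;D) = Q\,\nabla_S^2 \widehat{f}(\mathbf{0};R)\,Q^\top$, whence $Q^\top \nabla_S^2 f(x^0;D)\, Q = \nabla_S^2 \widehat{f}(\mathbf{0};R)$ by $Q^\top Q = I_p$.

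With these relations in hand, I would substitute $x - x^0 = Q\widehat{s}$ into $m$: the linear term becomes $(2\nabla_S \widehat{f}(\mathbf{0};R) - \nabla_S \widehat{f}(\mathbf{0};2R))^\top Q^\top Q\,\widehat{s}$ and the quadratic term becomes $\tfrac{1}{2}\widehat{s}^\top Q^\top \nabla_S^2 f(x^0;D)\, Q\,\widehat{s}$; applying $Q^\top Q = I_p$ together with the Hessian identity collapses both to exactly the corresponding terms of $\widehat{m}(\widehat{s})$, proving \eqref{eq:relation1}. Finally, for \eqref{eq:relation2} I would note that any $x \in \mathcal{Y}$ satisfies $x - x^0 = D\widehat{s}' = QR\widehat{s}'$ for some $\widehat{s}'$, so $x - x^0$ lies in the range of $Q$ and $QQ^\top(x - x^0) = x - x^0$; setting $\widehat{s} = Q^\top(x - x^0)$ gives $x^0 + Q\widehat{s} = x$, and \eqref{eq:relation1} immediately yields $\widehat{m}(Q^\top(x - x^0)) = m(x)$.
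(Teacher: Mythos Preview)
Your proposal is correct and follows essentially the same approach as the paper: both arguments establish that the $\delta$-vectors for $f$ and $\widehat f$ coincide, compute the pseudoinverse via the $QR$-factorization, derive the resulting relations between the simplex gradients and Hessians, and then substitute $x=x^0+Q\widehat s$ into $m$ to match terms. The only cosmetic difference is that you phrase the key identities as \emph{lifting} relations, e.g.\ $\nabla_S f(x^0;D)=Q\,\nabla_S\widehat f(\mathbf{0};R)$ and $\nabla_S^2 f(x^0;D)=Q\,\nabla_S^2\widehat f(\mathbf{0};R)\,Q^\top$, obtained from $(D^\top)^\dagger=Q(R^\top)^{-1}$, whereas the paper phrases them as \emph{projection} relations, e.g.\ $Q^\top\nabla_S f(x^0;D)=\nabla_S\widehat f(\mathbf{0};R)$ and $Q^\top\nabla_S^2 f(x^0;D)\,Q=\nabla_S^2\widehat f(\mathbf{0};R)$, obtained from $D^\dagger Q=R^{-1}$; since $\nabla_S f(x^0;D)$ lies in the range of $Q$, the two formulations are equivalent and lead to the same substitution step.
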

    \begin{proof}
        Notice that $d_i=Qr_i$ for all $i\in\{1,...,p\}$, therefore we have 
        \begin{align*}
            f(x^0+d_i)-f(x^0)&=\widehat{f}(r_i)-\widehat{f}(\mathbf{0}),\\
            f(x^0+2d_i)-f(x^0)&=\widehat{f}(2r_i)-\widehat{f}(\mathbf{0}),\\
            f(x^0+d_i+d_j)-f(x^0+d_i)&=\widehat{f}(r_i+r_j)-\widehat{f}(r_i),
        \end{align*}
        for all $i,j\in\{1,...,p\}$.  These imply that
        \begin{align*}
            \delta_f(x^0;D) =\delta_{\widehat{f}}(\mathbf{0};R),~~~\delta_f(x^0;2D) =\delta_{\widehat{f}}(\mathbf{0};2R),~~~\delta_f(x^0+d_i;D) =\delta_{\widehat{f}}(r_i;R),
        \end{align*}
        for all $i\in\{1,...,p\}$.
        
        Since $D=QR$ has full-column rank, we have $$D^\dagger=(D^\top D)^{-1}D^\top=(R^\top Q^\top QR)^{-1}R^\top Q^\top=(R^\top R)^{-1}R^\top Q^\top=R^{-1}Q^\top,$$ which gives $D^\dagger Q=R^{-1}$.  Hence, we have
        \begin{equation*}
            \nabla_S f(x^0;D)^\top Q = \delta_f(x^0;D)^\top D^\dagger Q = \delta_{\widehat{f}}(\mathbf{0};R)^\top R^{-1} = \nabla_S \widehat{f}(\mathbf{0};R)^\top,
        \end{equation*}
        \begin{equation*}
            \nabla_S f(x^0;2D)^\top Q= \delta_f(x^0;2D)^\top \left(2D\right)^\dagger Q= \delta_{\widehat{f}}(\mathbf{0};2R)^\top \left(2R\right)^{-1} = \nabla_S \widehat{f}(\mathbf{0};2R)^\top,
        \end{equation*}
        \begin{equation*}
            \nabla_S f(x^0+d_i;D)^\top Q = \delta_f(x^0+d_i;D)^\top D^\dagger Q = \delta_{\widehat{f}}(r_i;R)^\top R^{-1} = \nabla_S \widehat{f}(r_i;R)^\top,
        \end{equation*}
        for all $i\in\{1,...,p\}$.
        
        Based on these, we also have
        \begin{equation*}
            \delta_{\nabla_S f}(x^0;D)Q = 
            \begin{bmatrix}
            \left(\nabla_S \widehat{f}(r_1;R) - \nabla_S \widehat{f}(\mathbf{0};R)\right)^\top\\
            \left(\nabla_S \widehat{f}(r_2;R) - \nabla_S \widehat{f}(\mathbf{0};R)\right)^\top\\
            \vdots\\
            \left(\nabla_S \widehat{f}(r_p;R) - \nabla_S \widehat{f}(\mathbf{0};R)\right)^\top\\
            \end{bmatrix} 
            = \delta_{\nabla_S \widehat{f}}(\mathbf{0};R).
        \end{equation*}
        Therefore,
        \begin{equation*}
            Q^\top\nabla_S^2 f(x^0;D)Q= \left(D^\dagger Q\right)^\top\delta_{\nabla_S f}(x^0;D)Q =\nabla_S^2 \widehat{f}(\mathbf{0};R).
        \end{equation*}
        
        Let $x=x^0+Q\widehat{s}$.  Examining $m(x)$, we obtain
        \begin{align*}
            m(x)&=m(x^0+Q\widehat{s})\\
            &=f(x^0)+\left(2\nabla_S f(x^0;D)-\nabla_S f(x^0;2D)\right)^\top Q\widehat{s} +\frac{1}{2}\widehat{s}^\top Q^\top\nabla_S^2 f(x^0;D)Q\widehat{s}\\
            &=\widehat{f}(\mathbf{0})+\left(2\nabla_S \widehat{f}(\mathbf{0};R)-\nabla_S \widehat{f}(\mathbf{0};2R)\right)^\top\widehat{s} +\frac{1}{2}\widehat{s}^\top \nabla_S^2 \widehat{f}(\mathbf{0};R)\widehat{s}\\
            &=\widehat{m}(\widehat{s}),
        \end{align*}
        which gives the relation in Equation \eqref{eq:relation1}.

        For the second relation, notice that for all $x\in\mathcal{Y}$, there exists $\widehat{s}\in\mathbb{R}^p$ such that $x=x^0+Q\widehat{s}$, or equivalently, $\widehat{s}=Q^\top(x-x^0)$.  The second relation \eqref{eq:relation2} follows from applying this change of notation to relation \eqref{eq:relation1}. 

        $\hfill\qed$
    \end{proof}

    \subsection{Complete algorithm}
    In this subsection, we give the complete $\myalg$ algorithm which consists of the main algorithm and three subroutines.
    
    \begin{algorithm}[H]
    \caption{Main algorithm}
    \DontPrintSemicolon
        \KwIn{\begin{tabular}{lcl}
            $f$ & $f:\mathbb{R}^n\to\mathbb{R}$ & objective function\\
            $p_{\mathrm{rand}}$, $p$ & $1 \le p_{\mathrm{rand}} \le p \le n$ & minimum randomized and full subspace dimension\\
            $x_0$, $\Delta_0$ & $x_0\in\mathbb{R}^n, \Delta_0 > 0$ & starting point and initial trust-region radius\\
            $\Delta_{\min}$, $\Delta_{\max}$ &  $0 < \Delta_{\min} \le \Delta_{\max}$ & minimum and maximum trust-region radii\\
            $\gamma_{\mathrm{dec}}$, $\gamma_{\mathrm{inc}}$ & $0 < \gamma_{\mathrm{dec}} < 1 < \gamma_{\mathrm{inc}}$ & trust-region radius updating parameters\\
            $\eta_1$, $\eta_2$ & $0 < \eta_1 \le \eta_2 < 1$ & acceptance thresholds\\
            $\mu$ & $\mu>0$ & criticality constant\\
            $\epsilon_{\mathrm{rad}}$, $\epsilon_{\mathrm{geo}}$ & $\epsilon_{\mathrm{rad}}\ge 1$, $\epsilon_{\mathrm{geo}}>0$ & sample set radius and geometry tolerance\\
            $M_{A}$ & $M_{A}\ge 1$ & normal distribution matrix upper bound
        \end{tabular}}
        {\bf Initialize:} Generate random mutually orthogonal directions $d_1,..., d_p \in \mathbb{R}^n$ with length $\Delta_0$ by Algorithm \ref{alg:dirngen}; define initial direction matrix $D_0=[d_1\cdots d_p]$.
        
        \For{$k=0,1,...$}
        { 
            Perform the $QR$-factorization $D_k=Q_kR_k$ and construct model $\widehat{m}_k$ at $x_k$.
            
            \uIf{$\mu\left\|\nabla\widehat{m}_k(\mathbf{0})\right\|<\Delta_k$ \tcp*[f]{Criticality test}}
            {
                Set $\Delta_{k+1}=\gamma_{\mathrm{dec}}\Delta_k,x_{k+1}=x_k$ and $D_{k+1}=\gamma_{\mathrm{dec}}D_k$.
            }
            \Else(\tcp*[f]{Trust-region subproblem and update})
            {
                Solve (approximately) $$\min\limits_{\widehat{s}_k\in\mathbb{R}^p}\widehat{m}_k(\widehat{s}_k)~~~~s.t.~\left\|\widehat{s}_k\right\|\le\Delta_k$$ and calculate the step $s_k = Q_k\widehat{s}_k\in\mathbb{R}^n$.
                        
                Calculate $$\rho_k=\frac{f(x_k)-f(x_k+s_k)}{\widehat{m}_k(\mathbf{0})-\widehat{m}_k(\widehat{s}_k)}$$ and update the trust-region radius
                \begin{equation*}
                  \Delta_{k+1}=
                    \begin{cases}
                      \gamma_{\mathrm{dec}}\Delta_k, & \text{if }\rho_k<\eta_1,\\
                      \min(\gamma_{\mathrm{inc}}\Delta_k, \Delta_{\max}), &\text{if } \rho_k>\eta_2  \text{ and } \left\|\widehat{s}_k\right\|\ge 0.95\Delta_k,\\
                      \Delta_k, & \text{otherwise}.
                    \end{cases}       
                \end{equation*}

                Let $x_{k+1}$ be any point such that
                \begin{equation}\label{eq:nextiterate}
                    f(x_{k+1})\le\min\left\{\{f(x_k+s_k)\}\cup\{f(x_k+d_i+d_j):d_i,d_j\in [\mathbf{0}~D_k]\}\right\}.
                \end{equation}
                
                Create $D^U_{k+1}$ according to Algorithm \ref{alg:mainrm} and denote the columns of $D^U_{k+1}$ by $d^U_1, ..., d^U_{p_1}$ where $0\le p_1\le p$.
                
                Generate $q=p-p_1$ random mutually orthogonal directions $d^R_1,...,d^R_q$ with length $\Delta_{k+1}$ that are orthogonal to all columns in $D^U_{k+1}$ using Algorithm \ref{alg:dirngen}.
                
                Set $D_{k+1}=[d^U_1\cdots d^U_{p_1}~d^R_1\cdots d^R_q]$.
            }
            \lIf{$\Delta_{k+1}<\Delta_{\min}$,}{\textbf{stop}. \tcp*[f]{Stopping test}}
        }
    \end{algorithm}
    
    \begin{algorithm}[H]\caption{Creating $D^U_{k+1}$ algorithm}\label{alg:mainrm}
    \DontPrintSemicolon
        \KwIn{$D_k$; $x_k$; $x_{k+1}$; $s_k$; $\Delta_{k+1}$; $p$; $p_{\mathrm{rand}}$; $\epsilon_{\mathrm{rad}}$; $\epsilon_{\mathrm{geo}}$.}
        Select $p$ directions from $\{\{x_k+s_k-x_{k+1}\}\cup\{x_k+d_i+d_j-x_{k+1}:d_i,d_j\in [\mathbf{0}~D_k]\}\}$ as the columns of $D^U_{k+1}$.
        
        Remove $p_{\mathrm{rand}}$ directions from $D^U_{k+1}$ using Algorithm \ref{alg:ptrm}.
        
        Remove all directions $d\in D^U_{k+1}$ with $\|d\|>\epsilon_{\mathrm{rad}}\Delta_{k+1}$.
        
        \While{$D^U_{k+1}\text{ is not empty and }\sigma_{\min}(D^U_{k+1})<\epsilon_{\mathrm{geo}}$}
        {
            Remove 1 direction from $D^U_{k+1}$ using Algorithm \ref{alg:ptrm}.
        }
    \end{algorithm}
    
    \begin{algorithm}[H]\caption{Direction removing algorithm}\label{alg:ptrm}
    \DontPrintSemicolon
        \KwIn{$D^U_{k+1}$; $\Delta_{k+1}$; number of directions to be removed $p_{\mathrm{rm}}$.}
        Set the number of directions removed {\tt removed} = 0.
        
        \While{${\tt removed}<p_{\mathrm{rm}}$}
        {
            Denote $D^U_{k+1}=[d^U_1\cdots d^U_m]$.
            
            \For{$i=1,...,m$}
            {
                Define matrix $M_i=\left[d^U_1 \cdots d^U_{i-1} ~d^U_{i+1} \cdots d^U_m\right]$ and compute $$\theta_i=\sigma_{\min}(M_i)\cdot\max\left(\frac{\left\|d^U_i\right\|^4}{\Delta^4_{k+1}},1\right).$$
            }
            Remove the direction with the largest $\theta_i$ from $D^U_{k+1}$, set ${\tt removed}={\tt removed}+1$.
        }
    \end{algorithm}
    
    \begin{algorithm}[H]\caption{Direction generating algorithm (modified from \cite[Algorithm 5]{cartis2023scalable})}\label{alg:dirngen}
    \DontPrintSemicolon
        \KwIn{Orthonormal basis for current subspace $Q$ (optional); number of new directions $q$; length of new directions $\Delta$; $M_{A}$.}
        Generate $A\in\mathbb{R}^{n\times q}$ with $A_{ij}\sim\mathcal{N}(0,1\slash q)$ such that $A$ has full-column rank and $\|A\|\le M_{A}$.
        
        If $Q$ is specified, then calculate $\widetilde{A}=A-QQ^\top A$, otherwise set $\widetilde{A}=A$.
        
        Perform the $QR$-factorization $\widetilde{A}=\widetilde{Q}\widetilde{R}$ and denote $\widetilde{Q}=[\widetilde{q}_1\cdots \widetilde{q}_q]$.
        
        Return $\Delta\widetilde{q}_1,...,\Delta\widetilde{q}_q$.
    \end{algorithm}

    We note that the trust-region subproblem is solved approximately by an implementation in~\cite{cartis2023scalable} of the routine $\mathrm{TRSBOX}$ from $\mathrm{BOBYQA}$ \cite{powell2009bobyqa}.  The way we set the next iterate (inequality~\eqref{eq:nextiterate}) gives some flexibility to $\myalg$ by allowing it to use other optimization algorithms (e.g., heuristics) to improve efficiency without breaking convergence.  We also note that, unlike some other model-based trust-region algorithms (e.g., \cite[Algorithm 11.1]{audet2017derivative}), the stopping condition of $\myalg$ does not check if the model gradient is small enough.  This is because, in $\myalg$, a small model gradient implies a small true gradient only if the current subspace has a certain good quality.  As we will explain in Subsection \ref{subsec:subspaceQual}, the subspaces in $\myalg$ have this good quality with a certain probability and therefore model gradients do not always provide sufficient information on true gradients.
    

    \subsection{Sample set geometry}\label{subsec:samplesetgeometry}
    In this subsection, we examine the sample set management procedure in $\myalg$ (Algorithms~\ref{alg:mainrm} to \ref{alg:dirngen}).  
    
    In each iteration of $\myalg$, we replace at least $p_{\mathrm{rand}}$ directions to construct the next direction matrix.  Our criteria for dropping and adding directions are based on the idea that $\|D_k^\dagger\|$ should be as small as possible.  We will see in Subsection \ref{subsec:modelAcc} (Theorem \ref{thm:fullyquadconsts}) that $\|D_k^\dagger\|$ is directly related to error bounds and minimizing it improves our bounds on model accuracy.
    
    Our management procedure is such that $\|D_k^\dagger\|=1\slash\sigma_{\min}(D_k)$ is minimized, i.e., $\sigma_{\min}(D_k)$ is maximized.  The following theorem shows the relation between $\sigma_{\min}(D_k)$ and the columns of $D_k$.
    \begin{theo}\label{thm:orthisthebest}
        Let $\widetilde{D}=[d_1\cdots d_{q-1}]\in\mathbb{R}^{n\times (q-1)}$ where $2\le q\le n$. Define $D(x):\mathbb{R}^n\to\mathbb{R}^{n\times q}$ by $D(x)=[\widetilde{D}~x]$.  Let $\Delta>0$.  Then for all $x\in\mathbb{R}^n$ with $\|x\|=\Delta$, we have 
        \begin{equation*}
            \sigma_{\min}(D(x))\le\min\left\{\sigma_{\min}(\widetilde{D}), \Delta\right\}.
        \end{equation*}

        In particular, if $x^*\in\mathbb{R}^n$ with $\|x^*\|=\Delta$ satisfies $d_i^\top x^*=0$ for all $i\in\{1,...,q-1\}$, then 
        \begin{equation*}
            \sigma_{\min}(D(x^*))=\min\left\{\sigma_{\min}(\widetilde{D}), \Delta\right\}.
        \end{equation*}
    \end{theo}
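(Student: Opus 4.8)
The plan is to use the variational characterization of the minimum singular value. Since $2 \le q \le n$, the matrix $D(x) \in \mathbb{R}^{n \times q}$ has at least as many rows as columns, so its smallest singular value equals $\sigma_{\min}(D(x)) = \min\{\|D(x)v\| : v \in \mathbb{R}^q, \|v\| = 1\}$, and the same holds for $\widetilde{D}$. Writing any unit vector $v \in \mathbb{R}^q$ as $v = [w^\top~t]^\top$ with $w \in \mathbb{R}^{q-1}$, $t \in \mathbb{R}$, and $\|w\|^2 + t^2 = 1$, the product factors as $D(x)v = \widetilde{D}w + tx$. All three claims will follow by choosing or estimating $w$ and $t$ appropriately.

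For the upper bound I would exhibit two test vectors. Taking $w = \mathbf{0}$, $t = 1$ gives $D(x)v = x$, so $\sigma_{\min}(D(x)) \le \|x\| = \Delta$. Taking $t = 0$ and $w = w^*$, where $w^* \in \mathbb{R}^{q-1}$ is a unit vector achieving $\|\widetilde{D}w^*\| = \sigma_{\min}(\widetilde{D})$, gives $D(x)v = \widetilde{D}w^*$, so $\sigma_{\min}(D(x)) \le \sigma_{\min}(\widetilde{D})$. Together these yield $\sigma_{\min}(D(x)) \le \min\{\sigma_{\min}(\widetilde{D}), \Delta\}$, and this part requires no assumption on $x$ beyond $\|x\| = \Delta$.

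For the equality statement I would establish the matching lower bound using the orthogonality hypothesis $d_i^\top x^* = 0$ for all $i$, which is equivalent to $\widetilde{D}^\top x^* = \mathbf{0}$, i.e., $x^*$ is orthogonal to the range of $\widetilde{D}$. Consequently $\widetilde{D}w \perp tx^*$ for every $w$ and $t$, and the Pythagorean identity gives $\|D(x^*)v\|^2 = \|\widetilde{D}w\|^2 + t^2\|x^*\|^2 \ge \sigma_{\min}(\widetilde{D})^2\|w\|^2 + t^2\Delta^2$. Setting $m = \min\{\sigma_{\min}(\widetilde{D}), \Delta\}$ and using $\sigma_{\min}(\widetilde{D})^2 \ge m^2$ together with $\Delta^2 \ge m^2$, this is bounded below by $m^2(\|w\|^2 + t^2) = m^2$. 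Minimizing over unit $v$ gives $\sigma_{\min}(D(x^*)) \ge m$, which combined with the upper bound from the previous paragraph yields the claimed equality.

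The argument is mostly routine once the variational characterization is in place, so I do not anticipate a serious obstacle. The one point warranting care is confirming that $\min_{\|v\|=1}\|D(x)v\|$ is genuinely the \emph{smallest} singular value rather than some other singular value; this holds precisely because $D(x)$ is tall, its $n \ge q$ rows guaranteeing that the smallest of its $q$ singular values is the one captured by this minimization. I would also remark that the bounds remain valid when $\widetilde{D}$ is rank-deficient, since then $\sigma_{\min}(\widetilde{D}) = 0$ and every inequality degrades gracefully.
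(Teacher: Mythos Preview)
Your proof is correct and follows essentially the same approach as the paper: both use the variational characterization $\sigma_{\min}(D(x))=\min_{\|v\|=1}\|D(x)v\|$ with the same two test vectors for the upper bound. For the equality, the paper observes that $\widetilde{D}^\top x^*=\mathbf{0}$ makes $D(x^*)^\top D(x^*)$ block diagonal and reads off its eigenvalues, whereas you use the equivalent Pythagorean decomposition $\|D(x^*)v\|^2=\|\widetilde{D}w\|^2+t^2\Delta^2$ directly; these are two phrasings of the same computation.
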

    \begin{proof}
        Let $x\in\mathbb{R}^n$ such that $\|x\|=\Delta$.  Since $q\le n$, we have $\sigma_{\min}(D(x))=\min_{\|y\|=1}\|D(x)y\|$ (\cite[Theorem 3.1.2]{horn1991topics}).  Consider $y=[y_1,...,y_q]^\top\in\mathbb{R}^q$.  Since $q\ge 2$, we can let $y_q=0$ and obtain 
        \begin{equation*}
        \min\limits_{\|y\|=1}\left\|D(x)y\right\|\le\min\limits_{\|y\|=1,y_q=0}\left\|D(x)y\right\|=\min\limits_{\|w\|=1}\left\|\widetilde{D}w\right\|=\sigma_{\min}(\widetilde{D}).
        \end{equation*}
        Clearly,
        \begin{equation*}
        \min\limits_{\|y\|=1}\left\|D(x)y\right\|\le\left\|D(x)e_q^{(q)}\right\|=\left\|x\right\|=\Delta.
        \end{equation*}
        Therefore, we have 
        \begin{equation*}
            \sigma_{\min}(D(x))\le\min\left\{\sigma_{\min}(\widetilde{D}), \Delta\right\},
        \end{equation*}
        which gives the first inequality.

        Now we prove the second equality.  Recall that $\sigma_{\min}(M)=\sqrt{\lambda_{\min}(M^\top M)}$ for any matrix $M\in\mathbb{R}^{n\times z}$ with $z\le n$.  Since $d_i^\top x^*=0$ for all $i\in\{1,...,q-1\}$, we have $\widetilde{D}^\top x^*=\mathbf{0}$.  Hence, 
        \begin{equation*}
            D(x^*)^\top D(x^*) = \begin{bmatrix}
                \widetilde{D}^\top \widetilde{D} & \mathbf{0}^\top\\
                \mathbf{0} & \left(x^*\right)^\top x^*
            \end{bmatrix},
        \end{equation*}
        whose eigenvalues are all eigenvalues of $\widetilde{D}^\top \widetilde{D}$ and $\left(x^*\right)^\top x^*=\Delta^2$.  Therefore, we obtain
        \begin{equation*}
            \sigma_{\min}(D(x^*))=\sqrt{\min\left\{\lambda_{\min}(\widetilde{D}^\top \widetilde{D}), \Delta^2\right\}}=\min\left\{\sigma_{\min}(\widetilde{D}), \Delta\right\},
        \end{equation*}
        which gives the second equality.

        $\hfill\qed$
    \end{proof}

    Theorem \ref{thm:orthisthebest} shows that if we want to add a column to a matrix using a vector with magnitude $\Delta$ such that the minimum singular value of the new matrix is maximized, then the best choice is a vector orthogonal to all the current columns.  This gives us the criterion for adding new directions, which is applied in Algorithm~\ref{alg:dirngen}.  
    
    Notice that Theorem \ref{thm:orthisthebest} also says that the largest possible minimum singular value of the new matrix is the minimum of the current minimum singular value and $\Delta$.  This implies that the subspaces constructed using Algorithms \ref{alg:mainrm} to \ref{alg:dirngen} satisfy 
    \begin{equation*}
    \left\|D_k^\dagger\right\|=
        \begin{cases}
          1\slash\Delta_k, & D^U_k~\text{is empty},\\
          \max\left\{1\slash\sigma_{\min}(D^U_k),1\slash\Delta_k\right\}, & D^U_k~\text{is not empty},
        \end{cases}       
    \end{equation*}
    where the first case comes from the fact that when $D^U_k$ is empty, $D_k$ consists of mutually orthogonal columns with length $\Delta_k$.  Since Algorithm \ref{alg:mainrm} guarantees that $\sigma_{\min}(D^U_k)\ge \epsilon_{\mathrm{geo}}$ whenever $D^U_k$ is not empty, we always have
    \begin{equation}\label{ineq:ebDinverse}
        \left\|D_k^\dagger\right\|\le\max\left\{\frac{1}{\epsilon_{\mathrm{geo}}},\frac{1}{\Delta_{\min}}\right\},
    \end{equation}
    Inequality \eqref{ineq:ebDinverse} is an important property and we will return to it later in Subsection \ref{subsec:modelAcc}.
    
    This also implies a criterion for removing directions.  That is, we want to drop the direction such that the remaining matrix has the largest minimum singular value. This criterion is applied in Algorithm \ref{alg:ptrm}.

    We note that Algorithm \ref{alg:ptrm} is different from the procedure used in \cite{cartis2023scalable}, which calculates the vector of $\theta_i$ values once and simultaneously removes directions with the largest $\theta_i$.  The following example shows that these two procedures may produce different results.
    \begin{exampl}\label{ex:ptrmAlgsCmp}
        Suppose $\Delta_{k+1}=1$.  Suppose we want to drop two directions from $$D^U_{k+1}=\begin{bmatrix}
            \frac{1}{2\sqrt{3}} & \frac{1}{\sqrt{3}} & \frac{1}{\sqrt{3}}\\
            0 & \frac{1}{\sqrt{3}} & \frac{1}{\sqrt{3}}\\
            0 & \frac{1}{\sqrt{3}} & \frac{1}{2\sqrt{3}}
        \end{bmatrix}.$$  Then we have
        \begin{equation*}
            M_1=\begin{bmatrix}
                \frac{1}{\sqrt{3}} & \frac{1}{\sqrt{3}}\\
                \frac{1}{\sqrt{3}} & \frac{1}{\sqrt{3}}\\
                \frac{1}{\sqrt{3}} & \frac{1}{2\sqrt{3}}
            \end{bmatrix},~~M_2=\begin{bmatrix}
                \frac{1}{2\sqrt{3}} & \frac{1}{\sqrt{3}}\\
                0 & \frac{1}{\sqrt{3}}\\
                0 & \frac{1}{2\sqrt{3}}
            \end{bmatrix},~~M_3=\begin{bmatrix}
                \frac{1}{2\sqrt{3}} & \frac{1}{\sqrt{3}}\\
                0 & \frac{1}{\sqrt{3}}\\
                0 & \frac{1}{\sqrt{3}}
            \end{bmatrix},
        \end{equation*}
        with minimum singular values
        \begin{equation*}
            \sigma_{\min}(M_1)\approx 0.180,~~\sigma_{\min}(M_2)\approx 0.210,~~\sigma_{\min}(M_3)\approx 0.232.
        \end{equation*}
        Hence, we have
        \begin{align*}
            \theta_1\approx 0.180\cdot\max\left\{\left(\frac{1}{2\sqrt{3}}\right)^4, 1\right\}&=0.180,\\
            \theta_2\approx 0.210\cdot\max\left\{1^4, 1\right\}&=0.210,\\
            \theta_3\approx 0.232\cdot\max\left\{\left(\frac{3}{4}\right)^4, 1\right\}&=0.232.
        \end{align*}

        According to the procedure used in \cite{cartis2023scalable}, we would drop the last two directions $[1\slash \sqrt{3},1\slash \sqrt{3},1\slash \sqrt{3}]^\top, [1\slash \sqrt{3},1\slash \sqrt{3},1\slash 2\sqrt{3}]^\top$.  The minimum singular value of the remaining matrix is $\sigma_{\min}([1\slash 2\sqrt{3},0,0]^\top)\approx 0.289$.

        However, if we use Algorithm \ref{alg:ptrm}, then we drop the last direction $[1\slash \sqrt{3},1\slash \sqrt{3},1\slash 2\sqrt{3}]^\top$ and update
        \begin{equation*}
            M_1=\begin{bmatrix}
                \frac{1}{\sqrt{3}}\\
                \frac{1}{\sqrt{3}}\\
                \frac{1}{\sqrt{3}}
            \end{bmatrix},~~M_2=\begin{bmatrix}
                \frac{1}{2\sqrt{3}}\\
                0\\
                0
            \end{bmatrix},
        \end{equation*}
        which gives
        \begin{equation*}
            \theta_1= 1,~~\theta_2\approx 0.289.
        \end{equation*}
        Next, we drop the first direction $[1\slash 2\sqrt{3},0,0]^\top$ and the minimum singular value of the remaining matrix is $\sigma_{\min}([1\slash \sqrt{3},1\slash \sqrt{3},1\slash \sqrt{3}]^\top)=1$.
    \end{exampl}

    Since $1>0.289$, in Example \ref{ex:ptrmAlgsCmp}, the result given by Algorithm \ref{alg:ptrm} is better than the procedure used in \cite{cartis2023scalable}.  In fact, if $p_{\mathrm{{rm}}}\le 2$, then the result given by Algorithm \ref{alg:ptrm} is always no worse than the procedure used in \cite{cartis2023scalable}.  Indeed, the first direction removed by both procedures is always the same.  For the second direction, Algorithm \ref{alg:ptrm} calculates the $\theta_i$ of every remaining direction, including the second direction removed by the procedure used in \cite{cartis2023scalable}, and removes the direction with the largest value.  However, if $p_{\mathrm{{rm}}} > 2$, then we do not have guarantees on which procedure gives the better result.  In fact, we have randomly generated matrices $D_{k+1}^U\in\mathbb{R}^{4\times 4}$ and let $p_{\mathrm{rm}}=3$.  Numerical results of running each procedure on these matrices show that there are cases in which each of the procedures produces better results.  Future research could further explore this problem and develop an optimal strategy for removing directions.

    \subsection{Model accuracy}\label{subsec:modelAcc}
    In $\myalg$, we require the models to provide a certain level of accuracy in the subspaces.  Hence, following \cite{cartis2023scalable}, we introduce the notion of $Q$-fully linear models and $Q$-fully quadratic models.

    \begin{defini}
        Given $f\in\mathcal{C}^1,x\in\mathbb{R}^n,\bar{\Delta}>0$, and $Q\in\mathbb{R}^{n\times p}$, we say that $\mathcal{M}_{\bar{\Delta}}=\{\widehat{m}_{\Delta}:\mathbb{R}^p\to\mathbb{R}\}_{\Delta\in (0,\bar{\Delta}]}$ is a class of $Q$-fully linear models of $f$ at $x$ parameterized by $\Delta$ if there exists constants $\kappa_{ef}(x)>0$ and $\kappa_{eg}(x)>0$ such that for all $\Delta\in (0,\bar{\Delta}]$ and $\widehat{s}\in\mathbb{R}^p$ with $\|\widehat{s}\|\le\Delta$,
        \begin{align*}
            \left|f(x+Q\widehat{s})-\widehat{m}_{\Delta}(\widehat{s})\right|&\le\kappa_{ef}(x)\Delta^2,\\
            \left\|Q^\top\nabla f(x+Q\widehat{s})-\nabla\widehat{m}_{\Delta}(\widehat{s})\right\|&\le\kappa_{eg}(x)\Delta.
        \end{align*}
    \end{defini}
    \begin{defini}
        Given $f\in\mathcal{C}^2,x\in\mathbb{R}^n,\bar{\Delta}>0$, and $Q\in\mathbb{R}^{n\times p}$, we say that $\mathcal{M}_{\bar{\Delta}}=\{\widehat{m}_{\Delta}:\mathbb{R}^p\to\mathbb{R}\}_{\Delta\in (0,\bar{\Delta}]}$ is a class of $Q$-fully quadratic models of $f$ at $x$ parameterized by $\Delta$ if there exists constants $\kappa_{ef}(x)>0,\kappa_{eg}(x)>0$, and $\kappa_{eh}(x)>0$ such that for all $\Delta\in (0,\bar{\Delta}]$ and $\widehat{s}\in\mathbb{R}^p$ with $\|\widehat{s}\|\le\Delta$,
        \begin{align*}
            \left|f(x+Q\widehat{s})-\widehat{m}_{\Delta}(\widehat{s})\right|&\le\kappa_{ef}(x)\Delta^3,\\
            \left\|Q^\top\nabla f(x+Q\widehat{s})-\nabla\widehat{m}_{\Delta}(\widehat{s})\right\|&\le\kappa_{eg}(x)\Delta^2,\\
            \left\|Q^\top\nabla^2 f(x+Q\widehat{s})Q-\nabla^2\widehat{m}_{\Delta}(\widehat{s})\right\|&\le\kappa_{eh}(x)\Delta.
        \end{align*}
    \end{defini}

    Clearly, if $\mathcal{M}_{\bar{\Delta}}$ is a class of $Q$-fully quadratic models of $f$ at $x$, then $\mathcal{M}_{\bar{\Delta}}$ is also a class of $Q$-fully linear models of $f$ at $x$.

    In this subsection, we develop error bounds for the models and show that each $\widehat{m}_k$ belongs to a class of $Q_k$-fully quadratic models of $f$ at $x_k$, where $Q_kR_k$ is the $QR$-factorization of $D_k$.  Moreover, we show that constants $\kappa_{ef},\kappa_{eg}$, and $\kappa_{eh}$ are independent of $k$.
    
    We note that the convergence of $\myalg$ only requires that each $\widehat{m}_k$ belongs to a class of $Q_k$-fully linear models (see Section \ref{sec:converg} for details).  As this requirement is weaker than $Q_k$-fully quadratic, our convergence analysis holds for $\widehat{m}_k$.

    We begin by introducing two lemmas that are crucial to the proof of the error bounds.  The first lemma is an application of Taylor's theorem.
    \begin{lemm}\label{lem:Taylorremeb}
        Let $x\in\mathbb{R}^n,\Delta>0$, and $f\in\mathcal{C}^{2+}$ on $B(x;\Delta)$ with constant $L_{\nabla^2 f}$. For all $d\in\mathbb{R}^n$ with $\|d\|\le\Delta$,
        \begin{equation*}
            \left|f(x+d)-f(x)-\nabla f(x)^\top d - \frac{1}{2}d^\top\nabla^2 f(x) d\right|\le\frac{1}{6}L_{\nabla^2 f}\left\|d\right\|^3.
        \end{equation*}
    \end{lemm}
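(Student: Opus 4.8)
The plan is to reduce this multivariate statement to a one-dimensional Taylor expansion along the segment joining $x$ and $x+d$, and then to exploit the Lipschitz continuity of $\nabla^2 f$ to control the remainder. First I would define the scalar function $\phi(t)=f(x+td)$ for $t\in[0,1]$. Since $\|td\|=t\|d\|\le\|d\|\le\Delta$, every point $x+td$ lies in $B(x;\Delta)$, so $\phi$ is well-defined and, because $f\in\mathcal{C}^2$ there, twice continuously differentiable with $\phi'(t)=\nabla f(x+td)^\top d$ and $\phi''(t)=d^\top\nabla^2 f(x+td)\,d$.

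Next I would apply Taylor's theorem with the integral form of the remainder to $\phi$ on $[0,1]$, giving
\begin{equation*}
f(x+d)=f(x)+\nabla f(x)^\top d+\int_0^1(1-t)\,d^\top\nabla^2 f(x+td)\,d\,dt.
\end{equation*}
The key observation is that $\int_0^1(1-t)\,dt=\tfrac12$, so the purely quadratic term can be rewritten as $\tfrac12 d^\top\nabla^2 f(x)\,d=\int_0^1(1-t)\,d^\top\nabla^2 f(x)\,d\,dt$. Subtracting, the quantity to be bounded collapses to the single integral
\begin{equation*}
\int_0^1(1-t)\,d^\top\bigl(\nabla^2 f(x+td)-\nabla^2 f(x)\bigr)d\,dt.
\end{equation*}

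Finally I would estimate this integral. Using the operator-norm inequality $|d^\top M d|\le\|M\|\,\|d\|^2$ together with the Lipschitz bound $\|\nabla^2 f(x+td)-\nabla^2 f(x)\|\le L_{\nabla^2 f}\,\|td\|=L_{\nabla^2 f}\,t\,\|d\|$, the integrand is at most $(1-t)\,t\,L_{\nabla^2 f}\,\|d\|^3$. Pulling out the constants and evaluating $\int_0^1 t(1-t)\,dt=\tfrac12-\tfrac13=\tfrac16$ yields exactly the claimed bound $\tfrac16 L_{\nabla^2 f}\|d\|^3$. There is no real obstacle in this argument; the only points requiring a little care are verifying that the whole segment $\{x+td:t\in[0,1]\}$ stays inside $B(x;\Delta)$ so that both the $\mathcal{C}^2$ regularity and the Lipschitz estimate are available, and choosing the integral (rather than Lagrange) form of the remainder so that the sharp constant $\tfrac16$ emerges cleanly from $\int_0^1 t(1-t)\,dt$.
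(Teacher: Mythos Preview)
Your argument is correct and complete: the reduction to the scalar function $\phi(t)=f(x+td)$, the integral-form Taylor expansion, and the estimate via $\int_0^1 t(1-t)\,dt=\tfrac{1}{6}$ all go through exactly as you describe, and you have correctly noted the need for the segment to remain in $B(x;\Delta)$.

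The paper itself does not give a proof at all; it simply cites \cite[Lemma~4.1.14]{dennis1996numerical}. Your write-up is essentially the standard textbook derivation that such a citation points to, so in effect you have supplied what the paper omits. There is nothing to compare in terms of strategy, since the paper offers none; your proof would serve perfectly well as a self-contained replacement for the bare reference.
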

    \begin{proof}
        See \cite[Lemma 4.1.14]{dennis1996numerical}.

        $\hfill\qed$
    \end{proof}

    The second lemma is an error bound for the generalized simplex Hessian.  This result is a simple extension of the error bound presented in \cite{hare2023matrix}.
    \begin{lemm}\label{lem:gsheb}
        Let $x^0\in\mathbb{R}^n,\bar{\Delta}>0$, and $f\in\mathcal{C}^{2+}$ on $B(x^0;\bar{\Delta})$ with constant $L_{\nabla^2 f}$.  Suppose $D=[d_1\cdots d_n]\in\mathbb{R}^{n\times n}$ is invertible and $\appdiam(D)=\max_{1\le i\le n}\|d_i\|\le\bar{\Delta}$.  Then for all $x\in B(x^0;\appdiam(D))$ we have
        \begin{equation*}
            \left\|\nabla^2f(x) - \nabla_S^2f(x^0;D)\right\|\le 4nL_{\nabla^2 f}\left\|\widehat{D}^{-\top}\right\|^2\appdiam(D) + L_{\nabla^2 f}\appdiam(D),
        \end{equation*}
        where $\widehat{D}=D\slash\appdiam(D)$.
    \end{lemm}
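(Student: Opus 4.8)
The plan is to treat this as a straightforward consequence of the base-point error bound for the generalized simplex Hessian already available in \cite{hare2023matrix}, combined with the Lipschitz continuity of $\nabla^2 f$ used to move from $x^0$ to a nearby point $x$. For any fixed $x\in B(x^0;\appdiam(D))$, I would open with the triangle inequality
\begin{equation*}
\left\|\nabla^2 f(x) - \nabla_S^2 f(x^0;D)\right\| \le \left\|\nabla^2 f(x) - \nabla^2 f(x^0)\right\| + \left\|\nabla^2 f(x^0) - \nabla_S^2 f(x^0;D)\right\|,
\end{equation*}
which isolates a ``true-Hessian displacement'' term and a ``model-at-$x^0$'' term matching the two summands of the claim.

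For the displacement term, the hypothesis $f\in\mathcal{C}^{2+}$ on $B(x^0;\bar\Delta)$ with constant $L_{\nabla^2 f}$ means precisely that $\nabla^2 f$ is $L_{\nabla^2 f}$-Lipschitz there; since $\|x-x^0\|\le\appdiam(D)\le\bar\Delta$, this immediately gives $\|\nabla^2 f(x)-\nabla^2 f(x^0)\|\le L_{\nabla^2 f}\appdiam(D)$, the final summand of the bound. This is the only place where the ``for all $x$'' generality (rather than $x=x^0$) enters, and it is the entire content of the ``simple extension'' over \cite{hare2023matrix}.

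For the model-at-$x^0$ term, I would invoke the generalized simplex Hessian error bound of \cite{hare2023matrix}, specialized to our construction where the inner and outer direction matrices coincide with the invertible matrix $D$ (the special case noted when $\nabla_S^2 f$ was introduced), to obtain $\|\nabla^2 f(x^0)-\nabla_S^2 f(x^0;D)\|\le 4nL_{\nabla^2 f}\|\widehat{D}^{-\top}\|^2\appdiam(D)$; adding the two bounds proves the lemma. If instead a self-contained derivation is preferred, the cleanest route uses the identity $D^\top\nabla_S^2 f(x^0;D)D=\delta_{\delta_f}(x^0;D)$ established inside the proof of Theorem \ref{thm:fullspaceunderd}, which for invertible $D$ rearranges to $\nabla_S^2 f(x^0;D)=D^{-\top}\delta_{\delta_f}(x^0;D)D^{-1}$. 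Writing $\nabla^2 f(x^0)=D^{-\top}(D^\top\nabla^2 f(x^0)D)D^{-1}$ as well, the difference becomes the sandwich $D^{-\top}\big[D^\top\nabla^2 f(x^0)D-\delta_{\delta_f}(x^0;D)\big]D^{-1}$, whose middle matrix has $(i,j)$ entry $d_i^\top\nabla^2 f(x^0)d_j-\big(f(x^0+d_i+d_j)-f(x^0+d_i)-f(x^0+d_j)+f(x^0)\big)$. Applying Lemma \ref{lem:Taylorremeb} to the three increments $d_i$, $d_j$, and $d_i+d_j$ shows each such entry reduces to a combination of cubic Taylor remainders and is therefore bounded by a constant multiple of $L_{\nabla^2 f}\appdiam(D)^3$; a Frobenius-to-spectral norm estimate over the $n\times n$ middle matrix, together with the two occurrences of $\|D^{-1}\|=\|\widehat{D}^{-\top}\|/\appdiam(D)$ coming from the sandwich, then produces the $\|\widehat{D}^{-\top}\|^2$ factor and collapses the three powers of $\appdiam(D)$ to one.

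The main obstacle is bookkeeping rather than conceptual: one must confirm that specializing the bound of \cite{hare2023matrix} (or running the direct entrywise estimate above) yields precisely the constant $4n$ with the factor $\|\widehat{D}^{-\top}\|^2$ under the normalization $\widehat{D}=D/\appdiam(D)$. In particular, care is needed to account for $\|d_i+d_j\|\le 2\appdiam(D)$ in the cubic remainders and for the squared appearance of $\|D^{-1}\|$, so that the advertised (mildly loose) constant $4n$ indeed dominates the sharper value the entrywise count suggests.
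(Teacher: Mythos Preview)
Your proposal is correct and follows essentially the same approach as the paper: the paper's proof is precisely the triangle-inequality split you describe, invoking \cite[Theorem~4.2]{hare2023matrix} for the base-point term and the Lipschitz continuity of $\nabla^2 f$ for the displacement term. The only detail the paper adds that you do not explicitly flag is that \cite[Theorem~4.2]{hare2023matrix} is stated under a stronger smoothness hypothesis, and one must note (via Lemma~\ref{lem:Taylorremeb}) that its proof carries over verbatim to $f\in\mathcal{C}^{2+}$; your optional self-contained derivation already handles this implicitly.
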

    \begin{proof}
        Using Lemma \ref{lem:Taylorremeb}, the proof of \cite[Theorem 4.2]{hare2023matrix} holds for  $f\in\mathcal{C}^{2+}$, we see that
        \begin{equation*}
            \left\|\nabla^2f(x^0) - \nabla_S^2f(x^0;D)\right\|\le 4nL_{\nabla^2 f}\left\|\widehat{D}^{-\top}\right\|^2\appdiam(D).
        \end{equation*}
        The proof is complete by noticing that
        \begin{equation*}
            \left\|\nabla^2f(x) - \nabla_S^2f(x^0;D)\right\|\le \left\|\nabla^2f(x^0) - \nabla_S^2f(x^0;D)\right\| + \left\|\nabla^2f(x) - \nabla^2f(x^0)\right\|
        \end{equation*}
        and
        \begin{align*}
            \left\|\nabla^2f(x) - \nabla^2f(x^0)\right\|\le L_{\nabla^2 f}\left\|x-x^0\right\|\le L_{\nabla^2 f}\appdiam(D).
        \end{align*}

        $\hfill\qed$
    \end{proof}

    Now we can start to develop the error bounds for $\widehat{m}$.  The next lemma shows that restricting $f$ to a subspace determined by an orthonormal basis does not increase its Lipschitz constant.
    \begin{lemm}
        Suppose $f\in\mathcal{C}^{2+}$ with Lipschitz constant $L_{\nabla^2 f}$.  Suppose $Q\in\mathbb{R}^{n\times p}$ consists of $p$ orthonormal columns and let $x\in\mathbb{R}^n$.  Let $\widehat{f}(\widehat{s})=f(x+Q\widehat{s})$.  Then $\nabla^2\widehat{f}$ is Lipschitz continuous on $\mathbb{R}^p$ with constant $L_{\nabla^2 f}$.
    \end{lemm}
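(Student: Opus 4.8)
The plan is to compute the Hessian of $\widehat{f}$ explicitly via the chain rule and then bound the difference of Hessians at two arbitrary points using submultiplicativity of the induced matrix norm together with the fact that $Q$ has orthonormal columns. The whole argument reduces to the observation that $Q^\top Q = I_p$, which supplies both $\|Q\| = \|Q^\top\| = 1$ and the norm-preservation property $\|Q\widehat{s}\| = \|\widehat{s}\|$.

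First I would apply the chain rule to $\widehat{f}(\widehat{s}) = f(x + Q\widehat{s})$ to obtain $\nabla\widehat{f}(\widehat{s}) = Q^\top\nabla f(x + Q\widehat{s})$ and $\nabla^2\widehat{f}(\widehat{s}) = Q^\top\nabla^2 f(x + Q\widehat{s})Q$. Then, for any $\widehat{s}_1,\widehat{s}_2\in\mathbb{R}^p$, I would write the difference as $\nabla^2\widehat{f}(\widehat{s}_1) - \nabla^2\widehat{f}(\widehat{s}_2) = Q^\top\bigl(\nabla^2 f(x + Q\widehat{s}_1) - \nabla^2 f(x + Q\widehat{s}_2)\bigr)Q$ and apply submultiplicativity. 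Since the columns of $Q$ are orthonormal, $Q^\top Q = I_p$, so $\|Q\| = 1$ and $\|Q^\top\| = 1$, and the problem reduces to bounding $\|\nabla^2 f(x + Q\widehat{s}_1) - \nabla^2 f(x + Q\widehat{s}_2)\|$.

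Finally, I would invoke the Lipschitz continuity of $\nabla^2 f$ to bound this difference by $L_{\nabla^2 f}\|Q(\widehat{s}_1 - \widehat{s}_2)\|$, and use norm preservation (for any $v\in\mathbb{R}^p$, $\|Qv\|^2 = v^\top Q^\top Q v = v^\top v = \|v\|^2$) to collapse this to $L_{\nabla^2 f}\|\widehat{s}_1 - \widehat{s}_2\|$, giving exactly the claimed Lipschitz constant. There is no genuine obstacle here; the only point requiring care is performing the chain-rule computation of the Hessian correctly and recognizing that it is orthonormality of the \emph{columns} of $Q$ (that is, $Q^\top Q = I_p$, not $QQ^\top = I_n$) that is needed at both junctures — to force $\|Q\| = 1$ and to preserve the norm of $\widehat{s}_1 - \widehat{s}_2$ under multiplication by $Q$.
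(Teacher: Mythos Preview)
Your proposal is correct and is exactly the natural argument: compute $\nabla^2\widehat{f}(\widehat{s})=Q^\top\nabla^2 f(x+Q\widehat{s})Q$ via the chain rule, then use $\|Q\|=\|Q^\top\|=1$ and $\|Q(\widehat{s}_1-\widehat{s}_2)\|=\|\widehat{s}_1-\widehat{s}_2\|$, both consequences of $Q^\top Q=I_p$. The paper in fact omits the proof of this lemma entirely (treating it as routine), so there is nothing to compare against; your argument is the standard one and would serve perfectly well as the missing proof.
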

%

    Henceforth, we will not differentiate between the Lipschitz constant of $\nabla^2 f$ on $\mathbb{R}^n$ and $\nabla^2\widehat{f}$ on $\mathbb{R}^p$, and denote them both by $L_{\nabla^2 f}$.
    
    The following theorem gives the error bounds for $\widehat{m}$. The bounds are given in terms of matrix $R$, but in Theorem \ref{thm:fullyquadconsts} we shall explain how the error bounds are related to $\|D_k^\dagger\|$, which is controlled by our sample set management procedure (see Subsection~\ref{subsec:samplesetgeometry} for details).  
    
    In the following results, we suppose $R=[r_1\cdots r_p]$ and denote $$\appdiam(R)=\max\limits_{1\le i\le p}\left\|r_i\right\|~~\text{and}~~\widehat{R}=\frac{1}{\appdiam(R)}R.$$  The proof is inspired by \cite[Theorem 3.16]{conn2009introduction}. 
    \begin{theo}\label{thm:modelebs}
        Suppose $f\in\mathcal{C}^{2+}$ with Lipschitz constant $L_{\nabla^2 f}$.  Suppose $D\in\mathbb{R}^{n\times p}$ has full-column rank and let $x^0\in\mathbb{R}^n$.  Suppose the $QR$-factorization of $D$ is $D=QR$, where $Q\in\mathbb{R}^{n\times p}$ and $R\in\mathbb{R}^{p\times p}$.    Let $\widehat{f}(\widehat{s})=f(x^0+Q\widehat{s})$.  Then for all $\widehat{s}\in\mathbb{R}^p$ with $\|\widehat{s}\|\le\appdiam(R)$, we have
        \begin{equation*}
            \left\|Q^\top\nabla^2 f(x^0+Q\widehat{s})Q-\nabla^2\widehat{m}(\widehat{s})\right\|\le L_{\nabla^2 f}\left(4p\left\|\widehat{R}^{-\top}\right\|^2+1\right)\appdiam(R),
        \end{equation*}
        \begin{equation*}
            \left\|Q^\top\nabla f(x^0+Q\widehat{s})-\nabla\widehat{m}(\widehat{s})\right\|\le 3\sqrt{p}L_{\nabla^2 f}\left\|\widehat{R}^{-\top}\right\|\left(2p\left\|\widehat{R}^{-\top}\right\|^2+1\right)\appdiam(R)^2,
        \end{equation*}
        and 
        \begin{equation*}
        \begin{aligned}
            &~~\left|f(x^0+Q\widehat{s})-\widehat{m}(\widehat{s})\right|\\
            &\le\left(\sqrt{p}L_{\nabla^2 f}\left\|\widehat{R}^{-\top}\right\|\left(6p\left\|\widehat{R}^{-\top}\right\|^2+2\sqrt{p}\left\|\widehat{R}^{-\top}\right\|+3\right) + \frac{2}{3}L_{\nabla^2 f}\right)\appdiam(R)^3.
        \end{aligned}
        \end{equation*}
    \end{theo}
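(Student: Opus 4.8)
The plan is to reduce everything to the subspace copy $\widehat{f}$ and then apply the two preceding lemmas. Because $Q$ has orthonormal columns, the chain rule gives $\nabla\widehat{f}(\widehat{s})=Q^\top\nabla f(x^0+Q\widehat{s})$ and $\nabla^2\widehat{f}(\widehat{s})=Q^\top\nabla^2 f(x^0+Q\widehat{s})Q$, so the three quantities to be bounded are exactly $\|\nabla^2\widehat{f}(\widehat{s})-\nabla^2\widehat{m}(\widehat{s})\|$, $\|\nabla\widehat{f}(\widehat{s})-\nabla\widehat{m}(\widehat{s})\|$, and $|\widehat{f}(\widehat{s})-\widehat{m}(\widehat{s})|$ in $\mathbb{R}^p$. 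The preceding lemma guarantees that $\nabla^2\widehat{f}$ is Lipschitz on $\mathbb{R}^p$ with the same constant $L_{\nabla^2 f}$, and since $D$ has full-column rank, $R$ is invertible, so $R^\dagger=R^{-1}$ and $\widehat{m}$ is the quadratic interpolation model of Theorem \ref{thm:subspaced}. I will repeatedly use that $\nabla^2\widehat{m}(\widehat{s})=\nabla_S^2\widehat{f}(\mathbf{0};R)$ is constant and $\nabla\widehat{m}(\widehat{s})=g+\nabla_S^2\widehat{f}(\mathbf{0};R)\widehat{s}$ with $g=2\nabla_S\widehat{f}(\mathbf{0};R)-\nabla_S\widehat{f}(\mathbf{0};2R)$, together with the elementary fact that $\|\widehat{R}^{-\top}\|=1/\sigma_{\min}(\widehat{R})\ge 1$, which holds because every column of $\widehat{R}$ has norm at most one.

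For the Hessian bound I will simply invoke Lemma \ref{lem:gsheb} with $n$, $D$, $x^0$, $x$ replaced by $p$, $R$, $\mathbf{0}$, $\widehat{s}$: the hypothesis $\|\widehat{s}\|\le\appdiam(R)$ matches the requirement $x\in B(x^0;\appdiam(R))$, and the conclusion is precisely the first inequality.

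For the gradient bound I will write $\nabla\widehat{f}(\widehat{s})-\nabla\widehat{m}(\widehat{s})$ as the sum of three pieces: the gradient error at the origin $\nabla\widehat{f}(\mathbf{0})-g$; the Hessian error acting on the step $\bigl(\nabla^2\widehat{f}(\mathbf{0})-\nabla_S^2\widehat{f}(\mathbf{0};R)\bigr)\widehat{s}$; and the Taylor remainder $\nabla\widehat{f}(\widehat{s})-\nabla\widehat{f}(\mathbf{0})-\nabla^2\widehat{f}(\mathbf{0})\widehat{s}$. The middle piece is controlled by Lemma \ref{lem:gsheb} evaluated at $x=\mathbf{0}$ together with $\|\widehat{s}\|\le\appdiam(R)$, while the last piece is at most $\tfrac12 L_{\nabla^2 f}\|\widehat{s}\|^2$ by integrating the Lipschitz estimate for $\nabla^2\widehat{f}$. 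The crux is the first piece, i.e.\ the second-order accuracy of the adapted centred simplex gradient $g$. Here I will expand $\widehat{f}(r_i)$ and $\widehat{f}(2r_i)$ via Lemma \ref{lem:Taylorremeb}; the constant, linear, and quadratic contributions combine so that the $i$-th entry of $2\delta_{\widehat{f}}(\mathbf{0};R)-\tfrac12\delta_{\widehat{f}}(\mathbf{0};2R)$ equals $\nabla\widehat{f}(\mathbf{0})^\top r_i$ up to a cubic remainder bounded by $L_{\nabla^2 f}\|r_i\|^3$ (the quadratic terms cancel, which is exactly why $g$ is more accurate than a one-sided simplex gradient). Since $g=(R^\top)^{-1}\bigl(2\delta_{\widehat{f}}(\mathbf{0};R)-\tfrac12\delta_{\widehat{f}}(\mathbf{0};2R)\bigr)$, left-multiplying by $(R^\top)^{-1}$ and using $\|(R^\top)^{-1}\|=\|\widehat{R}^{-\top}\|/\appdiam(R)$ yields $\|\nabla\widehat{f}(\mathbf{0})-g\|\le\sqrt{p}\,L_{\nabla^2 f}\|\widehat{R}^{-\top}\|\appdiam(R)^2$. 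Summing the three pieces and absorbing lower-order terms using $\|\widehat{R}^{-\top}\|\ge 1$ and $p\ge 1$ gives the stated factored bound.

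Finally, for the function-value bound I will Taylor-expand $\widehat{f}$ about $\mathbf{0}$ to second order via Lemma \ref{lem:Taylorremeb}, so that $\widehat{f}(\widehat{s})-\widehat{m}(\widehat{s})$ decomposes into $(\nabla\widehat{f}(\mathbf{0})-g)^\top\widehat{s}$, the quadratic form $\tfrac12\widehat{s}^\top(\nabla^2\widehat{f}(\mathbf{0})-\nabla_S^2\widehat{f}(\mathbf{0};R))\widehat{s}$, and a cubic Taylor remainder of magnitude at most $\tfrac16 L_{\nabla^2 f}\appdiam(R)^3$. Bounding the first two terms by the gradient-at-origin and Hessian estimates already obtained, using $\|\widehat{s}\|\le\appdiam(R)$, and then collecting constants gives the third inequality. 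The main obstacle throughout is this gradient step: establishing the cancellation of the quadratic terms in $g$ and correctly tracking the normalisation between $\|(R^\top)^{-1}\|$ and $\|\widehat{R}^{-\top}\|$; once that estimate is in hand, the Hessian and function-value bounds follow by routine substitution and triangle inequalities.
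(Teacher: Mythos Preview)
Your proposal is correct and takes a genuinely different route from the paper.

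The paper follows the classical Conn--Scheinberg--Vicente template: it works directly with the errors $E^f(\widehat{s}),E^g(\widehat{s}),E^H(\widehat{s})$ at the \emph{target} point $\widehat{s}$, exploits the interpolation conditions $\widehat{m}(r_i)=\widehat{f}(r_i)$ (including $r_i=\mathbf{0}$) to obtain an expression for $r_i^\top E^g(\widehat{s})$ in terms of $E^H(\widehat{s})$ and Taylor remainders, and then pulls back through $\widehat{R}^{-\top}$. The function-value bound is then read off from the equation at $r_i=\mathbf{0}$, which already contains $E^f(\widehat{s})$, $E^g(\widehat{s})$, and $E^H(\widehat{s})$.

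Your approach instead decomposes the error at $\widehat{s}$ into (i) the model error at the origin, (ii) the Hessian error transported by $\widehat{s}$, and (iii) a Taylor remainder. The only nontrivial ingredient is the $O(\appdiam(R)^2)$ accuracy of the adapted centred simplex gradient $g=2\nabla_S\widehat{f}(\mathbf{0};R)-\nabla_S\widehat{f}(\mathbf{0};2R)$, which you establish directly by Taylor-expanding $\widehat{f}(r_i)$ and $\widehat{f}(2r_i)$ and observing the quadratic cancellation. This is more modular (the same gradient-at-origin estimate could be reused elsewhere) and in fact produces sharper raw constants; e.g.\ for the gradient you get roughly $\sqrt{p}\|\widehat{R}^{-\top}\|+4p\|\widehat{R}^{-\top}\|^2+\tfrac12$ before absorbing, versus the paper's $6p\sqrt{p}\|\widehat{R}^{-\top}\|^3+3\sqrt{p}\|\widehat{R}^{-\top}\|$. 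Your absorption step, using $\|\widehat{R}^{-\top}\|\ge 1$ and $p\ge 1$, legitimately recovers the stated bounds. The paper's route, by contrast, never isolates the second-order accuracy of $g$ as a standalone fact; the interpolation conditions do that work implicitly.
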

    \begin{proof}
        For simplicity, we denote 
        \begin{equation}\label{eq:modelcompactnotation}
            \widehat{m}(\widehat{s}) = c+\widehat{g}^\top\widehat{s}+\frac{1}{2}\widehat{s}^\top\widehat{H}\widehat{s}
        \end{equation}
        and define the error between the model and true values by
        \begin{equation}\label{eq:modelfnc}
            E^f(\widehat{s}) = \widehat{m}(\widehat{s}) - \widehat{f}(\widehat{s}),
        \end{equation}
        \begin{equation}\label{eq:modelgrad}
            E^g(\widehat{s}) = \nabla\widehat{m}(\widehat{s}) - \nabla\widehat{f}(\widehat{s}) = \widehat{g}+\widehat{H}\widehat{s} - \nabla\widehat{f}(\widehat{s}),
        \end{equation}
        \begin{equation}\label{eq:modelHess}
            E^H(\widehat{s}) = \nabla^2\widehat{m}(\widehat{s}) - \nabla^2\widehat{f}(\widehat{s}) = \widehat{H} - \nabla^2\widehat{f}(\widehat{s}).
        \end{equation}

        According to Lemma \ref{lem:gsheb}, we have
        \begin{equation*}
            \left\|E^H(\widehat{s})\right\|=\left\|\widehat{H}-\nabla^2\widehat{f}(\widehat{s})\right\|\le L_{\nabla^2 f}\left(4p\left\|\widehat{R}^{-\top}\right\|^2+1\right)\appdiam(R),
        \end{equation*}
        which is the first error bound.

        Since $\widehat{m}$ interpolates $\widehat{f}$ on $\{\mathbf{0}\}\cup\{r_i:i=1,...,p\}$ (Theorem \ref{thm:subspaced}), we have $$\widehat{m}(r_i)=c+\widehat{g}^\top r_i+\frac{1}{2}r_i^\top\widehat{H}r_i=\widehat{f}(r_i)$$ for all $r_i\in[\mathbf{0}~r_1\cdots r_p]$.  Subtracting \eqref{eq:modelcompactnotation} from this equation, and noticing that $\widehat{H}$ is symmetric (\cite[Proposition 4.4]{hare2023matrix}), we obtain
        \begin{equation}\label{eq:mr-ms}
            \widehat{m}(r_i)-\widehat{m}(\widehat{s}) = 
            \left(r_i-\widehat{s}\right)^\top\widehat{g} + \frac{1}{2}\left(r_i-\widehat{s}\right)^\top\widehat{H}\left(r_i-\widehat{s}\right) + \left(r_i-\widehat{s}\right)^\top\widehat{H}\widehat{s} = \widehat{f}(r_i)-\widehat{m}(\widehat{s})
        \end{equation}
        for all $r_i\in[\mathbf{0}~r_1\cdots r_p]$.  Substituting the expression of $\widehat{m}(\widehat{s})$, $\widehat{g}$, and $\widehat{H}$ from \eqref{eq:modelfnc} to \eqref{eq:modelHess} into \eqref{eq:mr-ms}, then regrouping terms, we get
        \begin{equation}\label{eq:CSVeq3.16}
        \begin{aligned}
            \left(r_i-\widehat{s}\right)^\top E^g(\widehat{s}) + \frac{1}{2}\left(r_i-\widehat{s}\right)^\top E^H(\widehat{s})\left(r_i-\widehat{s}\right) &= \widehat{f}(r_i)-\widehat{f}(\widehat{s})-\left(r_i-\widehat{s}\right)^\top\nabla\widehat{f}(\widehat{s})\\
            &~~~-\frac{1}{2}\left(r_i-\widehat{s}\right)^\top\nabla^2\widehat{f}(\widehat{s})\left(r_i-\widehat{s}\right)-E^f(\widehat{s}),
        \end{aligned}
        \end{equation}
        for all $r_i\in[\mathbf{0}~r_1\cdots r_p]$.  In particular, when $r_i=\mathbf{0}$, we have
        \begin{equation}\label{eq:CSVeq3.16at0}
            -\widehat{s}^\top E^g(\widehat{s}) + \frac{1}{2}\widehat{s}^\top E^H(\widehat{s})\widehat{s} = \widehat{f}(\mathbf{0})-\widehat{f}(\widehat{s})-\left(-\widehat{s}\right)^\top\nabla\widehat{f}(\widehat{s})-\frac{1}{2}\left(-\widehat{s}\right)^\top\nabla^2\widehat{f}(\widehat{s})\left(-\widehat{s}\right)-E^f(\widehat{s}).
        \end{equation}
        Since $\widehat{H}$ and $\nabla^2\widehat{f}(\widehat{s})$ are symmetric, we have $E^H(\widehat{s})$ is symmetric.  Subtracting \eqref{eq:CSVeq3.16at0} from \eqref{eq:CSVeq3.16}, then regrouping terms, we get  
        \begin{equation}\label{eq:CSVeq3.16aftercanceling}
            r_i^\top E^g(\widehat{s}) = r_i^\top E^H(\widehat{s})\widehat{s}  - \frac{1}{2}r_i^\top E^H(\widehat{s})r_i + \mathrm{Rem}_i - \mathrm{Rem}_0
        \end{equation}
        for all $r_i\in[r_1\cdots r_p]$, where
        \begin{equation*}
            \mathrm{Rem}_i = \widehat{f}(r_i)-\widehat{f}(\widehat{s})-\left(r_i-\widehat{s}\right)^\top\nabla\widehat{f}(\widehat{s})-\frac{1}{2}\left(r_i-\widehat{s}\right)^\top\nabla^2\widehat{f}(\widehat{s})\left(r_i-\widehat{s}\right)
        \end{equation*}
        and 
        \begin{equation*}
            \mathrm{Rem}_0 = \widehat{f}(\mathbf{0})-\widehat{f}(\widehat{s})-\left(-\widehat{s}\right)^\top\nabla\widehat{f}(\widehat{s})-\frac{1}{2}\left(-\widehat{s}\right)^\top\nabla^2\widehat{f}(\widehat{s})\left(-\widehat{s}\right).
        \end{equation*}
        According to Lemma \ref{lem:Taylorremeb}, the above two terms have upper bounds
        \begin{equation*}
            \left|\mathrm{Rem}_i\right|\le\frac{1}{6}L_{\nabla^2 f}\left\|r_i-\widehat{s}\right\|^3~~~\text{and}~~~ \left|\mathrm{Rem}_0\right|\le\frac{1}{6}L_{\nabla^2 f}\left\|-\widehat{s}\right\|^3.
        \end{equation*}
        Notice that $\|r_i-\widehat{s}\|\le 2\appdiam(R)$, so \eqref{eq:CSVeq3.16aftercanceling} gives
        \begin{align*}
            \left|r_i^\top E^g(\widehat{s})\right| &= \left|r_i^\top E^H(\widehat{s})\widehat{s} - \frac{1}{2}r_i^\top E^H(\widehat{s})r_i + \mathrm{Rem}_i - \mathrm{Rem}_0\right|\\
            &\le \left\|E^H(\widehat{s})\right\|\left\|\widehat{s}\right\|\left\|r_i\right\| + \frac{1}{2}\left\|E^H(\widehat{s})\right\|\left\|r_i\right\|^2 + \left|\mathrm{Rem}_i\right| +\left| \mathrm{Rem}_0\right|\\
            &\le \left(\left\|E^H(\widehat{s})\right\| + \frac{1}{2}\left\|E^H(\widehat{s})\right\|\right)\appdiam(R)^2 + \frac{1}{6}L_{\nabla^2 f}\left\|r_i-\widehat{s}\right\|^3 + \frac{1}{6}L_{\nabla^2 f}\left\|-\widehat{s}\right\|^3\\
            &\le L_{\nabla^2 f}\left(6p\left\|\widehat{R}^{-\top}\right\|^2 + \frac{3}{2}\right)\appdiam(R)^3 + \frac{1}{6}L_{\nabla^2 f}\left(8\appdiam(R)^3 +\appdiam(R)^3\right)\\
            &= 3L_{\nabla^2 f}\left(2p\left\|\widehat{R}^{-\top}\right\|^2+1\right)\appdiam(R)^3,
        \end{align*}
        for all $r_i\in[r_1\cdots r_p]$.  Therefore, we obtain the second error bound
        \begin{align*}
            \left\|Q^\top\nabla f(x^0+Q\widehat{s})-\nabla\widehat{m}(\widehat{s})\right\| &=\left\|E^g(\widehat{s})\right\|\\
            &= \left\|\widehat{R}^{-\top}\widehat{R}^\top E^g(\widehat{s})\right\|\\
            &\le \left\|\widehat{R}^{-\top}\right\|\left\|\widehat{R}^\top E^g(\widehat{s})\right\|\\
            &= \left\|\widehat{R}^{-\top}\right\|\frac{1}{\appdiam(R)}\sqrt{\sum\limits_{i=1}^p\left|r_i^\top E^g(\widehat{s})\right|^2}\\
            &\le 3\sqrt{p}L_{\nabla^2 f}\left\|\widehat{R}^{-\top}\right\|\left(2p\left\|\widehat{R}^{-\top}\right\|^2+1\right)\appdiam(R)^2.
        \end{align*}\
    
        For the third error bound, we regroup \eqref{eq:CSVeq3.16at0} to get
        \begin{align*}
            E^f(\widehat{s}) = \mathrm{Rem}_0 + \widehat{s}^\top E^g(\widehat{s}) - \frac{1}{2}\widehat{s}^\top E^H(\widehat{s})\widehat{s}.
        \end{align*}
        Applying the error bounds for $\left\|E^g(\widehat{s})\right\|$ and $\left\|E^H(\widehat{s})\right\|$, we obtain
        \begin{align*}
            \left|E^f(\widehat{s})\right| &\le \left|\mathrm{Rem}_0\right| + \left\|E^g(\widehat{s})\right\|\left\|\widehat{s}\right\| + \frac{1}{2}\left\|E^H(\widehat{s})\right\|\left\|\widehat{s}\right\|^2\\
            &= \left(\sqrt{p}L_{\nabla^2 f}\left\|\widehat{R}^{-\top}\right\|\left(6p\left\|\widehat{R}^{-\top}\right\|^2+2\sqrt{p}\left\|\widehat{R}^{-\top}\right\|+3\right) + \frac{2}{3}L_{\nabla^2 f}\right)\appdiam(R)^3.
        \end{align*}

        $\hfill\qed$
    \end{proof}

    Now we show that each $\widehat{m}_k$ belongs to a class of $Q_k$-fully quadratic models of $f$ at $x_k$ parameterized by $\Delta_k\in(0,\Delta_{\max}]$ and the constants $\kappa_{ef},\kappa_{eg}$, and $\kappa_{eh}$ are independent of $k$.   To do this, we need to find the relation between $\appdiam(R_k)$ and $\Delta_k$, and show that the coefficients of $\Delta_k$ and $\Delta_k^2$ in the new error bounds do not depend on $k$.
    
    We first note that $D_k=[d_1\cdots d_p]=Q_kR_k=[Q_kr_1\cdots Q_kr_p]$, so $$\appdiam(R_k)=\max\limits_{1\le i\le p}\left\|r_i\right\|=\max\limits_{1\le i\le p}\left\|Q_kr_i\right\|=\max\limits_{1\le i\le p}\left\|d_i\right\|.$$
    Notice that $D_k$ is constructed in the $(k-1)$-st iteration using Algorithms \ref{alg:mainrm} to \ref{alg:dirngen}.  Hence, any directions with a length larger than $\epsilon_{\mathrm{rad}}\Delta_{k}$ are removed and all new directions have a length equal to $\Delta_k$.  Therefore, we have $\appdiam(R_k)\le\epsilon_{\mathrm{rad}}\Delta_{k}\le\epsilon_{\mathrm{rad}}\Delta_{\max}$.  Since each direction matrix $D_k$ in $\myalg$ contains at least one new direction created by Algorithm \ref{alg:dirngen}, we also have $\Delta_k\le\appdiam(R_k)$.
    
    Finally to prove the following theorem, we also need the result of Subsection \ref{subsec:samplesetgeometry} that $\|D_k^\dagger\|\le\max\{1\slash\epsilon_{\mathrm{geo}}, 1\slash\Delta_{\min}\}$ (inequality~\eqref{ineq:ebDinverse}).
    
    \begin{theo}\label{thm:fullyquadconsts}
        Suppose $f\in\mathcal{C}^{2+}$ with Lipschitz constant $L_{\nabla^2 f}$.  Suppose $D_k\in\mathbb{R}^{n\times p}$ has full-column rank and let $x_k\in\mathbb{R}^n$.  Suppose the $QR$-factorization of $D_k$ is $D_k=Q_kR_k$, where $Q_k\in\mathbb{R}^{n\times p}$ and $R_k\in\mathbb{R}^{p\times p}$.  Let $\widehat{f}(\widehat{s})=f(x_k+Q_k\widehat{s})$.  Then $\widehat{m}_k$ belongs to a class of $Q_k$-fully quadratic models of $f$ at $x_k$ parameterized by $\Delta_k\in(0,\Delta_{\max}]$ with the following constants (which do not depend on~$k$)
        \begin{align*}
            \kappa_{ef} &= \left(\sqrt{p}L_{\nabla^2 f}M_{\widehat{R}^{-\top}}\left(6pM_{\widehat{R}^{-\top}}^2+2\sqrt{p}M_{\widehat{R}^{-\top}}+3\right) + \frac{2}{3}L_{\nabla^2 f}\right)\epsilon_{\mathrm{rad}}^3,\\
            \kappa_{eg} &= 3\sqrt{p}L_{\nabla^2 f}M_{\widehat{R}^{-\top}}\left(2pM_{\widehat{R}^{-\top}}^2+1\right)\epsilon_{\mathrm{rad}}^2,\\
            \kappa_{eh} &= L_{\nabla^2 f}\left(4pM_{\widehat{R}^{-\top}}^2+1\right)\epsilon_{\mathrm{rad}},
        \end{align*}
        where $M_{\widehat{R}^{-\top}}=\max\{1\slash\epsilon_{\mathrm{geo}}, 1\slash\Delta_{\min}\}\epsilon_{\mathrm{rad}}\Delta_{\max}$.
    \end{theo}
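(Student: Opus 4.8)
The plan is to feed the three raw error bounds of Theorem~\ref{thm:modelebs} (applied with $D=D_k$, $x^0=x_k$) into the definition of a class of $Q_k$-fully quadratic models, after rewriting the two quantities appearing there, namely $\|\widehat{R}_k^{-\top}\|$ and $\appdiam(R_k)$, in terms of the algorithmic quantities $\|D_k^\dagger\|$ and $\Delta_k$ that are already controlled. First I would check the domain: Theorem~\ref{thm:modelebs} delivers its bounds for every $\widehat{s}$ with $\|\widehat{s}\|\le\appdiam(R_k)$, while the $Q_k$-fully quadratic definition only asks for them on $\|\widehat{s}\|\le\Delta_k$. Since the discussion preceding this theorem established $\Delta_k\le\appdiam(R_k)\le\epsilon_{\mathrm{rad}}\Delta_k$, any $\widehat{s}$ with $\|\widehat{s}\|\le\Delta_k$ is admissible, so the domain requirement is met with room to spare.

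Next I would convert $\|\widehat{R}_k^{-\top}\|$ into $\|D_k^\dagger\|$. Because $Q_k$ has orthonormal columns, left-multiplication by $Q_k$ preserves the Euclidean norm on $\mathbb{R}^p$, so $D_k^\top D_k=R_k^\top Q_k^\top Q_k R_k=R_k^\top R_k$ and hence $\sigma_{\min}(D_k)=\sigma_{\min}(R_k)$. As $D_k$ has full-column rank, this gives $\|D_k^\dagger\|=1\slash\sigma_{\min}(D_k)=1\slash\sigma_{\min}(R_k)=\|R_k^{-1}\|$. Since $\widehat{R}_k=R_k\slash\appdiam(R_k)$, so that $\widehat{R}_k^{-1}=\appdiam(R_k)R_k^{-1}$ and $\|\widehat{R}_k^{-\top}\|=\|\widehat{R}_k^{-1}\|$, I conclude the clean identity
\begin{equation*}
    \left\|\widehat{R}_k^{-\top}\right\|=\appdiam(R_k)\left\|D_k^\dagger\right\|.
\end{equation*}
Combining $\appdiam(R_k)\le\epsilon_{\mathrm{rad}}\Delta_k\le\epsilon_{\mathrm{rad}}\Delta_{\max}$ with the sample-set bound~\eqref{ineq:ebDinverse}, $\|D_k^\dagger\|\le\max\{1\slash\epsilon_{\mathrm{geo}},1\slash\Delta_{\min}\}$, yields $\|\widehat{R}_k^{-\top}\|\le\max\{1\slash\epsilon_{\mathrm{geo}},1\slash\Delta_{\min}\}\epsilon_{\mathrm{rad}}\Delta_{\max}=M_{\widehat{R}^{-\top}}$, exactly the quantity named in the statement.

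The final step is substitution. Each of the three bounds in Theorem~\ref{thm:modelebs} is a polynomial in $\|\widehat{R}^{-\top}\|$ and $\appdiam(R)$ with strictly positive coefficients, hence nondecreasing in both arguments over the nonnegative range. I would therefore replace $\|\widehat{R}_k^{-\top}\|$ by its upper bound $M_{\widehat{R}^{-\top}}$ and each power $\appdiam(R_k)^j$ by $(\epsilon_{\mathrm{rad}}\Delta_k)^j=\epsilon_{\mathrm{rad}}^j\Delta_k^j$ for $j=1,2,3$; both replacements preserve the inequality. The Hessian bound then becomes $L_{\nabla^2 f}(4pM_{\widehat{R}^{-\top}}^2+1)\epsilon_{\mathrm{rad}}\Delta_k=\kappa_{eh}\Delta_k$, the gradient bound becomes $\kappa_{eg}\Delta_k^2$, and the function-value bound becomes $\kappa_{ef}\Delta_k^3$, matching the stated constants verbatim. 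Since $M_{\widehat{R}^{-\top}}$, $\epsilon_{\mathrm{rad}}$, $p$, and $L_{\nabla^2 f}$ carry no dependence on $k$, neither do $\kappa_{ef},\kappa_{eg},\kappa_{eh}$, as claimed. The only genuinely non-mechanical point is the norm identity $\|D_k^\dagger\|=\|R_k^{-1}\|$ of the second paragraph; the remaining monotonicity and bookkeeping steps are routine, so I do not expect a serious obstacle.
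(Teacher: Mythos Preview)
Your proposal is correct and follows essentially the same route as the paper: bound $\|\widehat{R}_k^{-\top}\|$ by $M_{\widehat{R}^{-\top}}$ via the identity linking $R_k^{-1}$ to $D_k^\dagger$ together with~\eqref{ineq:ebDinverse} and $\appdiam(R_k)\le\epsilon_{\mathrm{rad}}\Delta_{\max}$, then substitute into Theorem~\ref{thm:modelebs} using $\appdiam(R_k)\le\epsilon_{\mathrm{rad}}\Delta_k$. The only cosmetic difference is that the paper derives $\|R_k^{-1}\|\le\|D_k^\dagger\|$ from the explicit formula $R_k^{-1}=D_k^\dagger Q_k$, whereas you obtain the (equivalent) equality $\|R_k^{-1}\|=\|D_k^\dagger\|$ via $\sigma_{\min}(D_k)=\sigma_{\min}(R_k)$.
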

    \begin{proof}
        Since $D_k=Q_kR_k$ has full-column rank, we have $D_k^\dagger=R_k^{-1}Q_k^\top$, that is, $R_k^{-1}=D_k^\dagger Q_k$.  Therefore,
        \begin{equation*}
            \left\|\widehat{R}_k^{-\top}\right\| = \left\|R_k^{-1}\right\|\appdiam(R_k) = \left\|D_k^\dagger Q_k\right\|\appdiam(R_k) \le \left\|D_k^\dagger\right\|\appdiam(R_k) \le M_{\widehat{R}^{-\top}}.
        \end{equation*}
        
        Notice that $\Delta_k\le\appdiam(R_k)\le\epsilon_{\mathrm{rad}}\Delta_k\le\epsilon_{\mathrm{rad}}\Delta_{\max}$. Applying all the above inequalities to Theorem~\ref{thm:modelebs}, we see that $\widehat{m}_k(\widehat{s})$ belongs to a class of $Q_k$-fully quadratic models of $f$ at $x_k$ parameterized by $\Delta_k\in(0,\Delta_{\max}]$ with constants $\kappa_{ef},\kappa_{eg}$, and $\kappa_{eh}$.

        $\hfill\qed$
    \end{proof}

    Theorem \ref{thm:fullyquadconsts} implies that $\widehat{m}_k$ also belongs to a class of $Q_k$-fully linear models of $f$ at $x_k$ parameterized by $\Delta_k\in(0,\Delta_{\max}]$ with constants $\kappa_{ef}$ and $\kappa_{eg}$.  As we explained at the beginning of this subsection, we will only use this fact in the convergence analysis of $\myalg$ in Section~\ref{sec:converg}.

    \subsection{Subspace quality}\label{subsec:subspaceQual}
    In order to prove the convergence of our algorithm, in addition to model accuracy, we also need our subspaces to have a certain good quality, which we now describe.  Inspired by \cite{cartis2023scalable}, we introduce the notion of $\alpha$-well aligned matrices.
    
    \begin{defini}
        Given $f\in\mathcal{C}^1,x\in\mathbb{R}^n$, and $\alpha\in (0,1)$, we say that $A\in\mathbb{R}^{n\times z}$ is $\alpha$-well aligned for $f$ at $x$ if $$\left\|A^\top \nabla f(x)\right\|\ge\alpha\left\|\nabla f(x)\right\|.$$
    \end{defini}

    In this subsection, we show that there exists $\alpha_D \in (0,1)$ independent of $k$ such that with nonzero probability each $D_k\in\mathbb{R}^{n\times p}$ constructed in $\myalg$ is $\alpha_D$-well aligned for $f$ at $x_k$. We begin by recalling one method to generate $\alpha$-well aligned matrices with a certain probability from~\cite{dzahini2022stochastic}.
    \begin{theo}\label{thm:constructwellaligned}
        Suppose $\alpha,\delta_S\in (0,1)$ and $z\ge 4(1-\alpha)^{-2}\ln(1\slash\delta_S)$.  Let $A\in\mathbb{R}^{n\times z}$ be a random matrix such that $A_{ij}\sim\mathcal{N}(0,1\slash z)$.  Then for any deterministic vector $v\in\mathbb{R}^n$, $$\mathbb{P}\left[\left\|A^\top v\right\|\ge\alpha\left\|v\right\|\right]\ge 1-\delta_S.$$
        In particular, given $f\in\mathcal{C}^1$ and $x\in\mathbb{R}^n$, $A$ is $\alpha$-well aligned for $f$ at $x$ with probability at least $1-\delta_S$, i.e., $$\mathbb{P}\left[\left\|A^\top \nabla f(x)\right\|\ge\alpha\left\|\nabla f(x)\right\|\right]\ge 1-\delta_S.$$
    \end{theo}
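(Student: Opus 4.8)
The plan is to reduce the statement to a one-sided concentration inequality for a chi-squared random variable. For $v=0$ the inequality holds trivially, so assume $v\neq 0$, and by homogeneity (both $\|A^\top v\|$ and $\alpha\|v\|$ scale linearly in $\|v\|$) I would take $\|v\|=1$. First I would identify the distribution of the random vector $A^\top v\in\mathbb{R}^z$. Its $j$-th coordinate is $(A^\top v)_j=\sum_{i=1}^n A_{ij}v_i$, a linear combination of the independent $\mathcal{N}(0,1/z)$ entries of the $j$-th column of $A$, so $(A^\top v)_j\sim\mathcal{N}(0,\|v\|^2/z)$. Since distinct coordinates involve disjoint, hence independent, entries of $A$, the coordinates $(A^\top v)_1,\dots,(A^\top v)_z$ are mutually independent. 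Consequently, setting $X=(z/\|v\|^2)\|A^\top v\|^2=\sum_{j=1}^z\bigl(\sqrt{z}\,(A^\top v)_j/\|v\|\bigr)^2$, I obtain $X\sim\chi^2_z$.

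Next I would rewrite the target event in terms of $X$. The event $\{\|A^\top v\|\ge\alpha\|v\|\}$ is exactly $\{X\ge\alpha^2 z\}$, and because $\alpha^2<1=\mathbb{E}[X]/z$, its complement $\{X<\alpha^2 z\}$ is a lower-tail deviation below the mean. I would control this using the Laurent--Massart lower-tail bound: for $X\sim\chi^2_z$ and any $t>0$, $\mathbb{P}[X\le z-2\sqrt{zt}]\le e^{-t}$. Choosing $t$ so that $z-2\sqrt{zt}=\alpha^2 z$, i.e.\ $t=z(1-\alpha^2)^2/4$, gives $\mathbb{P}[X\le\alpha^2 z]\le\exp\bigl(-z(1-\alpha^2)^2/4\bigr)$.

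To finish, I would check that the hypothesis on $z$ forces this bound to be at most $\delta_S$. The requirement $\exp\bigl(-z(1-\alpha^2)^2/4\bigr)\le\delta_S$ is equivalent to $z\ge 4(1-\alpha^2)^{-2}\ln(1/\delta_S)$, and the assumed bound $z\ge 4(1-\alpha)^{-2}\ln(1/\delta_S)$ suffices because $1-\alpha^2=(1-\alpha)(1+\alpha)\ge 1-\alpha$ yields $(1-\alpha^2)^{-2}\le(1-\alpha)^{-2}$. Hence $\mathbb{P}[\|A^\top v\|\ge\alpha\|v\|]=1-\mathbb{P}[X<\alpha^2 z]\ge 1-\delta_S$. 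The ``in particular'' claim is then immediate: for fixed $f\in\mathcal{C}^1$ and $x\in\mathbb{R}^n$, the gradient $\nabla f(x)$ is a deterministic vector, so applying the first part with $v=\nabla f(x)$ shows that $A$ is $\alpha$-well aligned for $f$ at $x$ with probability at least $1-\delta_S$.

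I expect the main (and essentially only) obstacle to be invoking the correct one-sided chi-squared concentration inequality and performing the substitution cleanly; the slightly delicate point is the gap between the exponent $(1-\alpha^2)^{-2}$ that the computation naturally produces and the $(1-\alpha)^{-2}$ appearing in the hypothesis, which is bridged by the elementary inequality $1-\alpha^2\ge 1-\alpha$. Everything else --- identifying the chi-squared distribution and the homogeneity reduction --- is routine.
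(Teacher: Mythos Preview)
Your argument is correct. The reduction to a chi-squared lower-tail bound via $X=z\|A^\top v\|^2/\|v\|^2\sim\chi^2_z$ is exactly right (under the standard assumption, implicit in the statement, that the entries $A_{ij}$ are i.i.d.), the Laurent--Massart inequality is applied correctly, and the bridge from $(1-\alpha^2)^{-2}$ to $(1-\alpha)^{-2}$ via $1-\alpha^2\ge 1-\alpha$ is valid.

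The paper itself does not prove this theorem; it simply cites \cite[Theorem~3.1, Corollary~3.1]{dzahini2022stochastic}. Your proposal therefore supplies a self-contained proof where the paper defers to the literature. The argument you give is precisely the standard Johnson--Lindenstrauss-type calculation that underlies the cited result, so there is no substantive methodological difference---you are reconstructing the proof that the reference presumably contains. The benefit of your write-up is that it makes the role of the hypothesis $z\ge 4(1-\alpha)^{-2}\ln(1/\delta_S)$ transparent and shows explicitly where the slack between $(1-\alpha)^{-2}$ and $(1-\alpha^2)^{-2}$ is absorbed.
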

    \begin{proof}
        See \cite[Theorem 3.1, Corollary 3.1]{dzahini2022stochastic}.

        $\hfill\qed$
    \end{proof}

    In order to save function evaluations, in each iteration of $\myalg$, only part of the matrix $D_k$ is updated.  As we explained in Subsection \ref{subsec:samplesetgeometry}, this is done in order to maintain good geometry of the sample set.  The updated matrix, generated according to Algorithm \ref{alg:dirngen}, comes from the $QR$-factorization of a random matrix $A$ having the form described in Theorem \ref{thm:constructwellaligned}.  Thus we now explain how $QR$-factorization influences $\alpha$-well alignedness.  
    
    The next lemma shows that, if matrix $A$ is $\alpha$-well aligned for $f$ at $x$ and has full-column rank, then the $Q$ matrix from the $QR$-factorization is $(\alpha\slash\|A\|)$-well aligned for $f$ at $x$.
    \begin{lemm}\label{lem:wellalignedQR}
        Suppose $f\in\mathcal{C}^1$ and let $x\in\mathbb{R}^n$.  Let $\alpha\in(0,1)$ and suppose that $A\in\mathbb{R}^{n\times z}$ is $\alpha$-well aligned for $f$ at $x$ and has full-column rank.  Let the $QR$-factorization of $A$ be $A=QR$, where $Q\in\mathbb{R}^{n\times z}$ and $R\in\mathbb{R}^{z\times z}$.  Then $Q$ is $(\alpha\slash\|A\|)$-well aligned for $f$ at $x$.  
    \end{lemm}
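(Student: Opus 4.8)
The plan is to exploit the factorization $A = QR$ directly, reducing the claim to a submultiplicativity estimate combined with the fact that left-multiplication by a matrix with orthonormal columns preserves the operator norm. Throughout, write $g = \nabla f(x)$ for brevity, so that the hypothesis reads $\|A^\top g\| \ge \alpha \|g\|$ and the goal is $\|Q^\top g\| \ge (\alpha/\|A\|)\|g\|$.

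First I would transpose the factorization. Since $A = QR$, we have $A^\top = R^\top Q^\top$, hence
\begin{equation*}
    \|A^\top g\| = \|R^\top Q^\top g\| \le \|R^\top\|\,\|Q^\top g\| = \|R\|\,\|Q^\top g\|,
\end{equation*}
using submultiplicativity of the induced norm and $\|R^\top\| = \|R\|$. Second, I would identify $\|R\|$ with $\|A\|$: because $Q$ has orthonormal columns, $Q^\top Q = I_z$, so for every $y$ we have $\|QRy\|^2 = (Ry)^\top Q^\top Q (Ry) = \|Ry\|^2$, whence $\|A\| = \|QR\| = \|R\|$. Substituting this into the previous display yields
\begin{equation*}
    \|A^\top g\| \le \|A\|\,\|Q^\top g\|.
\end{equation*}

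Finally I would chain this with the $\alpha$-well-alignedness hypothesis $\alpha\|g\| \le \|A^\top g\|$ to obtain $\alpha\|g\| \le \|A\|\,\|Q^\top g\|$, and since $A$ has full-column rank we have $\|A\| > 0$, so dividing gives $\|Q^\top g\| \ge (\alpha/\|A\|)\|g\|$, which is exactly the assertion that $Q$ is $(\alpha/\|A\|)$-well aligned for $f$ at $x$.

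There is no genuinely hard step here; the proof is short and the only point requiring a moment of care is getting the inequality in the correct direction, namely bounding $\|A^\top g\|$ \emph{above} by $\|A\|\,\|Q^\top g\|$ rather than the reverse, together with the observation that the norm-preservation identity $\|A\| = \|R\|$ is what converts the spurious factor $\|R\|$ into the advertised $\|A\|$. I would also remark that since $\alpha \in (0,1)$ and $\|A\| \ge 1$ whenever $A$ is generated as in Theorem~\ref{thm:constructwellaligned} (so that the guarantee is nontrivial), the resulting alignment constant $\alpha/\|A\|$ indeed lies in $(0,1)$, matching the definition.
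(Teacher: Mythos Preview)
Your proof is correct and follows essentially the same approach as the paper: both use $A^\top = R^\top Q^\top$, the submultiplicativity bound $\|A^\top g\|\le\|R^\top\|\|Q^\top g\|$, and the identity $\|R\|=\|A\|$ coming from the orthonormality of the columns of $Q$, then divide through by $\|A\|\neq 0$. Your final remark that $\|A\|\ge 1$ whenever $A$ is generated as in Theorem~\ref{thm:constructwellaligned} is not actually guaranteed (the condition $M_A\ge 1$ is an upper bound, not a lower bound), but this side comment is not needed for the lemma itself.
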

    \begin{proof}
        Since $A$ has full-column rank, we have $\|R\|=\|QR\|=\|A\|\neq 0$.  Hence,  
        \begin{equation*}
            \alpha\left\|\nabla f(x)\right\| \le \left\|A^\top \nabla f(x)\right\| \le \left\|R^\top\right\|\left\|Q^\top \nabla f(x)\right\| = \left\|A\right\|\left\|Q^\top \nabla f(x)\right\|,
        \end{equation*}
        which implies
        \begin{align*}
            \left\|Q^\top \nabla f(x)\right\| \ge \frac{\alpha}{\left\|A\right\|}\left\|\nabla f(x)\right\|.
        \end{align*}

        $\hfill\qed$
    \end{proof}
    
    Now we prove that there exists a constant $\alpha_D\in(0,1)$ independent of $k$ such that each $D_k\in\mathbb{R}^{n\times p}$ constructed in $\myalg$ is $\alpha_D$-well aligned for $f$ at $x_k$ with probability at least $1-\delta_S$.
    
    We denote the randomly generated part of $D_k$ by $D_k^R\in\mathbb{R}^{n\times q}$, where $p_{\mathrm{rand}}\le q\le p$.  This part is constructed by Algorithm \ref{alg:dirngen}.  The unchanged part of $D_k$ is denoted by $D_k^U\in\mathbb{R}^{n\times (p-q)}$.  Hence, the matrix $D_k$ has the form $D_k=[D_k^U~D_k^R]$.

    We require the following lemma, which gives a uniform bound on the norm of $D_k^U$.
    \begin{lemm}\label{lem:uniformlybdDU}
        Let $M_{D^U}=\epsilon_{\mathrm{rad}}\Delta_{\max}\sqrt{p}$.  Then $D^U_k$ satisfies $\|D^U_k\|\le M_{D^U}$ for all $k$.
    \end{lemm}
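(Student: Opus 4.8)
The plan is to bound the spectral norm $\|D^U_k\|$ by controlling two quantities that are directly constrained by the construction in Algorithm \ref{alg:mainrm}: the number of columns of $D^U_k$, and the length of each individual column. Once both are in hand, a standard column-wise norm estimate yields the claim, and the bound is automatically uniform in $k$ because neither controlling quantity depends on $k$.

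First I would record that $D^U_k$ has at most $p$ columns. Algorithm \ref{alg:mainrm} begins by selecting exactly $p$ directions and thereafter only \emph{removes} directions, so the matrix $D^U_k$ it produces has between $0$ and $p$ columns; the empty case is trivial since then $\|D^U_k\|=0\le M_{D^U}$. Next I would bound the length of each retained column. When $D^U_k$ is created (in the previous iteration, with radius parameter $\Delta_k$ playing the role of $\Delta_{k+1}$ in the algorithm), the third line of Algorithm \ref{alg:mainrm} deletes every direction $d$ with $\|d\|>\epsilon_{\mathrm{rad}}\Delta_k$. Hence every surviving column $d$ satisfies $\|d\|\le\epsilon_{\mathrm{rad}}\Delta_k\le\epsilon_{\mathrm{rad}}\Delta_{\max}$, where the final inequality uses that the trust-region radius never exceeds $\Delta_{\max}$ (the input assumption $\Delta_0\le\Delta_{\max}$ together with the update rule, whose increase branch caps at $\Delta_{\max}$).

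Finally I would combine these two facts. Writing $D^U_k=[d_1\cdots d_m]$ with $m\le p$ and using the induced norm from the notation section, for any unit vector $y\in\mathbb{R}^m$ one has $\|D^U_k y\|=\|\sum_i y_i d_i\|\le(\max_i\|d_i\|)\sum_i|y_i|\le\epsilon_{\mathrm{rad}}\Delta_{\max}\,\sqrt{m}$, by the triangle inequality and the Cauchy--Schwarz bound $\sum_i|y_i|\le\sqrt{m}\,\|y\|$. Taking the maximum over unit $y$ gives
$$\|D^U_k\|\le\sqrt{m}\,\epsilon_{\mathrm{rad}}\Delta_{\max}\le\sqrt{p}\,\epsilon_{\mathrm{rad}}\Delta_{\max}=M_{D^U},$$
which is the desired bound.

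I do not anticipate a genuine obstacle; the result is an immediate consequence of the explicit pruning step in Algorithm \ref{alg:mainrm}. The only points requiring care are the index bookkeeping---confirming that the radius governing the columns of $D^U_k$ is $\Delta_k$, not $\Delta_{k-1}$ or $\Delta_{k+1}$---and verifying once and for all that $\Delta_k\le\Delta_{\max}$ holds for every $k$, since this is what upgrades the per-column estimate to a genuinely $k$-independent bound.
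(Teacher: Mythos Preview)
Your proposal is correct and follows essentially the same argument as the paper: bound each column of $D^U_k$ by $\epsilon_{\mathrm{rad}}\Delta_k\le\epsilon_{\mathrm{rad}}\Delta_{\max}$ via the pruning step in Algorithm~\ref{alg:mainrm}, then use the column-wise estimate $\|D^U_ky\|\le(\max_i\|d_i\|)\sum_i|y_i|\le(\max_i\|d_i\|)\sqrt{m}$ over unit $y$. The paper phrases the last step as the ``AM--QM inequality'' and writes the column count as $p-q$ rather than your $m$, but the content is identical.
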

    \begin{proof}
        Notice that $D_k^U=[d_1^U\cdots d_{p-q}^U]\in\mathbb{R}^{n\times (p-q)}$ is constructed by Algorithm \ref{alg:mainrm} in the $(k-1)$-st iteration.  We have $\|d_i^U\|\le\epsilon_{\mathrm{rad}}\Delta_k$ for all $i=1,...,p-q$.  Consider $y=[y_1,...,y_{p-q}]^\top\in\mathbb{R}^{p-q}$.  Using the AM-QM inequality, we get
        \begin{align*}
            \left\|D_k^U\right\|=\max\limits_{\|y\|=1}\left\|\sum\limits_{i=1}^{p-q}y_id_i^U\right\|\le\max\limits_{1\le i\le p-q}\left\|d_i^U\right\|\max\limits_{\|y\|=1}\sum\limits_{i=1}^{p-q}\left|y_i\right|\le\epsilon_{\mathrm{rad}}\Delta_k\sqrt{p-q}\le M_{D^U}.
        \end{align*}

        $\hfill\qed$
    \end{proof}
    
    \begin{theo}\label{thm:alphaDwellalignedD}
        Suppose $f\in\mathcal{C}^1$.  Suppose $\alpha,\delta_S\in (0,1)$ and $q\ge 4(1-\alpha)^{-2}\ln(1\slash\delta_S)$.  Suppose $D_k^R$ is constructed by Algorithm \ref{alg:dirngen} with $\Delta=\Delta_k$ and $M_A\ge 1$.  Let $$\alpha_D=\min\left\{\frac{\epsilon_{\mathrm{geo}}^2}{M_{D^U}},\frac{\alpha\Delta_{\min}}{2M_{A}}\right\},$$ where $M_{D^U}=\epsilon_{\mathrm{rad}}\Delta_{\max}\sqrt{p}$.  Then $D_k=[D_k^U~D_k^R]$ is $\alpha_D$-well aligned for $f$ at $x_k$ with probability at least $1-\delta_S$.  
    \end{theo}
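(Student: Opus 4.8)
The plan is to split the gradient $\nabla f(x_k)$ into its component in the range of the retained block $D_k^U$ and its orthogonal component, and then to argue that the retained columns control the first component deterministically while the freshly sampled block $D_k^R$ controls the second component with high probability. Write $g=\nabla f(x_k)$ and let $P$ denote the orthogonal projection onto the range of $D_k^U$; decompose $g=g_\parallel+g_\perp$ with $g_\parallel=Pg$ and $g_\perp=(I-P)g$ (when $D_k^U$ is empty we have $g=g_\perp$ and only the random block below is needed). Since $D_k^\top$ stacks $(D_k^U)^\top$ on top of $(D_k^R)^\top$, we have $\|D_k^\top g\|^2=\|(D_k^U)^\top g\|^2+\|(D_k^R)^\top g\|^2$, so it suffices to lower bound each block separately and combine.

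First I would handle the deterministic block. Because $g_\perp$ is orthogonal to the range of $D_k^U$, we get $(D_k^U)^\top g=(D_k^U)^\top g_\parallel$; writing $g_\parallel=D_k^U w$ gives $\|(D_k^U)^\top g\|=\|(D_k^U)^\top D_k^U w\|\ge\sigma_{\min}(D_k^U)^2\|w\|$. Using $\sigma_{\min}(D_k^U)\ge\epsilon_{\mathrm{geo}}$ (guaranteed by Algorithm \ref{alg:mainrm}), together with $\|w\|\ge\|g_\parallel\|/\|D_k^U\|$ and the uniform bound $\|D_k^U\|\le M_{D^U}$ from Lemma \ref{lem:uniformlybdDU}, this yields $\|(D_k^U)^\top g\|\ge(\epsilon_{\mathrm{geo}}^2/M_{D^U})\|g_\parallel\|$, which is where the first term of $\alpha_D$ originates.

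Next I would handle the random block, which is the crux. Recall $D_k^R=\Delta_k\widetilde Q$ comes from the $QR$-factorization $\widetilde A=\widetilde Q\widetilde R$ of $\widetilde A=(I-QQ^\top)A$, where $A$ is the fresh Gaussian matrix of Algorithm \ref{alg:dirngen} and $Q$ is an orthonormal basis of the range of $D_k^U$. Conditioning on the history, so that $x_k$, $D_k^U$, and hence the vector $g_\perp$ are fixed while $A$ is freshly sampled, I would apply Theorem \ref{thm:constructwellaligned} with $v=g_\perp$ to get $\|A^\top g_\perp\|\ge\alpha\|g_\perp\|$ with probability at least $1-\delta_S$; here the hypothesis $q\ge 4(1-\alpha)^{-2}\ln(1/\delta_S)$ is exactly what that theorem needs. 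The key identity is $\widetilde A^\top g=A^\top(I-QQ^\top)g=A^\top g_\perp$, so the same event gives $\|\widetilde A^\top g\|\ge\alpha\|g_\perp\|$. Transferring this to the orthonormal factor as in Lemma \ref{lem:wellalignedQR}, namely $\|\widetilde A^\top g\|\le\|\widetilde R^\top\|\,\|\widetilde Q^\top g\|=\|\widetilde A\|\,\|\widetilde Q^\top g\|$ with $\|\widetilde A\|\le\|I-QQ^\top\|\,\|A\|\le M_A$, gives $\|\widetilde Q^\top g\|\ge(\alpha/M_A)\|g_\perp\|$. Finally $\|(D_k^R)^\top g\|=\Delta_k\|\widetilde Q^\top g\|$ and $\Delta_k\ge\Delta_{\min}$ (maintained throughout the run of $\myalg$) produce $\|(D_k^R)^\top g\|\ge(\alpha\Delta_{\min}/M_A)\|g_\perp\|$, the source of the second term of $\alpha_D$.

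To finish, I would substitute both bounds into the block decomposition to obtain $\|D_k^\top g\|^2\ge(\epsilon_{\mathrm{geo}}^2/M_{D^U})^2\|g_\parallel\|^2+(\alpha\Delta_{\min}/M_A)^2\|g_\perp\|^2\ge\min\{\epsilon_{\mathrm{geo}}^2/M_{D^U},\,\alpha\Delta_{\min}/M_A\}^2(\|g_\parallel\|^2+\|g_\perp\|^2)$, and since $\|g_\parallel\|^2+\|g_\perp\|^2=\|g\|^2$ this gives $\|D_k^\top g\|\ge\alpha_D\|g\|$ on the high-probability event; the factor $\tfrac{1}{2}$ in the second term of $\alpha_D$ merely leaves room for a more conservative constant than this argument requires. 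I expect the main obstacle to be the probabilistic bookkeeping in the random block: one must argue that $g_\perp$ is deterministic relative to the fresh randomness in $A$ so that Theorem \ref{thm:constructwellaligned} applies with $v=g_\perp$, and one must carry the Gaussian tail bound through the projection $I-QQ^\top$ and the ensuing $QR$-factorization (where full-column rank of $\widetilde A$, holding almost surely, is needed). The deterministic block and the final combination are routine by comparison.
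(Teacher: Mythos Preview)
Your proof is correct and follows essentially the same approach as the paper: decompose $g$ orthogonally with respect to the range of $D_k^U$, bound the retained block deterministically via $\sigma_{\min}(D_k^U)\ge\epsilon_{\mathrm{geo}}$ and Lemma~\ref{lem:uniformlybdDU}, and bound the random block probabilistically by applying Theorem~\ref{thm:constructwellaligned} to $v=g_\perp$ and then transferring through the $QR$-factorization as in Lemma~\ref{lem:wellalignedQR}. The one notable difference is that you use the sharper estimate $\|I-QQ^\top\|\le 1$ (valid because $I-QQ^\top$ is an orthogonal projection), whereas the paper uses the cruder triangle inequality $\|I-QQ^\top\|\le\|I_n\|+\|QQ^\top\|=2$; this is exactly why your argument does not need the factor $\tfrac{1}{2}$ in the second term of $\alpha_D$, as you correctly observe.
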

    \begin{proof}
        Let $A$ be the matrix in Algorithm \ref{alg:dirngen} with $A_{ij}\sim\mathcal{N}(0,1\slash q)$, full-column rank, and $\|A\|\le M_{A}$.  For simplicity, we denote $g=\nabla f(x_k)$.
        
        {\bf Case \rom{1}:} Suppose that in Algorithm \ref{alg:dirngen} we set $\widetilde{A}=A$.  
        
        According to Theorem \ref{thm:constructwellaligned}, $A$ is $\alpha$-well aligned for $f$ at $x_k$ with probability at least $1-\delta_S$.  From Lemma \ref{lem:wellalignedQR}, we have $\widetilde{Q}$ is $(\alpha\slash\|A\|)$-well aligned for $f$ at $x_k$.  Notice that $D^R_k=\Delta_k\widetilde{Q}$ and $\|A\|\le M_{A}$.  Thus we obtain
        \begin{equation*}
            \left\|D_k^\top g\right\| \ge \left\|\left(D_k^R\right)^\top g\right\| = \Delta_k\left\|\widetilde{Q}^\top g\right\| \ge \frac{\alpha\Delta_k}{\|A\|}\left\|g\right\| \ge \frac{\alpha\Delta_{\min}}{M_{A}}\left\|g\right\|\ge\alpha_D\left\|g\right\|.
        \end{equation*}

        {\bf Case \rom{2}:} Suppose that in Algorithm \ref{alg:dirngen} we set $\widetilde{A}=A-QQ^\top A$, where $Q$ is the orthonormal basis for the subspace determined by $D^U_k$.

        Decompose $g$ into $g=g_Q + g_{Q^\perp}$, where $g_Q=QQ^\top g$ is the projection of $g$ onto the column space of $Q$ and $g_{Q^\perp}=(I_n-QQ^\top)g$ is the projection of $g$ onto the orthogonal complement of the column space of $Q$.  Hence, we have $Q^\top g = Q^\top g_Q$ and $(I_n-QQ^\top)^\top g=(I_n-QQ^\top)g=g_{Q^\perp}$. 

        Applying Theorem \ref{thm:constructwellaligned} with $v=g_{Q^\perp}$, we have $\|A^\top g_{Q^\perp}\|\ge\alpha\|g_{Q^\perp}\|$ with probability at least $1-\delta_S$.  Notice that $D^R_k=\Delta_k\widetilde{Q}$ and $\widetilde{A}=(I_n-QQ^\top)A=\widetilde{Q}\widetilde{R}$.  Thus we have $D^R_k=\Delta_k(I_n-QQ^\top)A\widetilde{R}^{-1}$ which gives
        \begin{align}\label{eq:DkTgexpression}
        \begin{split}
            \left\|D_k^\top g\right\| &= \sqrt{\left\|\left(D_k^U\right)^\top g\right\|^2 + \left\|\left(D_k^R\right)^\top g\right\|^2}\\
            &= \sqrt{\left\|\left(D_k^U\right)^\top g_Q\right\|^2 + \Delta_k^2\left\|\widetilde{R}^{-\top}A^\top(I_n-QQ^\top) g\right\|^2}\\
            &= \sqrt{\left\|\left(D^U_k\right)^\top g_Q\right\|^2 + \Delta_k^2\left\|\widetilde{R}^{-\top}A^\top g_{Q^\perp}\right\|^2}.
        \end{split}
        \end{align}

        Now we find lower bounds for each term under the square root symbol.  Since the column space of $Q$ is the same as the column space of $D^U_k$, there exists a vector $w\in\mathbb{R}^p$ such that $g_Q=D^U_k w$.  Hence,
        \begin{equation*}
            \left\|\left(D^U_k\right)^\top g_Q\right\| \ge \sigma_{\min}\left(\left(D^U_k\right)^\top D^U_k\right)\left\|w\right\| = \sigma_{\min}\left(D^U_k\right)^2\left\|w\right\| \ge \epsilon_{\mathrm{geo}}^2\left\|w\right\|,
        \end{equation*}
        where the first inequality comes from the fact that $\sigma_{\min}(M)=\min_{\|x\|=1}\|Mx\|$ for all square matrices $M$ (\cite[Theorem 3.1.2]{horn1991topics}) and the second inequality comes from Algorithm \ref{alg:mainrm} and the fact that $D^U_k$ must not be empty in this case (since $Q$ is specified).  Moreover, according to Lemma~\ref{lem:uniformlybdDU}, we have $\|g_Q\|\le\|D^U_k\|\|w\|\le M_{D^U}\|w\|$ and so $\|w\|\ge\|g_Q\|\slash M_{D^U}$.  Therefore,
        \begin{equation*}
            \left\|\left(D^U_k\right)^\top g_Q\right\| \ge \frac{\epsilon_{\mathrm{geo}}^2}{M_{D^U}}\left\|g_Q\right\|.
        \end{equation*}

        For the second term, we use the following inequality 
        \begin{equation*}
            \left\|\widetilde{R}^\top\right\| = \left\|\widetilde{R}\right\| = \left\|\widetilde{A}\right\| = \left\|(I_n-QQ^\top)A\right\| \le \left(\left\|I_n\right\|+\left\|QQ^\top\right\|\right)\left\|A\right\| \le 2M_{A}
        \end{equation*}
        to get
        \begin{equation*}
            \left\|\widetilde{R}^{-\top}A^\top g_{Q^\perp}\right\|=\frac{\left\|\widetilde{R}^\top\right\|\left\|\widetilde{R}^{-\top}A^\top g_{Q^\perp}\right\|}{\left\|\widetilde{R}^\top\right\|} \ge \frac{\left\|A^\top g_{Q^\perp}\right\|}{2M_{A}} \ge \frac{\alpha}{2M_{A}}\left\|g_{Q^\perp}\right\|.
        \end{equation*}

        Applying the above two lower bounds to \eqref{eq:DkTgexpression}, we have
        \begin{equation*}
            \left\|D_k^\top g\right\| \ge \sqrt{\frac{\epsilon_{\mathrm{geo}}^4}{\left(M_{D^U}\right)^2}\left\|g_Q\right\|^2 + \Delta_k^2\frac{\alpha^2}{4M_{A}^2}\left\|g_{Q^\perp}\right\|^2} \ge \alpha_D\sqrt{\left\|g_Q\right\|^2+\left\|g_{Q^\perp}\right\|^2} = \alpha_D\left\|g\right\|,
        \end{equation*}
        which completes the proof.

        $\hfill\qed$
    \end{proof}

\section{Convergence analysis}\label{sec:converg}
    In this section, we give the convergence analysis of the $\myalg$ algorithm.  Our proof follows a general framework in \cite{cartis2023scalable}.  However, since $\myalg$ uses different test conditions and a different class of models, our results are different from \cite{cartis2023scalable}. 

    We note that there exists $M_{D} > 0$ such that all $D_k$ satisfy $\|D_k\|\le M_{D}$.  To see this, suppose $D_k=[D_k^U~D_k^R]$ where $D_k^R$ is constructed by Algorithm \ref{alg:dirngen} with $\Delta=\Delta_k$.  Then we have $D_k^R=\Delta_k\widetilde{Q}$ where $\widetilde{Q}$ consists of orthonormal columns and so $\|D_k^R\|=\Delta_k$.  Hence,
    \begin{equation*}
        \left\|D_k\right\| \le \left\|D_k^U\right\| + \left\|D_k^R\right\| \le M_{D^U} + \Delta_k \le M_{D^U} + \Delta_{\max},
    \end{equation*}
    where $M_{D^U}$ is defined in Lemma \ref{lem:uniformlybdDU}.
    
    Suppose we have run $(K+1)$ iterations and the stopping condition is not triggered, i.e., $\Delta_k\ge\Delta_{\min}$ for all $k\in\mathcal{K}=\{0,...,K\}$.  We use the following subsets of $\mathcal{K}$:
    \begin{itemize}
        \item $\mathcal{A}:$ the subset of $\mathcal{K}$ where $D_k$ is $\alpha_D$-well aligned for $f$ at $x_k$.
        \item $\mathcal{A}^{\complement}$: the subset of $\mathcal{K}$ where $D_k$ is not $\alpha_D$-well aligned for $f$ at $x_k$.
        \item $\mathcal{C}:$ the subset of $\mathcal{K}$ where the criticality test condition is satisfied.
        \item $\mathcal{S}:$ the subset of $\mathcal{K}$ where the criticality test condition is not satisfied with $\rho_k\ge\eta_1$.
    \end{itemize}

    We begin our analysis by introducing a few lemmas following \cite{cartis2023scalable}.
    \begin{lemm}\label{lem:bdnormghat}
        Suppose $f\in\mathcal{C}^{2+}$ with Lipschitz constant $L_{\nabla^2 f}$.  Let $\epsilon>0$.  For all $k\in\mathcal{A}\backslash\mathcal{C}$ with $\|\nabla f(x_k)\|\ge\epsilon$, we have $$\left\|\nabla\widehat{m}_k(\mathbf{0})\right\|\ge\epsilon_g(\epsilon),$$
        where $$\epsilon_g(\epsilon)=\frac{\alpha_D\epsilon}{(\kappa_{eg}\mu+1)M_{D}}>0.$$
    \end{lemm}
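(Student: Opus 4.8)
The plan is to chain together the three defining features of an iteration $k \in \mathcal{A} \backslash \mathcal{C}$: the $\alpha_D$-well-alignedness of $D_k$, the $Q_k$-fully linear gradient error bound (available since each $\widehat{m}_k$ is $Q_k$-fully quadratic by Theorem \ref{thm:fullyquadconsts}), and the failure of the criticality test. The target inequality is linear in $\|\nabla\widehat{m}_k(\mathbf{0})\|$ on both sides, so the strategy is to derive a lower bound of the form $\|\nabla\widehat{m}_k(\mathbf{0})\| \ge (\text{constant})\,\epsilon - \kappa_{eg}\mu\|\nabla\widehat{m}_k(\mathbf{0})\|$ and then simply solve for $\|\nabla\widehat{m}_k(\mathbf{0})\|$.

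\emph{Step 1 (from $D_k$-alignedness to $Q_k$-alignedness).} Because $k\in\mathcal{A}$, we have $\|D_k^\top\nabla f(x_k)\| \ge \alpha_D\|\nabla f(x_k)\|$. Writing $D_k = Q_kR_k$ and using that $Q_k$ has orthonormal columns (so $\|R_k\| = \|Q_kR_k\| = \|D_k\| \le M_D$), I would bound $\|D_k^\top\nabla f(x_k)\| = \|R_k^\top Q_k^\top\nabla f(x_k)\| \le \|R_k\|\,\|Q_k^\top\nabla f(x_k)\| \le M_D\|Q_k^\top\nabla f(x_k)\|$. Combining this with the alignedness inequality and the hypothesis $\|\nabla f(x_k)\|\ge\epsilon$ yields $\|Q_k^\top\nabla f(x_k)\| \ge \alpha_D\epsilon/M_D$.

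\emph{Step 2 (insert the model error bound).} Evaluating the $Q_k$-fully linear gradient bound at $\widehat{s}=\mathbf{0}$ gives $\|Q_k^\top\nabla f(x_k) - \nabla\widehat{m}_k(\mathbf{0})\| \le \kappa_{eg}\Delta_k$, so by the reverse triangle inequality $\|\nabla\widehat{m}_k(\mathbf{0})\| \ge \alpha_D\epsilon/M_D - \kappa_{eg}\Delta_k$. \emph{Step 3 (use the criticality condition).} Since $k\notin\mathcal{C}$, the criticality test fails, which means $\Delta_k \le \mu\|\nabla\widehat{m}_k(\mathbf{0})\|$. Substituting this into the previous inequality produces $\|\nabla\widehat{m}_k(\mathbf{0})\| \ge \alpha_D\epsilon/M_D - \kappa_{eg}\mu\|\nabla\widehat{m}_k(\mathbf{0})\|$; collecting the model-gradient terms gives $(\kappa_{eg}\mu+1)\|\nabla\widehat{m}_k(\mathbf{0})\| \ge \alpha_D\epsilon/M_D$, which rearranges to exactly $\epsilon_g(\epsilon) = \alpha_D\epsilon/((\kappa_{eg}\mu+1)M_D)$. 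Positivity of $\epsilon_g(\epsilon)$ is immediate since all constants are positive.

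The only genuine subtlety is Step 1: the well-alignedness hypothesis is phrased for the full direction matrix $D_k$, whereas the accuracy machinery is phrased in terms of the orthonormal factor $Q_k$. Bridging the two through the $QR$-factorization is what costs a factor of $\|R_k\| \le M_D$, and this is precisely the source of the $M_D$ in the denominator of $\epsilon_g$. Everything else is a mechanical triangle-inequality estimate, so I expect no real difficulty beyond keeping the chain of inequalities pointed in the correct direction.
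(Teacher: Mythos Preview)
Your proposal is correct and follows essentially the same approach as the paper. The only cosmetic differences are that the paper invokes Lemma~\ref{lem:wellalignedQR} by name where you reprove it inline, and the paper writes the triangle inequality as an upper bound on $\|Q_k^\top\nabla f(x_k)\|$ rather than a lower bound on $\|\nabla\widehat{m}_k(\mathbf{0})\|$; these are equivalent rearrangements of the same chain of inequalities.
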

    \begin{proof}
        The criticality test condition is not satisfied, so we have $\Delta_k\le\mu\|\nabla\widehat{m}_k(\mathbf{0})\|$.  Notice that since $\widehat{m}_k$ belongs to a class of $Q_k$-fully linear models of $f$ at $x_k$, we get
        \begin{align*}
            \left\|Q_k^\top\nabla f(x_k)\right\| &\le \left\|Q_k^\top\nabla f(x_k)-\nabla\widehat{m}_k(\mathbf{0})\right\| + \left\|\nabla\widehat{m}_k(\mathbf{0})\right\|\\
            &\le \kappa_{eg}\Delta_k + \left\|\nabla\widehat{m}_k(\mathbf{0})\right\|\\
            &\le \left(\kappa_{eg}\mu+1\right)\left\|\nabla\widehat{m}_k(\mathbf{0})\right\|.
        \end{align*}

        Since $D_k$ is $\alpha_D$-well aligned for $f$ at $x_k$, according to Lemma \ref{lem:wellalignedQR}, $Q_k$ is $(\alpha_D\slash\|D_k\|)$-well aligned for $f$ at $x_k$, so
        \begin{equation*}
            \left\|Q_k^\top\nabla f(x_k)\right\| \ge \frac{\alpha_D}{\left\|D_k\right\|}\left\|\nabla f(x_k)\right\| \ge \frac{\alpha_D}{M_{D}}\left\|\nabla f(x_k)\right\| \ge \frac{\alpha_D}{M_{D}}\epsilon.
        \end{equation*}
        The result follows from combining the above inequalities.

        $\hfill\qed$
    \end{proof}

    For the remainder of this section, we make use of the following assumptions.
    \begin{assump}\label{ass:smoothness}
        Suppose $f\in\mathcal{C}^{2+}$ with Lipschitz constant $L_{\nabla^2 f}$. 
    \end{assump}
    \begin{assump}\label{ass:bdbelow}
        Suppose $f$ is bounded below by $f_{low}$.
    \end{assump}
    \begin{assump}\label{ass:bdmodelHess}
        Suppose $\|\nabla^2\widehat{m}_k\|\le\kappa_H$ for all $k$, where $\kappa_H\ge 1$ is independent of $k$.
    \end{assump}
    \begin{assump}\label{ass:CauchyDec}
        Suppose all solutions $\widehat{s}_k$ of the trust-region subproblem satisfy $$\widehat{m}_k(\mathbf{0})-\widehat{m}_k(\widehat{s}_k)\ge c_1\left\|\nabla\widehat{m}_k(\mathbf{0})\right\|\min\left(\Delta_k, \frac{\|\nabla\widehat{m}_k(\mathbf{0})\|}{\max\left\{\left\|\nabla^2\widehat{m}_k\right\|, 1\right\}}\right),$$ for some $c_1\in (0, 1\slash 2]$ independent of $k$.
    \end{assump}
    \begin{assump}\label{ass:subspaceQual}
        Suppose each $D_k$ is $\alpha_D$-well aligned for $f$ at $x_k$ with probability at least $1-\delta_S$, where $\alpha_D$ is independent of $k$.
    \end{assump}

    We remark that under Assumption \ref{ass:smoothness}, as noted after Theorem \ref{thm:fullyquadconsts}, we have each $\widehat{m}_k$ belongs to a class of $Q_k$-fully linear models of $f$ at $x_k$ parameterized by $\Delta_k\in(0,\Delta_{\max}]$ with constants $\kappa_{ef}$ and $\kappa_{eg}$.  Assumption \ref{ass:bdbelow} is standard for optimization.  Using Assumption \ref{ass:smoothness}, Assumption \ref{ass:bdmodelHess} could alternatively be established by assuming $f$ has compact level sets.  Assumption \ref{ass:CauchyDec} is always achievable by finding a sufficiently high-quality solution to the trust-region subproblem \cite{audet2017derivative,conn2009introduction}.  Assumption \ref{ass:subspaceQual} can be satisfied by setting $p_{\mathrm{rand}}$, which is less than or equal to $q$, according to the lower bound on $q$ required by Theorem \ref{thm:alphaDwellalignedD}.

    \begin{lemm}\label{lem:countAcapS}
        Suppose Assumptions \ref{ass:smoothness} to \ref{ass:CauchyDec} hold.  Let $\epsilon>0$.  If $\|\nabla f(x_k)\|\ge\epsilon$ for all $k\in\mathcal{K}$, then $$\left|\mathcal{A}\cap\mathcal{S}\right|\le\phi(\epsilon),$$ where $$\phi(\epsilon)=\frac{f(x^0)-f_{low}}{\eta_1c_1\epsilon_g(\epsilon)\min(\epsilon_g(\epsilon)\slash\kappa_H, \Delta_{\min})}$$ and $\epsilon_g(\epsilon)$ is defined in Lemma \ref{lem:bdnormghat}.
    \end{lemm}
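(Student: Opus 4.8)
The plan is to run the standard trust-region accounting argument: every iteration in $\mathcal{A}\cap\mathcal{S}$ forces a quantifiable decrease in $f$, and since $f$ is bounded below by Assumption \ref{ass:bdbelow}, only boundedly many such iterations can occur.

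First I would fix $k\in\mathcal{A}\cap\mathcal{S}$. Membership in $\mathcal{S}$ means the criticality test fails (so $k\notin\mathcal{C}$) and $\rho_k\ge\eta_1$, while membership in $\mathcal{A}$ means $D_k$ is $\alpha_D$-well aligned for $f$ at $x_k$. Thus $k\in\mathcal{A}\backslash\mathcal{C}$ and, by hypothesis, $\|\nabla f(x_k)\|\ge\epsilon$, so Lemma \ref{lem:bdnormghat} yields $\|\nabla\widehat{m}_k(\mathbf{0})\|\ge\epsilon_g(\epsilon)$.

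Next I would combine $\rho_k\ge\eta_1$ with the Cauchy decrease of Assumption \ref{ass:CauchyDec}. Since $\kappa_H\ge 1$ and $\|\nabla^2\widehat{m}_k\|\le\kappa_H$ (Assumption \ref{ass:bdmodelHess}) we have $\max\{\|\nabla^2\widehat{m}_k\|,1\}\le\kappa_H$, and since the stopping test has not triggered we have $\Delta_k\ge\Delta_{\min}$ throughout $\mathcal{K}$. Feeding the bound $\|\nabla\widehat{m}_k(\mathbf{0})\|\ge\epsilon_g(\epsilon)$ into the Cauchy decrease then gives
$$f(x_k)-f(x_k+s_k)\ge\eta_1\left(\widehat{m}_k(\mathbf{0})-\widehat{m}_k(\widehat{s}_k)\right)\ge\eta_1 c_1\epsilon_g(\epsilon)\min\left(\frac{\epsilon_g(\epsilon)}{\kappa_H},\Delta_{\min}\right).$$

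Then I would establish that the iterates form a monotone non-increasing sequence so that the other indices can be safely dropped. The next-iterate rule \eqref{eq:nextiterate} gives $f(x_{k+1})\le f(x_k+s_k)$; taking $d_i=d_j=\mathbf{0}\in[\mathbf{0}~D_k]$ in the same rule yields $f(x_{k+1})\le f(x_k)$ on every non-criticality iteration, and $x_{k+1}=x_k$ on criticality iterations. Hence $f(x_k)-f(x_{k+1})\ge 0$ for all $k\in\mathcal{K}$, and the displayed bound lower-bounds this gap whenever $k\in\mathcal{A}\cap\mathcal{S}$. Telescoping over $\mathcal{K}$ and discarding the nonnegative contributions from indices outside $\mathcal{A}\cap\mathcal{S}$ gives
$$f(x^0)-f_{low}\ge\sum_{k\in\mathcal{A}\cap\mathcal{S}}\left(f(x_k)-f(x_{k+1})\right)\ge\left|\mathcal{A}\cap\mathcal{S}\right|\eta_1 c_1\epsilon_g(\epsilon)\min\left(\frac{\epsilon_g(\epsilon)}{\kappa_H},\Delta_{\min}\right),$$
and rearranging produces $|\mathcal{A}\cap\mathcal{S}|\le\phi(\epsilon)$. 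The only delicate point is the monotonicity step: the telescoping would fail if an unsuccessful or poorly aligned iteration could increase $f$, so the crux is verifying that \eqref{eq:nextiterate} (together with $x_{k+1}=x_k$ on criticality steps) precludes any increase, allowing every term outside $\mathcal{A}\cap\mathcal{S}$ to be dropped as nonnegative.
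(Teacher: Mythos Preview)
Your proof is correct and follows exactly the standard trust-region accounting argument that the paper defers to by citing \cite[Lemma~4]{cartis2023scalable} with $\Delta=\Delta_{\min}$; you have simply written out that argument in full, including the monotonicity check via \eqref{eq:nextiterate} with $d_i=d_j=\mathbf{0}$, which is precisely what is needed here.
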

    \begin{proof}
        See \cite[Lemma 4]{cartis2023scalable} with $\Delta=\Delta_{\min}$.

        $\hfill\qed$
    \end{proof}

    The following lemma bounds the remaining part of $\mathcal{A}$.  We note that the proof is inspired by \cite[Lemma 6]{cartis2023scalable}.
    \begin{lemm}\label{lem:countAslashS}
        The numbers of iterations in $\mathcal{A}\backslash\mathcal{S}$ and $\mathcal{S}$ have the relation $$\left|\mathcal{A}\backslash\mathcal{S}\right|\le C_1\left|\mathcal{S}\right|+C_2,$$ where $C_1=\ln(1\slash\gamma_{\mathrm{dec}})^{-1}\ln(\gamma_{\mathrm{inc}})$ and $C_2=\ln(1\slash\gamma_{\mathrm{dec}})^{-1}\ln(\Delta_0\slash\Delta_{\min}).$
    \end{lemm}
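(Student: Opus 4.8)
The plan is to run the classical trust-region radius-counting argument: track the evolution of $\Delta_k$ across the iterations and exploit the fact that the trust-region radius can only grow on iterations in $\mathcal{S}$.

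First I would read off from the radius update in the main algorithm how $\Delta_k$ changes on each type of iteration. Whenever the criticality test succeeds ($k\in\mathcal{C}$) or it fails with $\rho_k<\eta_1$, the radius shrinks by exactly the factor $\gamma_{\mathrm{dec}}$; these are precisely the iterations of $\mathcal{K}\backslash\mathcal{S}$. On an iteration $k\in\mathcal{S}$ (criticality test fails and $\rho_k\ge\eta_1$) the radius either grows by a factor of at most $\gamma_{\mathrm{inc}}$ (the cap at $\Delta_{\max}$ only ever decreases the realized growth factor) or is left unchanged. Hence $\Delta_k$ increases only on a subset $\mathcal{K}^{\mathrm{inc}}\subseteq\mathcal{S}$, while $\Delta_{k+1}\slash\Delta_k=\gamma_{\mathrm{dec}}$ on every iteration of $\mathcal{K}\backslash\mathcal{S}$.

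Next I would telescope the logarithm of the radius. Writing $\ln\Delta_K-\ln\Delta_0=\sum_{k}(\ln\Delta_{k+1}-\ln\Delta_k)$, each summand equals $-\ln(1\slash\gamma_{\mathrm{dec}})$ on an iteration of $\mathcal{K}\backslash\mathcal{S}$, is at most $\ln\gamma_{\mathrm{inc}}$ on an iteration of $\mathcal{K}^{\mathrm{inc}}$, and vanishes otherwise. Because the stopping condition has not been triggered, the final radius satisfies $\Delta_K\ge\Delta_{\min}$, so $\ln\Delta_K\ge\ln\Delta_{\min}$. Combining these bounds and rearranging gives
$$
\left|\mathcal{K}\backslash\mathcal{S}\right|\ln(1\slash\gamma_{\mathrm{dec}})\le\left|\mathcal{K}^{\mathrm{inc}}\right|\ln\gamma_{\mathrm{inc}}+\ln(\Delta_0\slash\Delta_{\min}).
$$
Dividing by $\ln(1\slash\gamma_{\mathrm{dec}})>0$ and using $\left|\mathcal{K}^{\mathrm{inc}}\right|\le\left|\mathcal{S}\right|$ yields $\left|\mathcal{K}\backslash\mathcal{S}\right|\le C_1\left|\mathcal{S}\right|+C_2$ with $C_1$ and $C_2$ exactly as stated.

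Finally, since $\mathcal{A}\backslash\mathcal{S}\subseteq\mathcal{K}\backslash\mathcal{S}$, the desired inequality $\left|\mathcal{A}\backslash\mathcal{S}\right|\le C_1\left|\mathcal{S}\right|+C_2$ follows at once. I expect the only genuine point requiring care to be the first step: correctly parsing the multi-branch radius update to confirm that every iteration outside $\mathcal{S}$ decreases $\Delta_k$ by $\gamma_{\mathrm{dec}}$ and that the $\Delta_{\max}$ cap never inflates the per-iteration growth beyond $\gamma_{\mathrm{inc}}$. The telescoping estimate and the set-inclusion step are then routine, modulo the harmless bookkeeping of the endpoint index in the telescoping sum.
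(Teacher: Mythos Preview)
Your proposal is correct and follows essentially the same radius-counting argument as the paper: classify iterations by whether $\Delta_k$ shrinks (exactly by $\gamma_{\mathrm{dec}}$, on all of $\mathcal{K}\backslash\mathcal{S}$) or may grow (by at most $\gamma_{\mathrm{inc}}$, only on $\mathcal{S}$), then combine the multiplicative update with the lower bound $\Delta_{\min}$ and take logarithms. The only cosmetic difference is that you first bound the larger quantity $|\mathcal{K}\backslash\mathcal{S}|$ and then invoke $\mathcal{A}\backslash\mathcal{S}\subseteq\mathcal{K}\backslash\mathcal{S}$, whereas the paper absorbs that inclusion implicitly by writing $\gamma_{\mathrm{dec}}^{|\mathcal{A}\backslash\mathcal{S}|}$ directly in the product inequality.
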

    \begin{proof}
        For all $k\in\mathcal{A}\backslash\mathcal{S}$, we have either the criticality test condition is satisfied, or the criticality test condition is not satisfied with $\rho_k<\eta_1$. 
        In both cases, we have $\Delta_{k+1}=\gamma_{\mathrm{dec}}\Delta_k$.  For all $k\in\mathcal{S}$, we have $\Delta_{k+1}\le\gamma_{\mathrm{inc}}\Delta_k$.

        Notice that $\Delta_k$ can only increase when $k\in\mathcal{S}$.  Hence, the maximum $\Delta_k$ for all $k\in\mathcal{K}$ is bounded by $\Delta_0\gamma_{\mathrm{inc}}^{\left|\mathcal{S}\right|}$.  Since the stopping condition is not triggered for all $k\in\mathcal{K}$, we must have 
        $$\Delta_0\gamma_{\mathrm{inc}}^{\left|\mathcal{S}\right|}\gamma_{\mathrm{dec}}^{\left|\mathcal{A}\backslash\mathcal{S}\right|}\ge\Delta_{\min}.$$
        Taking the natural logarithm of both sides, we get $$\ln(\Delta_0)+\ln(\gamma_{\mathrm{inc}})\left|\mathcal{S}\right|+\ln(\gamma_{\mathrm{dec}})\left|\mathcal{A}\backslash\mathcal{S}\right|\ge\ln(\Delta_{\min}),$$ i.e., 
        \begin{align*}
            \left|\mathcal{A}\backslash\mathcal{S}\right|\le\ln(\frac{1}{\gamma_{\mathrm{dec}}})^{-1}\ln(\gamma_{\mathrm{inc}})\left|\mathcal{S}\right|+\ln(\frac{1}{\gamma_{\mathrm{dec}}})^{-1}\ln(\frac{\Delta_0}{\Delta_{\min}}).
        \end{align*}

        $\hfill\qed$
    \end{proof}

    Combining Lemma \ref{lem:countAcapS} and \ref{lem:countAslashS}, we get an upper bound for the cardinality of $\mathcal{A}$.
    \begin{lemm}\label{lem:countA}
        Suppose Assumptions \ref{ass:smoothness} to \ref{ass:CauchyDec} hold.  Let $\epsilon>0$.  If $\|\nabla f(x_k)\|\ge\epsilon$ for all $k\in\mathcal{K}$, then we have $$\left|\mathcal{A}\right|\le\psi(\epsilon)+\frac{C_1(K+1)}{C_1+1},$$ where $\psi(\epsilon)=\phi(\epsilon)+C_2\slash (C_1+1)$, $\phi(\epsilon)$ is defined in Lemma \ref{lem:countAcapS}, and $C_1$ and $C_2$ are defined in Lemma~\ref{lem:countAslashS}.
    \end{lemm}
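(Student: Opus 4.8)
The plan is to combine the two preceding lemmas through the disjoint decomposition $\mathcal{A}=(\mathcal{A}\cap\mathcal{S})\cup(\mathcal{A}\backslash\mathcal{S})$, which gives
\begin{equation*}
    \left|\mathcal{A}\right| = \left|\mathcal{A}\cap\mathcal{S}\right| + \left|\mathcal{A}\backslash\mathcal{S}\right| \le \phi(\epsilon) + C_1\left|\mathcal{S}\right| + C_2,
\end{equation*}
where the bound on $\left|\mathcal{A}\cap\mathcal{S}\right|$ comes from Lemma \ref{lem:countAcapS} and the bound on $\left|\mathcal{A}\backslash\mathcal{S}\right|$ from Lemma \ref{lem:countAslashS}. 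The difficulty is that this introduces $\left|\mathcal{S}\right|$ on the right-hand side, and the crude estimate $\left|\mathcal{S}\right|\le K+1$ would only yield the weaker bound $\phi(\epsilon)+C_1(K+1)+C_2$. The improvement to the factor $C_1(K+1)\slash(C_1+1)$ requires a sharper control of $\left|\mathcal{S}\right|$, and this is the key step.

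To obtain that control I would bound $\left|\mathcal{S}\right|$ by splitting it across $\mathcal{A}$ and its complement. Writing $\mathcal{S}=(\mathcal{A}\cap\mathcal{S})\cup(\mathcal{A}^{\complement}\cap\mathcal{S})$ and using $\mathcal{A}^{\complement}\cap\mathcal{S}\subseteq\mathcal{A}^{\complement}$ together with $\left|\mathcal{A}^{\complement}\right|=(K+1)-\left|\mathcal{A}\right|$ (since $\mathcal{A}\subseteq\mathcal{K}$ and $\left|\mathcal{K}\right|=K+1$), and invoking Lemma \ref{lem:countAcapS} once more for $\left|\mathcal{A}\cap\mathcal{S}\right|\le\phi(\epsilon)$, I get
\begin{equation*}
    \left|\mathcal{S}\right| = \left|\mathcal{A}\cap\mathcal{S}\right| + \left|\mathcal{A}^{\complement}\cap\mathcal{S}\right| \le \phi(\epsilon) + (K+1) - \left|\mathcal{A}\right|.
\end{equation*}
The point of reintroducing $\left|\mathcal{A}\right|$ through the complement is that it converts the chain of inequalities into a single constraint on the quantity we are trying to bound.

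Substituting this estimate into the first display produces a self-referential inequality,
\begin{equation*}
    \left|\mathcal{A}\right| \le \phi(\epsilon) + C_1\left(\phi(\epsilon) + (K+1) - \left|\mathcal{A}\right|\right) + C_2,
\end{equation*}
and collecting the $\left|\mathcal{A}\right|$ terms gives $(1+C_1)\left|\mathcal{A}\right| \le (1+C_1)\phi(\epsilon) + C_1(K+1) + C_2$. Dividing by $1+C_1>0$ then yields exactly
\begin{equation*}
    \left|\mathcal{A}\right| \le \phi(\epsilon) + \frac{C_2}{C_1+1} + \frac{C_1(K+1)}{C_1+1} = \psi(\epsilon) + \frac{C_1(K+1)}{C_1+1}.
\end{equation*}
The main obstacle is the decomposition of $\mathcal{S}$ and the reintroduction of $\left|\mathcal{A}\right|$ via $\mathcal{A}^{\complement}$; once that is in place the remainder is routine arithmetic. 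I would also confirm that $C_1>0$, which holds because $\gamma_{\mathrm{dec}}\in(0,1)$ forces $\ln(1\slash\gamma_{\mathrm{dec}})>0$ and $\gamma_{\mathrm{inc}}>1$ forces $\ln(\gamma_{\mathrm{inc}})>0$, so that dividing by $1+C_1$ preserves the direction of the inequality.
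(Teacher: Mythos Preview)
Your proof is correct and follows essentially the same approach as the paper: decompose $\mathcal{A}$ into $(\mathcal{A}\cap\mathcal{S})\cup(\mathcal{A}\backslash\mathcal{S})$, apply the two preceding lemmas, then split $|\mathcal{S}|$ as $|\mathcal{A}\cap\mathcal{S}|+|\mathcal{A}^{\complement}\cap\mathcal{S}|$ and bound the latter via $|\mathcal{A}^{\complement}|=(K+1)-|\mathcal{A}|$ to obtain a self-referential inequality that is solved for $|\mathcal{A}|$. The only cosmetic difference is that the paper carries out the substitution in a single chain of inequalities rather than first isolating the bound on $|\mathcal{S}|$.
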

    \begin{proof}
        Using the Lemma \ref{lem:countAcapS} and \ref{lem:countAslashS}, we obtain
        \begin{align*}
            \left|\mathcal{A}\right|&=\left|\mathcal{A}\cap\mathcal{S}\right|+\left|\mathcal{A}\backslash\mathcal{S}\right|\\
            &\le \phi(\epsilon)+C_1\left|\mathcal{S}\right|+C_2\\
            &= \phi(\epsilon)+C_1\left(\left|\mathcal{A}\cap\mathcal{S}\right|+\left|\mathcal{A}^{\complement}\cap\mathcal{S}\right|\right)+C_2\\
            &= \phi(\epsilon)+C_1\left|\mathcal{A}\cap\mathcal{S}\right|+C_1\left|\mathcal{A}^{\complement}\cap\mathcal{S}\right|+C_2\\
            &\le (C_1+1)\phi(\epsilon)+C_1\left|\mathcal{A}^{\complement}\right|+C_2\\
            &= (C_1+1)\phi(\epsilon)+C_1\left((K+1)-\left|\mathcal{A}\right|\right)+C_2\\
            &= (C_1+1)\phi(\epsilon)+C_1(K+1)-C_1\left|\mathcal{A}\right|+C_2,
        \end{align*}
        which gives
        \begin{align*}
            \left|\mathcal{A}\right|\le\phi(\epsilon)+\frac{C_1(K+1)+C_2}{C_1+1}=\psi(\epsilon)+\frac{C_1(K+1)}{C_1+1}.
        \end{align*}

        $\hfill\qed$
    \end{proof}

    The next Lemma is based on a general framework introduced in \cite{gratton2015direct}. 
    \begin{lemm}\label{lem:probcountA}
        Suppose Assumption \ref{ass:subspaceQual} holds.  Then for all $\delta\in (0,1)$, we have $$\mathbb{P}\left[\left|\mathcal{A}\right|\le(1-\delta)(1-\delta_S)(K+1)\right]\le e^{-\delta^2(1-\delta_S)(K+1)\slash 2}.$$
    \end{lemm}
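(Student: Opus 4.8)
The plan is to recognize this as a multiplicative Chernoff bound for a sum of $\{0,1\}$-valued random variables whose conditional success probabilities are uniformly bounded below, and to prove it via the standard exponential-supermartingale (Markov-on-the-exponential) method that underlies the framework of \cite{gratton2015direct}.

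First I would fix the probabilistic structure. For each $k\in\mathcal{K}$ let $\mathcal{F}_{k-1}$ denote the $\sigma$-algebra generated by all randomness used before iteration $k$ (the history $x_0,\Delta_0,D_0,\ldots,x_k,\Delta_k$), and define the indicator $T_k=\mathbf{1}[D_k\text{ is }\alpha_D\text{-well aligned for }f\text{ at }x_k]$, so that $|\mathcal{A}|=\sum_{k=0}^K T_k$. The crucial point is that the randomness making $D_k$ well aligned is the fresh Gaussian draw of $A$ in Algorithm \ref{alg:dirngen} at iteration $k$, which is independent of $\mathcal{F}_{k-1}$; hence Assumption \ref{ass:subspaceQual} (through Theorem \ref{thm:alphaDwellalignedD}) furnishes the \emph{conditional} bound $\mathbb{P}[T_k=1\mid\mathcal{F}_{k-1}]\ge 1-\delta_S$. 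Writing $p=1-\delta_S$ and $N=K+1$, the target becomes $\mathbb{P}[\sum_k T_k\le(1-\delta)pN]\le e^{-\delta^2 pN/2}$.

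Next, for any $s>0$, Markov's inequality applied to $e^{-s\sum_k T_k}$ gives $\mathbb{P}[\sum_k T_k\le(1-\delta)pN]\le e^{s(1-\delta)pN}\,\mathbb{E}[e^{-s\sum_k T_k}]$. I would then bound the moment factor by peeling off terms with the tower property, working from $k=K$ down to $0$: conditioning on $\mathcal{F}_{K-1}$ yields $\mathbb{E}[e^{-sT_K}\mid\mathcal{F}_{K-1}]=1-\mathbb{P}[T_K=1\mid\mathcal{F}_{K-1}](1-e^{-s})\le 1-p(1-e^{-s})\le e^{-p(1-e^{-s})}$, where the first inequality uses $1-e^{-s}>0$ together with the conditional lower bound and the second uses $1+x\le e^{x}$. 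Iterating gives $\mathbb{E}[e^{-s\sum_k T_k}]\le e^{-pN(1-e^{-s})}$, hence $\mathbb{P}[\sum_k T_k\le(1-\delta)pN]\le\exp\big(pN[s(1-\delta)-(1-e^{-s})]\big)$.

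Finally I would optimize the exponent over $s$. Minimizing $g(s)=s(1-\delta)-(1-e^{-s})$ gives $e^{-s}=1-\delta$, i.e. $s=\ln\frac{1}{1-\delta}$, at which $g(s)=-(1-\delta)\ln(1-\delta)-\delta$. It then remains to check the scalar inequality $-(1-\delta)\ln(1-\delta)-\delta\le-\delta^2/2$ on $(0,1)$, which follows because $h(\delta)=(1-\delta)\ln(1-\delta)+\delta-\delta^2/2$ satisfies $h(0)=0$, $h'(0)=0$, and $h''(\delta)=\delta/(1-\delta)\ge 0$, so $h\ge 0$. Substituting yields the claimed bound $e^{-\delta^2 pN/2}=e^{-\delta^2(1-\delta_S)(K+1)/2}$. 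The only genuine subtlety — the place needing care rather than routine calculation — is justifying the conditional probability bound $\mathbb{P}[T_k=1\mid\mathcal{F}_{k-1}]\ge 1-\delta_S$ in this dependent setting; once that is established from the fresh-draw property of Algorithm \ref{alg:dirngen}, the supermartingale peeling and the elementary inequality are entirely standard.
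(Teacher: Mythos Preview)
Your proof is correct and is precisely the standard exponential-supermartingale argument that the paper defers to by citation: the paper does not prove this lemma itself but simply invokes \cite[Lemma 10 until inequality (67)]{cartis2023scalable} or equivalently \cite[Lemma 4.5]{gratton2015direct} with $p=1-\delta_S$, $\lambda=(1-\delta)(1-\delta_S)$, $k=K+1$. Your write-up supplies exactly the details behind those references, including the one substantive point the paper leaves implicit, namely that Assumption \ref{ass:subspaceQual} must be read as a \emph{conditional} bound $\mathbb{P}[T_k=1\mid\mathcal{F}_{k-1}]\ge 1-\delta_S$ (justified by the fresh Gaussian draw in Algorithm \ref{alg:dirngen}) so that the tower-property peeling goes through.
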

    \begin{proof}
        See the proof of \cite[Lemma 10 until inequality (67)]{cartis2023scalable}.  Alternatively, \cite[Lemma 4.5]{gratton2015direct} with $p=1-\delta_S,\lambda=(1-\delta)(1-\delta_S)$, and $k=K+1$.

        $\hfill\qed$
    \end{proof}

    Now we present our first convergence result, which gives a lower bound on the probability of finding a sufficiently small gradient in the first $(K+1)$ iterations.
    \begin{theo}\label{thm:converg_finite}
        Suppose Assumptions \ref{ass:smoothness} to \ref{ass:subspaceQual} hold.  Suppose $0<\delta_S<1\slash(C_1+1)$ with $C_1$ defined in Lemma \ref{lem:countAslashS}.  Let $\epsilon>0$ and $$K\ge\frac{2\psi(\epsilon)}{1-\delta_S-C_1\slash\left(C_1+1\right)}-1,$$ where $\psi(\epsilon)$ is defined in Lemma \ref{lem:countA}.  If the stopping condition is not triggered for all $k\in\{0,...,K\}$, then we have $$\mathbb{P}\left[\min\limits_{k\le K}\left\|\nabla f(x_k)\right\|\le\epsilon\right]\ge 1-e^{-C_3(K+1)},$$
        where $$C_3=\frac{\left(1-\delta_S-C_1\slash\left(C_1+1\right)\right)^2}{8(1-\delta_S)}.$$
    \end{theo}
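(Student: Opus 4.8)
The plan is to bound the probability of the complementary event and then invoke the two counting lemmas already established. Define the ``bad'' event $E = \{\|\nabla f(x_k)\| > \epsilon \text{ for all } k \in \mathcal{K}\}$, so that the quantity we wish to lower-bound is exactly the complement, $\mathbb{P}[E^{\complement}] = \mathbb{P}[\min_{k\le K}\|\nabla f(x_k)\| \le \epsilon]$. First I would observe that on $E$ the hypothesis of Lemma \ref{lem:countA} is satisfied (since $\|\nabla f(x_k)\| > \epsilon$ implies $\|\nabla f(x_k)\| \ge \epsilon$), giving the deterministic bound $|\mathcal{A}| \le \psi(\epsilon) + C_1(K+1)/(C_1+1)$. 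The key idea is that Lemma \ref{lem:probcountA} guarantees $|\mathcal{A}|$ is \emph{large} with high probability, so a small $|\mathcal{A}|$ is improbable; choosing the free parameter $\delta$ correctly will make the deterministic upper bound contradict the probabilistic lower bound.

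Next I would choose $\delta = \frac{1-\delta_S - C_1/(C_1+1)}{2(1-\delta_S)}$. Writing $\beta = 1-\delta_S - C_1/(C_1+1)$, the assumption $\delta_S < 1/(C_1+1)$ gives $\beta = 1/(C_1+1) - \delta_S > 0$, and since $\beta < 2(1-\delta_S)$ trivially, we have $\delta \in (0,1)$ as required by Lemma \ref{lem:probcountA}. The heart of the argument is the algebraic verification that the lower bound on $K$ forces
\begin{equation*}
    \psi(\epsilon) + \frac{C_1(K+1)}{C_1+1} \le (1-\delta)(1-\delta_S)(K+1).
\end{equation*}
Indeed, using $\delta(1-\delta_S) = \beta/2$ and $1-\delta_S = \beta + C_1/(C_1+1)$, the right-hand side simplifies to $\frac{\beta}{2}(K+1) + \frac{C_1}{C_1+1}(K+1)$, so after cancelling the common term the inequality reduces to $\psi(\epsilon) \le \frac{\beta}{2}(K+1)$, which is precisely the stated hypothesis $K \ge 2\psi(\epsilon)/\beta - 1$.

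Combining these observations, on the event $E$ we have $|\mathcal{A}| \le \psi(\epsilon) + C_1(K+1)/(C_1+1) \le (1-\delta)(1-\delta_S)(K+1)$, so $E$ is contained in the event $\{|\mathcal{A}| \le (1-\delta)(1-\delta_S)(K+1)\}$. Applying Lemma \ref{lem:probcountA} then gives
\begin{equation*}
    \mathbb{P}[E] \le \mathbb{P}\left[|\mathcal{A}| \le (1-\delta)(1-\delta_S)(K+1)\right] \le e^{-\delta^2(1-\delta_S)(K+1)/2}.
\end{equation*}
Finally I would check that the exponent collapses to $C_3(K+1)$: substituting $\delta = \beta/(2(1-\delta_S))$ gives $\delta^2(1-\delta_S)/2 = \beta^2/(8(1-\delta_S)) = C_3$. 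Taking complements yields $\mathbb{P}[E^{\complement}] \ge 1 - e^{-C_3(K+1)}$, as claimed. I expect the main obstacle to be purely bookkeeping: keeping the direction of the inequalities consistent so that $E$ lands \emph{inside} the low-$|\mathcal{A}|$ event (so that the one-sided tail bound of Lemma \ref{lem:probcountA} can legitimately be applied), and tuning $\delta$ so that simultaneously the deterministic threshold is matched and the exponent simplifies exactly to the advertised constant $C_3$.
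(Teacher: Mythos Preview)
Your proposal is correct and follows essentially the same approach as the paper: the same choice of $\delta$ (your $\beta/(2(1-\delta_S))$ is algebraically identical to the paper's $\tfrac{1}{2}[1 - C_1/((C_1+1)(1-\delta_S))]$), the same reduction of the $K$-hypothesis to $\psi(\epsilon)\le \tfrac{\beta}{2}(K+1)$, and the same appeal to Lemmas~\ref{lem:countA} and~\ref{lem:probcountA}. The only cosmetic difference is that the paper routes through the random variable $\epsilon_K=\min_{k\le K}\|\nabla f(x_k)\|$ and the monotonicity of $\psi$, whereas you work directly with the bad event $E$ and event containment; both arrive at the same tail bound.
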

    \begin{proof}
        Let $\epsilon_K=\min_{k\le K}\|\nabla f(x_k)\|$ and $A_K=\left|\mathcal{A}\right|$.  If $\epsilon_K=0$, then $\min_{k\le K}\|\nabla f(x_k)\|\le\epsilon$ and we are done.  If $\epsilon_K>0$, then we have $A_K\le\psi(\epsilon_K)+C_1(K+1)\slash(C_1+1)$ by Lemma \ref{lem:countA}.  Notice that $\psi(\cdot)$ is a non-increasing function and $$\psi(\epsilon)\le\frac{1}{2}\left(1-\delta_S-\frac{C_1}{C_1+1}\right)\left(K+1\right).$$ Therefore, we have
        \begin{align*}
            \mathbb{P}\left[\epsilon_K\ge\epsilon\right]&\le \mathbb{P}\left[\psi(\epsilon_K)\le\psi(\epsilon)\right]\\
            &\le \mathbb{P}\left[\psi(\epsilon_K)\le\frac{1}{2}\left(1-\delta_S-\frac{C_1}{C_1+1}\right)\left(K+1\right)\right]\\
            &\le \mathbb{P}\left[A_K-\frac{C_1(K+1)}{C_1+1}\le\frac{1}{2}\left(1-\delta_S-\frac{C_1}{C_1+1}\right)\left(K+1\right)\right]\\
            &= \mathbb{P}\left[A_K\le\frac{1}{2}\left(1-\delta_S+\frac{C_1}{C_1+1}\right)\left(K+1\right)\right].
        \end{align*}

        Since $\delta_S<1\slash(C_1+1)$ and $C_1>0$, we have $1-\delta_S>C_1\slash(C_1+1)>0$, which gives $$\frac{C_1}{(C_1+1)(1-\delta_S)}\in (0,1).$$ Hence, we can take $$\delta=\frac{1}{2}\left[1-\frac{C_1}{(C_1+1)(1-\delta_S)}\right]\in (0,1)$$ in Lemma \ref{lem:probcountA} and obtain
        \begin{align*}
            \mathbb{P}\left[A_K\le\frac{1}{2}\left(1-\delta_S+\frac{C_1}{C_1+1}\right)\left(K+1\right)\right]\le e^{-C_3(K+1)}.
        \end{align*}
        Therefore, we have 
        \begin{align*}
            \mathbb{P}\left[\min\limits_{k\le K}\left\|\nabla f(x_k)\right\|\le\epsilon\right]=1-\mathbb{P}\left[\epsilon_K\ge\epsilon\right]\ge 1-e^{-C_3(K+1)}.
        \end{align*}

        $\hfill\qed$
    \end{proof}

    Based on Theorem \ref{thm:converg_finite}, we get almost-sure liminf-type of convergence.
    \begin{theo}\label{thm:converg_liminf}
        Suppose Assumptions \ref{ass:smoothness} to \ref{ass:subspaceQual} hold.  Suppose $0<\delta_S<1\slash(C_1+1)$ with $C_1$ defined in Lemma \ref{lem:countAslashS}.  If $\myalg$ is run with $\Delta_{\min}=0$, then $$\mathbb{P}\left[\inf\limits_{k\ge 0}\left\|\nabla f(x_k)\right\|=0\right]=1.$$ 
    \end{theo}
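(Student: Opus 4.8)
The plan is to deduce this asymptotic statement from the finite-iteration high-probability bound of Theorem \ref{thm:converg_finite} by a limiting argument, after reducing to a single scale $\epsilon$. First I would reduce to showing $\mathbb{P}[B_\epsilon]=0$ for every fixed $\epsilon>0$, where $B_\epsilon=\{\|\nabla f(x_k)\|\ge\epsilon \text{ for all } k\ge 0\}$: since $\{\inf_k\|\nabla f(x_k)\|>0\}=\bigcup_{m\ge 1}B_{1/m}$, countable subadditivity then gives $\mathbb{P}[\inf_k\|\nabla f(x_k)\|=0]=1$. I would also record two facts. Because $\Delta_{\min}=0$, the stopping test is never triggered, so the index runs over all $k\ge 0$ and every constant of Section \ref{sec:converg} is available with an arbitrary auxiliary threshold in place of $\Delta_{\min}$; crucially $C_1$ and $C_3$ do not depend on that threshold. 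Moreover, taking $d_i=d_j=\mathbf{0}$ in \eqref{eq:nextiterate} gives $f(x_{k+1})\le f(x_k)$, so by Assumption \ref{ass:bdbelow} the sequence $\{f(x_k)\}$ is nonincreasing and convergent.

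The backbone is the limiting argument. Fix $\bar\Delta>0$ and let $E_{\bar\Delta,K}=\{\Delta_k\ge\bar\Delta \text{ for all } k\le K\}$ be the event that a run halted at threshold $\bar\Delta$ survives to iteration $K$. On $B_\epsilon\cap E_{\bar\Delta,K}$ the trajectory coincides with one that never stops at threshold $\bar\Delta$ and has $\min_{k\le K}\|\nabla f(x_k)\|\ge\epsilon$, so the counting of Lemmas \ref{lem:countAcapS}--\ref{lem:countA} applies verbatim with $\Delta_{\min}=\bar\Delta$, and Lemma \ref{lem:probcountA} then yields $\mathbb{P}[B_\epsilon\cap E_{\bar\Delta,K}]\le e^{-C_3(K+1)}$ once $K$ exceeds the threshold $K_0(\bar\Delta,\epsilon)$ from Theorem \ref{thm:converg_finite}. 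Letting $K\to\infty$ gives $\mathbb{P}[B_\epsilon\cap\{\inf_k\Delta_k\ge\bar\Delta\}]=0$, and letting $\bar\Delta\downarrow 0$ along $\bar\Delta=1/m$ gives $\mathbb{P}[B_\epsilon\cap\{\inf_k\Delta_k>0\}]=0$.

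It remains to control $B_\epsilon\cap\{\inf_k\Delta_k=0\}$, and I expect this to be the \emph{main obstacle}, since the finite-iteration machinery (through $\phi(\epsilon)$ and $C_2$) degenerates precisely when the radius is allowed to collapse. Here I would exploit the criticality mechanism. On $B_\epsilon$, combining $\alpha_D$-well alignedness (Lemma \ref{lem:wellalignedQR}) with the $Q_k$-fully linear error bound shows that a well-aligned criticality step must have $\Delta_k\ge\Delta_\epsilon^{\mathrm{crit}}:=\alpha_D\epsilon/[M_D(\kappa_{eg}+1/\mu)]>0$, while a well-aligned noncritical step with $\Delta_k$ below an $\epsilon$-dependent threshold $\Delta^*$ has, using $\widehat m_k(\mathbf{0})=f(x_k)$ and Assumption \ref{ass:CauchyDec}, ratio error $|\rho_k-1|\le\kappa_{ef}\Delta_k/(c_1\epsilon_g(\epsilon))\le 1-\eta_2$, hence $\rho_k\ge\eta_2\ge\eta_1$ and cannot be radius-decreasing. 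Consequently, once $\Delta_k<\min\{\Delta_\epsilon^{\mathrm{crit}},\Delta^*\}$ the radius can only decrease on misaligned iterations ($k\in\mathcal{A}^{\complement}$); equivalently, any record low of the radius produced by a well-aligned step is a criticality step at which the same bound forces $\|\nabla f(x_k)\|\le (M_D/\alpha_D)(\kappa_{eg}+1/\mu)\Delta_k\to 0$, contradicting $B_\epsilon$.

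Thus on $B_\epsilon$ a radius collapse must be driven by infinitely many misaligned iterations with no well-aligned recovery. By the independence of the subspace draws and Assumption \ref{ass:subspaceQual}, together with the drift hypothesis $\delta_S<1/(C_1+1)$ — which is exactly what renders $\{\ln\Delta_k\}$ a positive-drift submartingale off the critical region — this event has probability zero, via the concentration bound of Lemma \ref{lem:probcountA} in the spirit of \cite{gratton2015direct}. Combining this with the previous paragraph gives $\mathbb{P}[B_\epsilon]=0$, and hence the claim. The delicate points I would spend most care on are verifying that the per-iteration radius recursion dominates a genuine positive-drift random walk off the critical region, and that the "keep the radius unchanged" updates do not accumulate so as to spoil the drift.
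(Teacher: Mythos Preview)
The paper's proof is far shorter than your proposal: it simply invokes Theorem~\ref{thm:converg_finite} directly, notes that for any fixed $\epsilon>0$ and any $K$ above the threshold there one has
\[
\mathbb{P}\Bigl[\inf_{k\ge 0}\|\nabla f(x_k)\|\le\epsilon\Bigr]\ \ge\ \mathbb{P}\Bigl[\min_{k\le K}\|\nabla f(x_k)\|\le\epsilon\Bigr]\ \ge\ 1-e^{-C_3(K+1)},
\]
sends $K\to\infty$, and then $\epsilon\downarrow 0$ by continuity of probability. There is no auxiliary threshold $\bar\Delta$, no case split on $\inf_k\Delta_k$, and no martingale argument.

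You are right that this glosses over something: with $\Delta_{\min}=0$ the quantities $\phi(\epsilon)$ and $C_2$ in Lemmas~\ref{lem:countAcapS}--\ref{lem:countAslashS} (hence $\psi(\epsilon)$ and the iteration threshold in Theorem~\ref{thm:converg_finite}) are formally infinite, so Theorem~\ref{thm:converg_finite} cannot literally be applied at any finite $K$ as the paper does. Your auxiliary-threshold treatment of the event $\{\inf_k\Delta_k>0\}$ is one clean way to repair that half. But the genuine obstacle---which you correctly locate in the event $B_\epsilon\cap\{\inf_k\Delta_k=0\}$---is not closed by your outline. Your positive-drift submartingale claim requires that well-aligned, noncritical, successful iterations at small $\Delta_k$ actually \emph{increase} the radius; yet the update rule increases $\Delta_k$ only when additionally $\|\widehat s_k\|\ge 0.95\Delta_k$, and Assumption~\ref{ass:CauchyDec} is a pure Cauchy-decrease condition that says nothing about $\|\widehat s_k\|$. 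If such iterations merely keep $\Delta_k$ unchanged while misaligned iterations may shrink it, the conditional increment of $\ln\Delta_k$ is at best $\delta_S\ln\gamma_{\mathrm{dec}}<0$, not positive, and the submartingale argument collapses. (Nor does the hypothesis $\delta_S<1/(C_1+1)$ encode the drift condition you want: balancing an increase of $\ln\gamma_{\mathrm{inc}}$ against a decrease of $\ln\gamma_{\mathrm{dec}}$ would give $\delta_S<C_1/(C_1+1)$, which is a different inequality.) You flag exactly this as a ``delicate point'' but offer no mechanism to exclude it; under the paper's stated assumptions this is a real gap in your plan, not a routine verification.
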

    \begin{proof}
        Let $\epsilon>0$ and
        \begin{equation}\label{ineq:lowerbdK}
            K\ge\frac{2\psi(\epsilon)}{1-\delta_S-C_1\slash\left(C_1+1\right)}-1,
        \end{equation}
        where $\psi(\epsilon)$ is defined in Lemma \ref{lem:countA}.  Notice that for all $\epsilon>0$ and $K$ satisfying \eqref{ineq:lowerbdK} we have
        \begin{equation*}
            \mathbb{P}\left[\inf\limits_{k\ge 0}\left\|\nabla f(x_k)\right\|\le\epsilon\right]\ge\mathbb{P}\left[\min\limits_{k\le K}\left\|\nabla f(x_k)\right\|\le\epsilon\right]\ge 1-e^{-C_3(K+1)},
        \end{equation*}
        where the second inequality comes from Theorem \ref{thm:converg_finite}.  Taking $K\to\infty$, we have for all $\epsilon>0$, 
        \begin{equation*}
            \mathbb{P}\left[\inf\limits_{k\ge 0}\left\|\nabla f(x_k)\right\|\le\epsilon\right] \ge 1-\lim\limits_{K\to\infty}e^{-C_3(K+1)} = 1.
        \end{equation*}
        Finally, we take $\epsilon\to 0$ and use the continuity of probability to get 
        \begin{align*}
            \mathbb{P}\left[\inf\limits_{k\ge 0}\left\|\nabla f(x_k)\right\|=0\right] = 1.
        \end{align*}

        $\hfill\qed$
    \end{proof}
    
    We also have the expectation on the number of iterations used to find a sufficiently small gradient.
    \begin{theo}\label{thm:converg_expectation}
        Suppose Assumptions \ref{ass:smoothness} to \ref{ass:subspaceQual} hold.  Suppose $0<\delta_S<1\slash(C_1+1)$ with $C_1$ defined in Lemma \ref{lem:countAslashS}.  Let $\epsilon>0$ and $K_\epsilon=\min\left\{k:\|\nabla f(x_k)\|\le\epsilon\right\}$.  Denote $$K_{\min}(\epsilon)=\frac{2\psi(\epsilon)}{1-\delta_S-C_1\slash\left(C_1+1\right)}-1,$$ where $\psi(\epsilon)$ is defined in Lemma \ref{lem:countA}.  Then we have $$\mathbb{E}\left[K_\epsilon\right]\le K_{\min}(\epsilon) + \frac{e^{-C_3(K_{\min}(\epsilon)+1)}}{1-e^{-C_3}} = \mathcal{O}(\epsilon^{-2}),$$ where $C_3$ is defined in Theorem \ref{thm:converg_finite}.
    \end{theo}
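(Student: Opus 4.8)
The plan is to turn the high-probability guarantee of Theorem \ref{thm:converg_finite} into a tail bound on the hitting time $K_\epsilon$ and then sum that tail. First I would note that $\{K_\epsilon > K\} = \{\min_{k\le K}\|\nabla f(x_k)\| > \epsilon\}$, so that Theorem \ref{thm:converg_finite} reads
\begin{equation*}
\mathbb{P}\left[K_\epsilon > K\right] \le e^{-C_3(K+1)} \qquad \text{for every integer } K \ge K_{\min}(\epsilon),
\end{equation*}
because the required lower bound on $K$ in that theorem is exactly $K_{\min}(\epsilon)$. For the finitely many indices with $K < K_{\min}(\epsilon)$ I would use the trivial estimate $\mathbb{P}[K_\epsilon > K] \le 1$.

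Next I would invoke the tail-sum formula for an extended nonnegative integer-valued random variable, $\mathbb{E}[K_\epsilon] = \sum_{K=0}^{\infty}\mathbb{P}[K_\epsilon > K]$, and split the series at $K_{\min}(\epsilon)$. The low block contributes at most the number of integers below the threshold, i.e.\ at most $K_{\min}(\epsilon)$, while the high block is a geometric series
\begin{equation*}
\sum_{K \ge K_{\min}(\epsilon)} e^{-C_3(K+1)} \le \frac{e^{-C_3(K_{\min}(\epsilon)+1)}}{1-e^{-C_3}}.
\end{equation*}
Adding the two blocks reproduces the explicit bound $\mathbb{E}[K_\epsilon] \le K_{\min}(\epsilon) + e^{-C_3(K_{\min}(\epsilon)+1)}/(1-e^{-C_3})$; the convergence of this tail also shows $\mathbb{P}[K_\epsilon = \infty] = 0$, so the expectation is finite.

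It remains to read off the $\mathcal{O}(\epsilon^{-2})$ rate by tracking the $\epsilon$-dependence of the constants. From Lemma \ref{lem:bdnormghat}, $\epsilon_g(\epsilon)$ is linear in $\epsilon$, hence for all small enough $\epsilon$ we have $\epsilon_g(\epsilon)/\kappa_H \le \Delta_{\min}$ and the minimum in $\phi(\epsilon)$ (Lemma \ref{lem:countAcapS}) equals $\epsilon_g(\epsilon)/\kappa_H$; this makes $\phi(\epsilon) = \Theta(\epsilon^{-2})$. Since $\psi(\epsilon) = \phi(\epsilon) + C_2/(C_1+1)$ and $K_{\min}(\epsilon)$ is affine in $\psi(\epsilon)$ with $\epsilon$-independent coefficients, $K_{\min}(\epsilon) = \mathcal{O}(\epsilon^{-2})$, while the geometric term vanishes as $\epsilon \to 0$; together these give $\mathbb{E}[K_\epsilon] = \mathcal{O}(\epsilon^{-2})$.

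The hard part will be not the summation but ensuring the tail estimate is legitimately available for \emph{every} $K \ge K_{\min}(\epsilon)$: Theorem \ref{thm:converg_finite} is conditioned on the stopping test not being triggered up to iteration $K$, so I would need to run $\myalg$ as a non-terminating process (or argue that termination cannot precede $K_\epsilon$) for $K_\epsilon$ to be a genuine hitting time whose tail obeys the bound uniformly. A secondary, purely bookkeeping subtlety is the rounding in the split, where the count of integers below $K_{\min}(\epsilon)$ must be controlled by $K_{\min}(\epsilon)$; this is harmless and can be absorbed, but it is where a careless argument would pick up a spurious additive constant.
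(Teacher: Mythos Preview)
Your proposal is correct and follows essentially the same route as the paper: translate Theorem~\ref{thm:converg_finite} into the tail bound $\mathbb{P}[K_\epsilon>K]\le e^{-C_3(K+1)}$ for $K\ge K_{\min}(\epsilon)$, apply the tail-sum formula $\mathbb{E}[K_\epsilon]=\sum_{K\ge0}\mathbb{P}[K_\epsilon>K]$, split at $K_{\min}(\epsilon)$, and sum the geometric tail. Your tracing of the $\epsilon$-dependence through $\epsilon_g(\epsilon)$ and $\phi(\epsilon)$ is more explicit than the paper's one-line ``$K_{\min}(\epsilon)=\mathcal{O}(\epsilon^{-2})$'', and the stopping-condition subtlety you flag is real but is simply glossed over in the paper's own proof as well.
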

    \begin{proof}
        From Theorem \ref{thm:converg_finite}, we have that for all $K\ge K_{\min}(\epsilon)$,
        \begin{equation*}
            \mathbb{P}\left[K_\epsilon\le K\right]=\mathbb{P}\left[\min_{k\le K}\|f(x_k)\|\le\epsilon\right]\ge 1-e^{-C_3(K+1)}.
        \end{equation*}
        Notice that $\mathbb{E}[X]=\sum_{k=0}^\infty\mathbb{P}\left[X > k\right]$ for all non-negative integer-valued random variables $X$.  Thus we obtain
        \begin{align*}
            \mathbb{E}\left[K_\epsilon\right]&=\sum\limits_{K=0}^{K_{\min}(\epsilon)-1}\mathbb{P}\left[K_\epsilon > K\right] + \sum\limits_{K=K_{\min}(\epsilon)}^\infty\mathbb{P}\left[K_\epsilon > K\right]\\
            &\le K_{\min}(\epsilon) + \sum\limits_{K=K_{\min}(\epsilon)}^\infty e^{-C_3(K+1)}\\
            &= K_{\min}(\epsilon) + \frac{e^{-C_3(K_{\min}(\epsilon)+1)}}{1-e^{-C_3}}.
        \end{align*}
        The proof is complete by noticing that $K_{\min}(\epsilon)=\mathcal{O}(\epsilon^{-2})$.

        $\hfill\qed$
    \end{proof}

\section{Numerical experiments}\label{sec:numexp}
    In this section, we develop numerical experiments to explore the performance of $\myalg$.  We select two sets of test problems from the CUTEst collection~\cite{gould2015cutest}.  The dimension of each problem is between $n=1000$ and $n=1082$.  In the first set, we select 73 unconstrained optimization problems with various forms of objective functions.  This test set is chosen in order to compare the performance of $\myalg$ based on linear and quadratic approximation models.  In the second set, we pick 32 unconstrained nonlinear least-squares problems, i.e., 
    \begin{equation*}
        \min\limits_{x\in\mathbb{R}^n}f(x)=\frac{1}{2}\left\|\bm{g}(x)\right\|^2=\frac{1}{2}\sum\limits_{i=1}^m g_i(x)^2,
    \end{equation*}
    where $\bm{g}(x)=[g_1(x),...,g_m(x)]^\top$ and $g_i(x):\mathbb{R}^n\to\mathbb{R}$ for all $i=1,...,m$.  This set is chosen in order to demonstrate the advantages of exploring the structure of objective functions, which is a technique used in \cite{cartis2023scalable}.

    Our implementation of $\myalg$ is in Python 3, with random library {\tt numpy.random} and trust-region subproblem solver {\tt trsbox} from the DFBGN algorithm implemented in \cite{cartis2023scalable}.  The code is available upon request.

    \subsection{Results on general unconstrained problems}\label{subsec:numexpgeneral}
    In order to demonstrate the advantages and disadvantages of using quadratic models, we compare the performance of $\myalg$ based on three sorts of models:
    \begin{enumerate}
        \item determined quadratic models $\widehat{m}(\widehat{s})$;
        \item underdetermined quadratic models, constructed by a formula similar to $\widehat{m}(\widehat{s})$, with the model Hessian $\nabla_S^2\widehat{f}(\mathbf{0};R)$ replaced by
            \begin{equation*}
                R^{-\top}\widetilde{\delta}_{\delta_f}(\mathbf{0};R)R^{-1},
            \end{equation*}
            where the matrix $\widetilde{\delta}_{\delta_f}(\mathbf{0};R)$ is given by setting all off-diagonal elements of $\delta_{\delta_f}(\mathbf{0};R)$ to zero (see Equation \eqref{eq:deltaofdeltas} for definition of the $\delta_{\delta_f}$ operator);
        \item linear models, constructed by the formula $$\widetilde{m}(\widehat{s})=\widehat{f}(\mathbf{0})+\nabla_S \widehat{f}(\mathbf{0};R)^\top\widehat{s},$$ which is the linear interpolation model of $\widehat{f}$ on the $(p+1)$ points $\{\mathbf{0}\}\cup\{r_i:i=1,...,p\}$.
    \end{enumerate}
    
    The underdetermined quadratic model only uses function values in the form of $\widehat{f}(\mathbf{0}),\widehat{f}(r_i)$, and $\widehat{f}(2r_i)$ where $i=1,...,p$.  Using a similar analysis technique as Theorems \ref{thm:fullspaceunderd} and~\ref{thm:subspaced}, it can be shown that the model constructed this way interpolates $\widehat{f}$ on all $(2p+1)$ points $\{\mathbf{0}\}\cup\{r_i:i=1,...,p\}\cup\{2r_i:i=1,...,p\}$.  
    
    We note that all three models satisfy the convergence analysis shown in Section~\ref{sec:converg}.  Indeed, for determined quadratic models, we have proved in Subsection \ref{subsec:modelAcc} that they are $Q_k$-fully quadratic and therefore $Q_k$-fully linear.  For the other two models, notice that a full space linear interpolation model belongs to {\it a class of fully linear models} \footnote{In $\mathbb{R}^n$, this can be viewed as a class of $I_n$-fully linear models.} \cite{audet2017derivative,conn2009introduction}.  Since the underdetermined quadratic and linear models are at least as accurate as linear interpolation models, following a similar proof as in \cite{audet2017derivative,conn2009introduction} and Theorem \ref{thm:fullyquadconsts} in this paper, it can be shown that they are $Q_k$-fully linear, which is sufficient for the results in Section \ref{sec:converg}.
    
    For each of the three models, we consider $(p,p_{\mathrm{rand}})\in\{(1,1), (10,1), (10,3), (10,10)\}$.  Hence, there are in total 12 variants of $\myalg$ to be compared.  We note that among all 12 solvers, only the 6 solvers that have $p=10$ and $p_{\mathrm{rand}}\in\{3, 10\}$ are guaranteed to converge by our analysis in Section \ref{sec:converg}.  Indeed, according to Theorems \ref{thm:alphaDwellalignedD} to \ref{thm:converg_expectation}, we need $q\ge 4(1-\alpha)^{-2}\ln(1\slash\delta_S)$ and $0<\delta_S<1\slash(C_1+1)$, where $\alpha\in(0,1)$, $q\ge p_{\mathrm{rand}}$ is the number of columns in $D_k^R$, and $C_1$ is defined in Lemma \ref{lem:countAslashS}.  In our implementation, we always set $\gamma_{\mathrm{dec}}=1\slash 2$ and $\gamma_{\mathrm{inc}}=2$, which imply $C_1=1$.  Letting $p_{\mathrm{rand}}\ge 4(1-\alpha)^{-2}\ln(1\slash\delta_S)$ with $0<\alpha<1$ and $0<\delta_S<1\slash 2$, we get a lower bound $p_{\mathrm{rand}}>4\ln 2\approx 2.77$.  Hence, the solvers with $p_{\mathrm{rand}}\ge 3$ are guaranteed to converge by our convergence analysis.  For solvers with $p_{\mathrm{rand}}< 3$, our numerical results show that they still have a high probability of converging. 

    We use performance profiles \cite{more2009benchmarking} and data profiles \cite{more2009benchmarking} to present the results.  The performance measure is either the number of function evaluations or runtime.  For each test problem, we let $x^*$ be the point that has the minimum objective function value obtained by all solvers within $100(n+1)$ function evaluations.  We regard a problem as successfully solved with accuracy level $\tau\in (0,1)$ by a solver if, within $100(n+1)$ function evaluations, there exists an iterate $x_k$ such that $$f(x_k)\le f(x^*)+\tau\left(f(x_0)-f(x^*)\right),$$ where $x_0$ is the starting point.  In the performance profile, we plot the proportion of test problems successfully solved by a solver against the ratio between the performance measure of that solver and the best performance measure of all solvers.  In the data profiles, we plot the proportion of test problems successfully solved by a solver against the performance measure of that solver.  For detailed explanations, see \cite{more2009benchmarking}.
    
    For clarity, for all profiles, we use the legend shown in Figure \ref{fig:legend}.  We note that the data profiles based on runtime are truncated at 1 hour.
    \begin{figure}[!htb]
         \centering
         \includegraphics[width=0.4\textwidth]{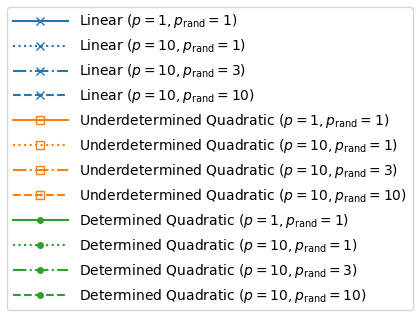}
         \caption{Legend of figures.}
         \label{fig:legend}
    \end{figure}\FloatBarrier

    \subsubsection{Results based on runtime}
    Figures \ref{fig:combinedPerf05&1e-3rt} and \ref{fig:combinedData05&1e-3rt} present the performance and data profiles based on runtime with accuracy levels $\tau=0.5$ and $\tau=10^{-3}$.  Figures \ref{fig:combinedPerf05&1e-3rt} and \ref{fig:combinedData05&1e-3rt} show that, for the low accuracy level $\tau=0.5$, when measured by the runtime, solvers with $p=p_{\mathrm{rand}}=1$ perform better than other choices of $p$ and $p_{\mathrm{rand}}$.  The solver based on determined quadratic models with $p=p_{\mathrm{rand}}=1$ is the best among all solvers.  Moreover, solvers based on both sorts of quadratic models perform better than solvers based on linear models in general.  For the high accuracy level $\tau=10^{-3}$, we see a similar pattern when the runtime is small, but the solver based on underdetermined quadratic models with $p=10$ and $p_{\mathrm{rand}}=10$ and linear models with $p=10$ and $p_{\mathrm{rand}}=3$ solve more problem as runtime increases.  As shown in Figure~\ref{fig:combinedData05&1e-3rt}, within the 1-hour runtime budget, the solver based on underdetermined quadratic models with $p=10$ and $p_{\mathrm{rand}}=10$ finally solves the most percentage of problems.  
    
    This implies that solvers based on quadratic models are generally more efficient than solvers based on linear models, and solvers with very small $p$ and $p_{\mathrm{rand}}$ have the best efficiency.  However, if the runtime is long enough, then solvers satisfying the convergence analysis have the potential to solve more problems.  We should also note that our objective functions are very inexpensive to evaluate.  Therefore, solvers based on quadratic models tend to have better performance on optimization problems where the objective function evaluations are inexpensive and quick solutions are preferred.

    For completeness, the results with $\tau=10^{-1}$ and $\tau=10^{-2}$ can be found in Appendix \ref{app:morenumexp}.
    \begin{figure}[!htb]
         \begin{subfigure}{0.49\textwidth}
             \centering
             \includegraphics[width=\textwidth]{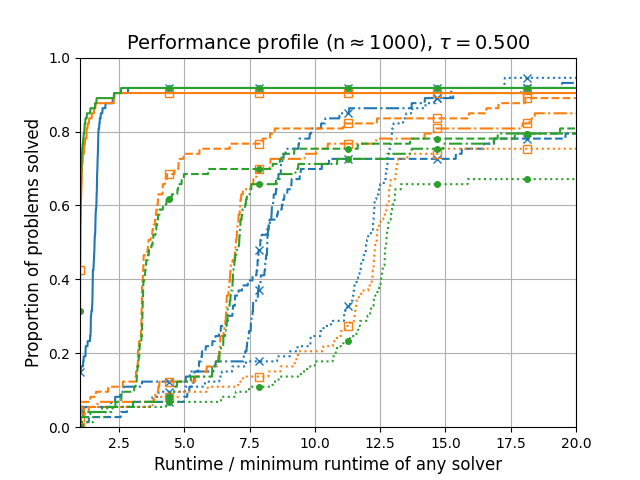}
         \end{subfigure}
         \hfill
         \begin{subfigure}{0.49\textwidth}
             \centering
             \includegraphics[width=\textwidth]{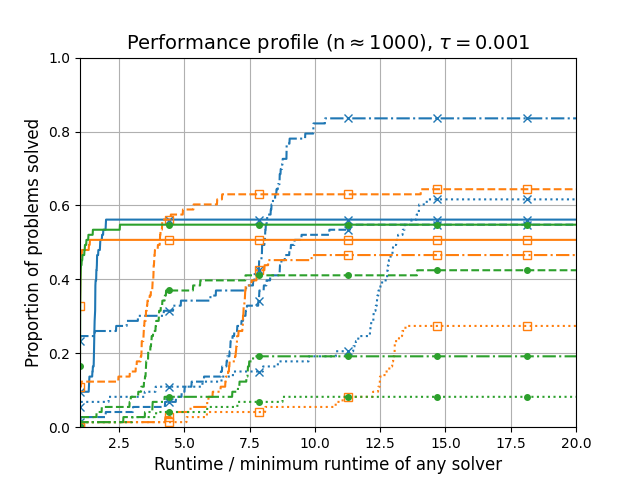}
         \end{subfigure}
         \caption{Performance profiles with $\tau=0.5$ and $\tau=10^{-3}$ comparing the runtime.}
         \label{fig:combinedPerf05&1e-3rt}
    \end{figure}\FloatBarrier
    
    \begin{figure}[!htb]
         \begin{subfigure}{0.49\textwidth}
             \centering
             \includegraphics[width=\textwidth]{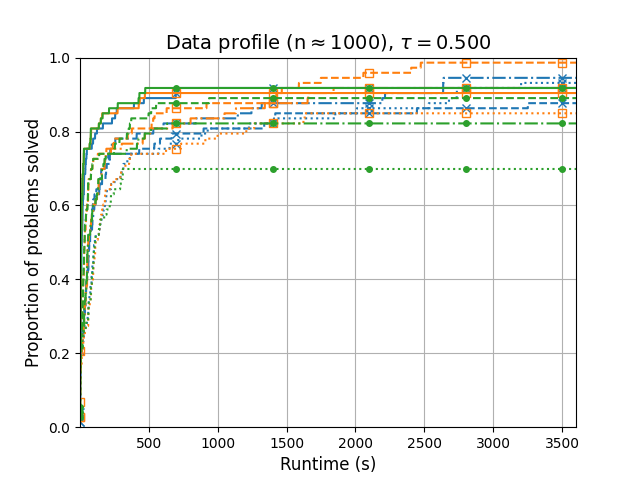}
         \end{subfigure}
         \hfill
         \begin{subfigure}{0.49\textwidth}
             \centering
             \includegraphics[width=\textwidth]{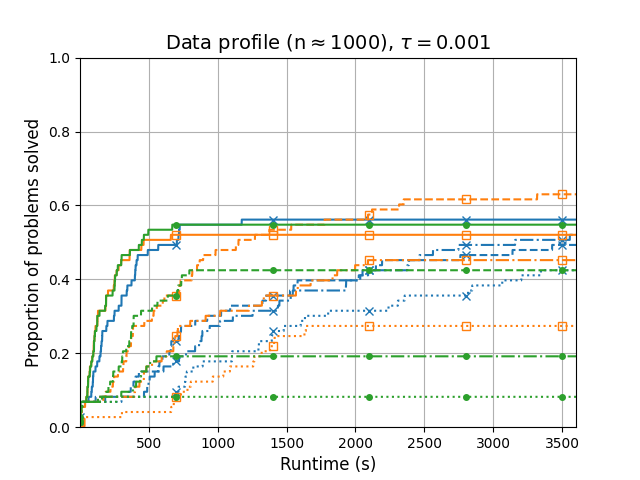}
         \end{subfigure}
         \caption{Data profiles with $\tau=0.5$ and $\tau=10^{-3}$ comparing the runtime.}
         \label{fig:combinedData05&1e-3rt}
    \end{figure}\FloatBarrier

    \subsubsection{Results based on function evaluations}
    Figures \ref{fig:combinedPerf05&1e-3evals} and \ref{fig:combinedData05&1e-3evals} present the performance and data profiles based on function evaluations with accuracy levels $\tau=0.5$ and $\tau=10^{-3}$.  We can see that when measured by function evaluations, for both low accuracy level $\tau=0.5$ and high accuracy level $\tau=10^{-3}$, solvers based on linear models perform better than other solvers in general, and the performance of solvers based on underdetermined models is better than solvers based on determined quadratic models in general.  
    
    This implies that although quadratic models are more accurate than linear models and thus may reduce the number of iterations required to reach a certain accuracy level, the increased number of function evaluations they need still makes the solvers take more overall function evaluations.  
    
    For completeness, the results with $\tau=10^{-1}$ and $\tau=10^{-2}$ can be found in Appendix \ref{app:morenumexp}.
    \begin{figure}[!htb]
         \begin{subfigure}{0.49\textwidth}
             \centering
             \includegraphics[width=\textwidth]{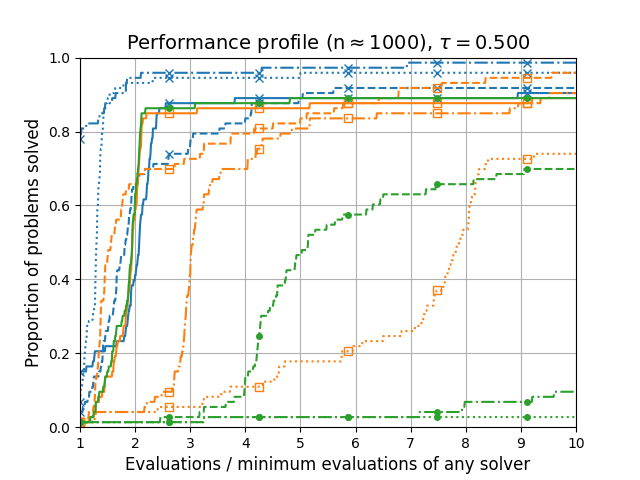}
         \end{subfigure}
         \hfill
         \begin{subfigure}{0.49\textwidth}
             \centering
             \includegraphics[width=\textwidth]{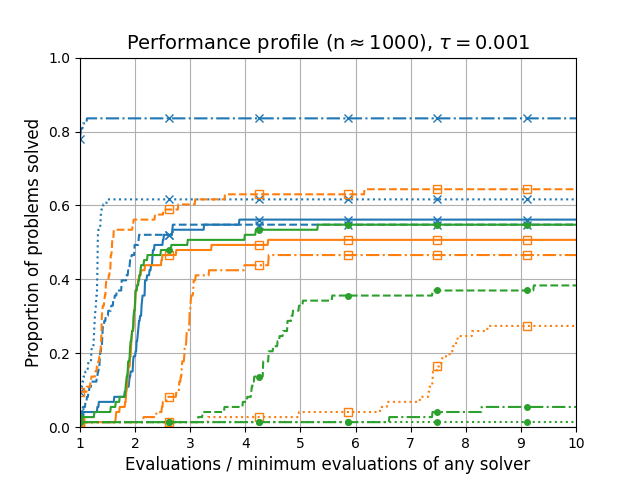}
         \end{subfigure}
         \caption{Performance profiles with $\tau=0.5$ and $\tau=10^{-3}$ comparing the function evaluations.}
         \label{fig:combinedPerf05&1e-3evals}
    \end{figure}\FloatBarrier
    
    \begin{figure}[!htb]
         \begin{subfigure}{0.49\textwidth}
             \centering
             \includegraphics[width=\textwidth]{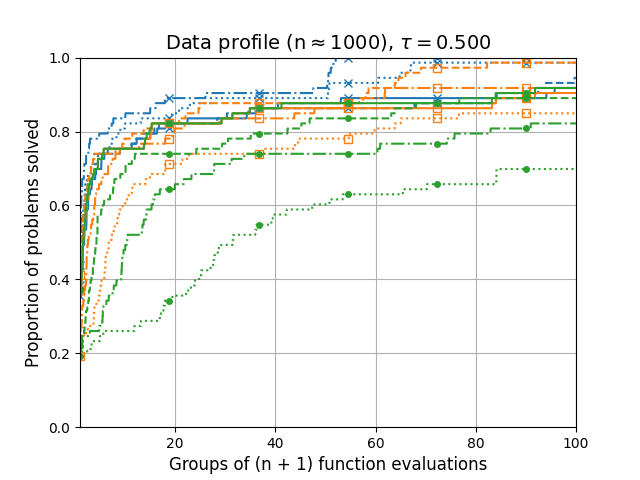}
         \end{subfigure}
         \hfill
         \begin{subfigure}{0.49\textwidth}
             \centering
             \includegraphics[width=\textwidth]{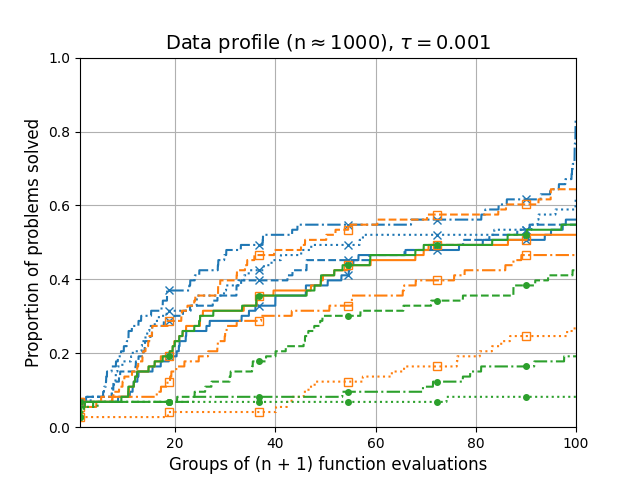}
         \end{subfigure}
         \caption{Data profiles with $\tau=0.5$ and $\tau=10^{-3}$ comparing the function evaluations.}
         \label{fig:combinedData05&1e-3evals}
    \end{figure}\FloatBarrier

    \subsection{Results on nonlinear least-squares problems}
    For nonlinear least-squares problems, there exists another method to construct quadratic models.  As shown in \cite{cartis2023scalable}, we can first construct a subspace linear interpolation model for $\bm{g}$, denoted by $\widetilde{m}_{\bm{g}}(\widehat{s})$, which interpolates $\widehat{\bm{g}}(\widehat{s})=\bm{g}(x^0+Q\widehat{s})$ on the $(p+1)$ points $\{\mathbf{0}\}\cup\{r_i:i=1,...,p\}$.  Then we compute the square of the norm of $\widetilde{m}_{\bm{g}}(\widehat{s})$ to get a quadratic model for $\widehat{f}$.  That is, $$\widehat{f}(\widehat{s})\approx\frac{1}{2}\left\|\widetilde{m}_{\bm{g}}(\widehat{s})\right\|^2.$$

    We note that the quadratic models constructed by this method may not interpolate $\widehat{f}$ on more than $(p+1)$ points.  However, it can be shown that they are $Q_k$-fully linear and therefore satisfy the convergence analysis in Section \ref{sec:converg}.  The proof can be found in \cite{cartis2023scalable} or by applying the theory of model functions operations developed in \cite{chen2022error}.

    In this set of experiments, we compare the performance of $\myalg$ based on the model mentioned above and the three models described in Subsection \ref{subsec:numexpgeneral}.  The results are presented by performance profiles \cite{more2009benchmarking} and data profiles \cite{more2009benchmarking} with the same parameters as in Subsection \ref{subsec:numexpgeneral}.  For all the profiles, we use the legend shown in Figure \ref{fig:legend} plus the following new labels.
    \begin{figure}[!htb]
         \centering
         \includegraphics[width=0.36\textwidth]{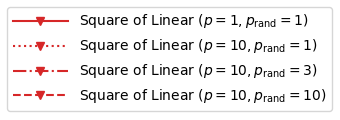}
         \caption{Legend of figures (continued).}
         \label{fig:legendcont}
    \end{figure}\FloatBarrier

    \subsubsection{Results based on runtime}
    Figures \ref{fig:LSPerfwSL05&1e-3rt} and \ref{fig:LSDatawSL05&1e-3rt} present the performance and data profiles based on runtime with accuracy levels $\tau=0.5$ and $\tau=10^{-3}$.  Figures \ref{fig:LSPerfwSL05&1e-3rt} and \ref{fig:LSDatawSL05&1e-3rt} show that, for the low accuracy level $\tau=0.5$, when measured by the runtime, solvers based on square-of-linear models perform better than linear models, but worse than other quadratic models in general.  However, for the high accuracy level $\tau=10^{-3}$, the solver based on square-of-linear models with $p=p_{\mathrm{rand}}=10$ is significantly better than other solvers, which agrees with past results found in \cite{cartis2023scalable}.  This implies that exploring the structure of objective functions increases the efficiency of solvers, especially for high accuracy levels.

    For completeness, the results with $\tau=10^{-1}$ and $\tau=10^{-2}$ can be found in Appendix \ref{app:morenumexp}.
    \begin{figure}[!htb]
         \begin{subfigure}{0.49\textwidth}
             \centering
             \includegraphics[width=\textwidth]{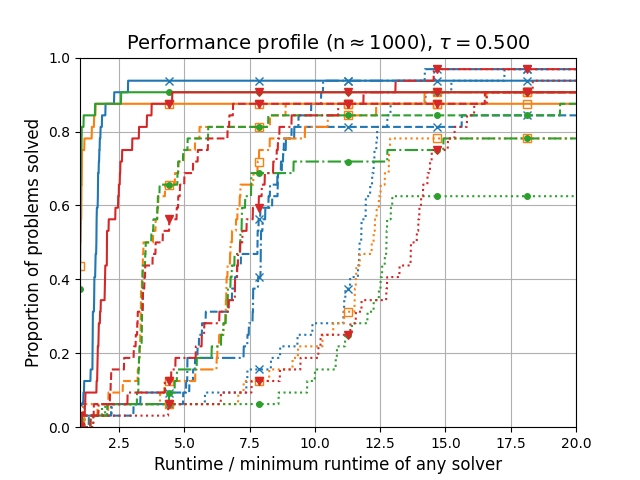}
         \end{subfigure}
         \hfill
         \begin{subfigure}{0.49\textwidth}
             \centering
             \includegraphics[width=\textwidth]{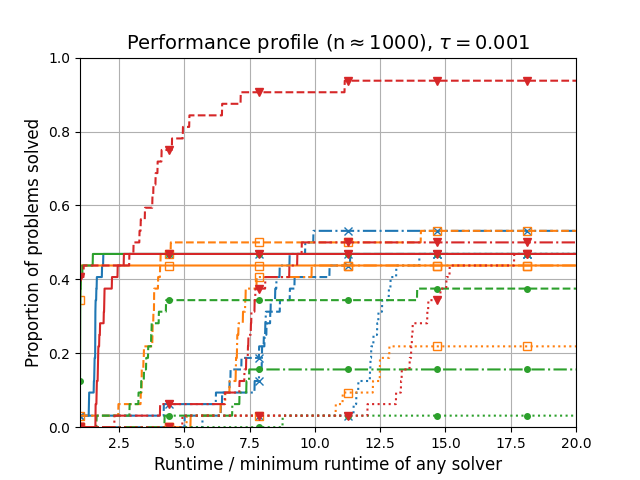}
         \end{subfigure}
         \caption{Performance profiles with $\tau=0.5$ and $\tau=10^{-3}$ comparing the runtime.}
         \label{fig:LSPerfwSL05&1e-3rt}
    \end{figure}\FloatBarrier
    
    \begin{figure}[!htb]
         \begin{subfigure}{0.49\textwidth}
             \centering
             \includegraphics[width=\textwidth]{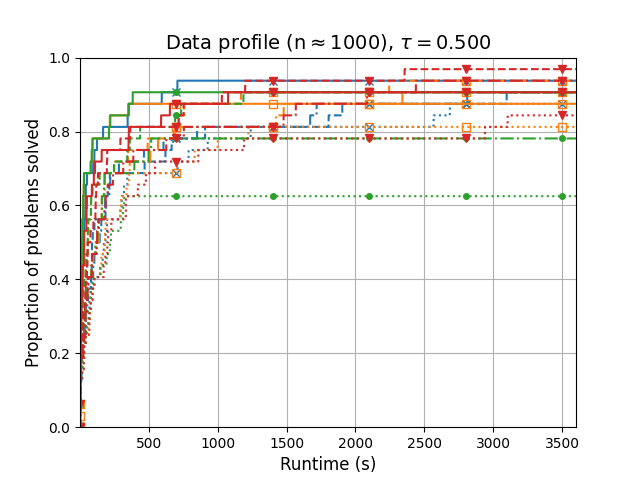}
         \end{subfigure}
         \hfill
         \begin{subfigure}{0.49\textwidth}
             \centering
             \includegraphics[width=\textwidth]{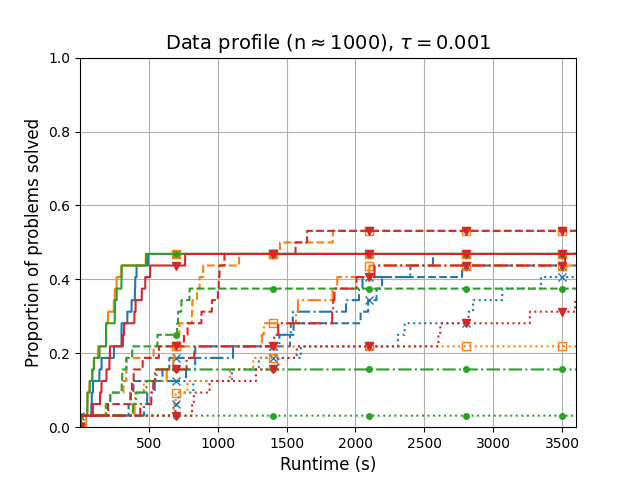}
         \end{subfigure}
         \caption{Data profiles with $\tau=0.5$ and $\tau=10^{-3}$ comparing the runtime.}
         \label{fig:LSDatawSL05&1e-3rt}
    \end{figure}\FloatBarrier

    \subsubsection{Results based on function evaluations}
    Figures \ref{fig:LSPerfwSL05&1e-3evals} and \ref{fig:LSDatawSL05&1e-3evals} present the performance and data profiles based on function evaluations with accuracy levels $\tau=0.5$ and $\tau=10^{-3}$.  When measured by function evaluations, as shown in Figures \ref{fig:LSPerfwSL05&1e-3evals} and \ref{fig:LSDatawSL05&1e-3evals}, for both accuracy levels $\tau=0.5$ and $\tau=10^{-3}$, the solvers based on square-of-linear models perform the best in general.  Once again, the solver based on square-of-linear models with $p=p_{\mathrm{rand}}=10$ is significantly better than other solvers and agrees with past results found in \cite{cartis2023scalable}.  This again demonstrates the advantage of exploring the structure of objective functions.
    
    For completeness, the results with $\tau=10^{-1}$ and $\tau=10^{-2}$ can be found in Appendix \ref{app:morenumexp}.
    \begin{figure}[!htb]
         \begin{subfigure}{0.49\textwidth}
             \centering
             \includegraphics[width=\textwidth]{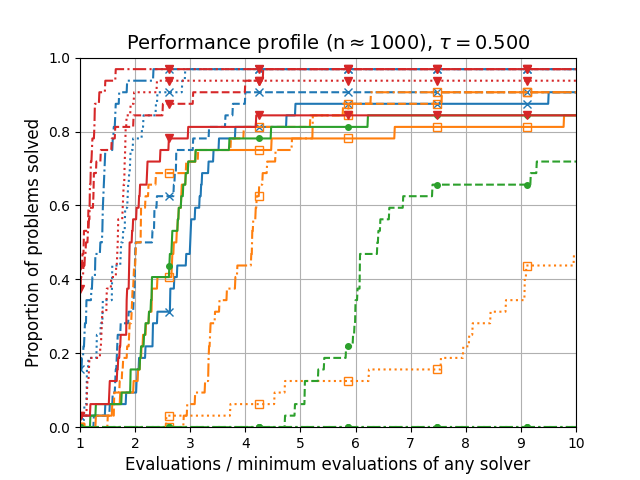}
         \end{subfigure}
         \hfill
         \begin{subfigure}{0.49\textwidth}
             \centering
             \includegraphics[width=\textwidth]{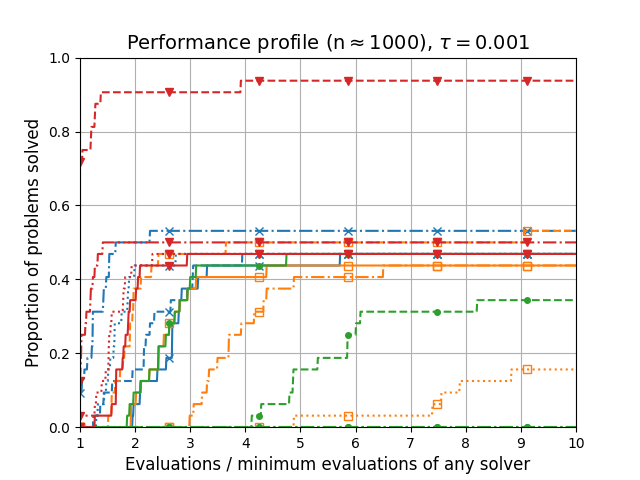}
         \end{subfigure}
         \caption{Performance profiles with $\tau=0.5$ and $\tau=10^{-3}$ comparing the function evaluations.}
         \label{fig:LSPerfwSL05&1e-3evals}
    \end{figure}\FloatBarrier
    
    \begin{figure}[!htb]
         \begin{subfigure}{0.49\textwidth}
             \centering
             \includegraphics[width=\textwidth]{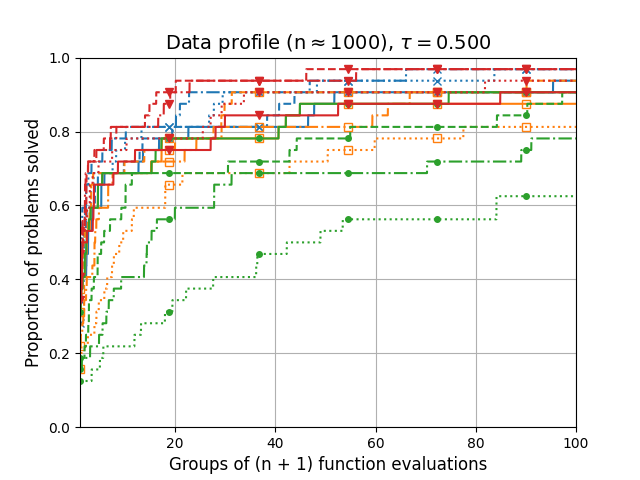}
         \end{subfigure}
         \hfill
         \begin{subfigure}{0.49\textwidth}
             \centering
             \includegraphics[width=\textwidth]{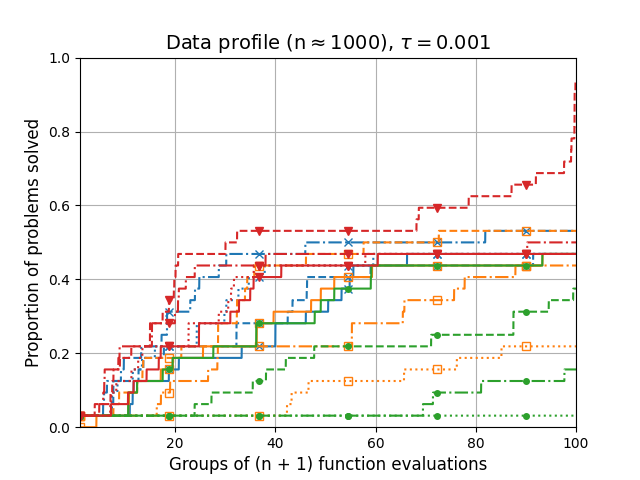}
         \end{subfigure}
         \caption{Data profiles with $\tau=0.5$ and $\tau=10^{-3}$ comparing the function evaluations.}
         \label{fig:LSDatawSL05&1e-3evals}
    \end{figure}\FloatBarrier

	\section{Conclusion}\label{sec:conclusion}
	In this paper, we proposed $\myalg$, a random subspace derivative-free trust-region algorithm for unconstrained deterministic optimization problems.  This algorithm extends the previous random subspace trust-region algorithms to a much broader scope by using quadratic approximation models instead of linear models and getting rid of the requirement that the problems take a certain form (e.g., nonlinear least-squares).  We provided theoretical guarantees for $\myalg$ including model accuracy, sample set geometry, and subspace quality.  In particular, we provided a $Q$-fully quadratic modeling technique that is easy to handle in both theoretical analysis and computer programming aspects.  Based on a framework introduced in~\cite{cartis2023scalable}, we proved the almost-sure global convergence of $\myalg$ and gave an expected result on the number of iterations required to reach a certain accuracy.  Numerical results demonstrated the efficiency of using quadratic models over linear models in $\myalg$ and the benefits of exploring the structure of objective functions when possible.

    In addition, most of the theoretical results in this paper can be applied more broadly to other DFO analyses, as they do not depend on the given algorithm.  In particular, we note that Subsections~\ref{subsec:modelconstruct} and \ref{subsec:modelAcc} provide a new method to construct $Q$-fully quadratic models, which is more accurate than required by the framework in \cite{cartis2023scalable} and the convergence analysis in this paper.  Subsections \ref{subsec:samplesetgeometry} and \ref{subsec:subspaceQual} provide theoretical guarantees for the sample set management procedure and the subspace updating procedure used in this paper.  These results can also be extended to provide theoretical guarantees for some of the procedures used in \cite{cartis2023scalable}.  For example, Theorem \ref{thm:orthisthebest} provides a theoretical explanation for the direction-generating mechanism used in \cite[Algorithm~5]{cartis2023scalable}.  Subsection \ref{subsec:subspaceQual} shows how the choice of $p_{\mathrm{rand}}$, $QR$-factorization, and matrix enhancement influence $\alpha$-well alignedness, which are not clearly explained in \cite{cartis2023scalable}.

\appendix
\section{More numerical results}\label{app:morenumexp}
    \subsection{Results on general unconstrained problems}
    \subsubsection{Results based on runtime}
    \begin{figure}[!htb]
         \begin{subfigure}{0.49\textwidth}
             \centering
             \includegraphics[width=\textwidth]{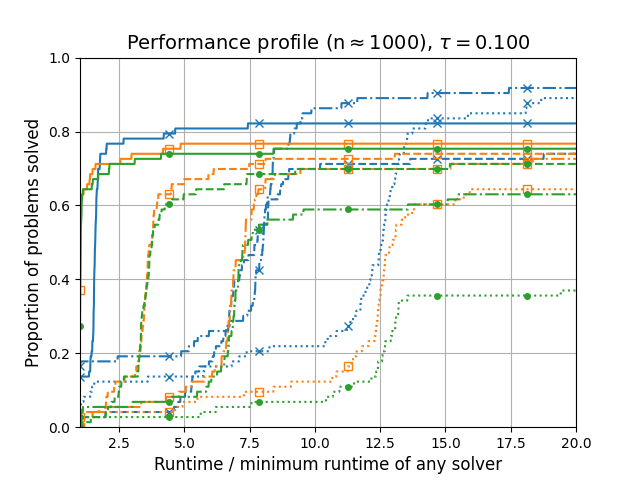}
         \end{subfigure}
         \hfill
         \begin{subfigure}{0.49\textwidth}
             \centering
             \includegraphics[width=\textwidth]{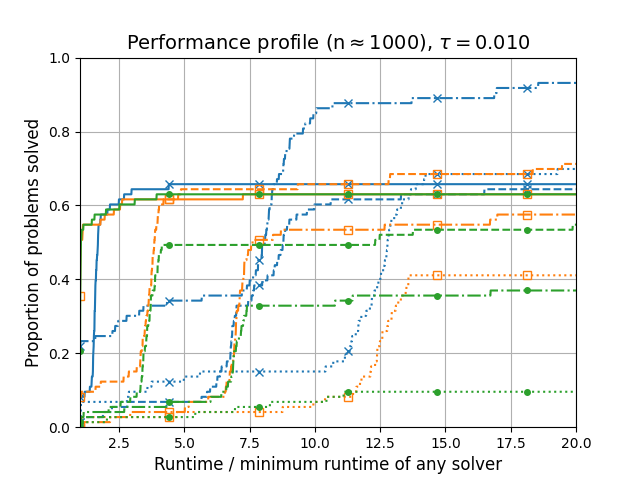}
         \end{subfigure}
         \caption{Performance profiles with $\tau=10^{-1}$ and $\tau=10^{-2}$ comparing the runtime.}
         \label{fig:combinedPerf01&1e-2rt}
    \end{figure}\FloatBarrier
    \begin{figure}[!htb]
         \begin{subfigure}{0.49\textwidth}
             \centering
             \includegraphics[width=\textwidth]{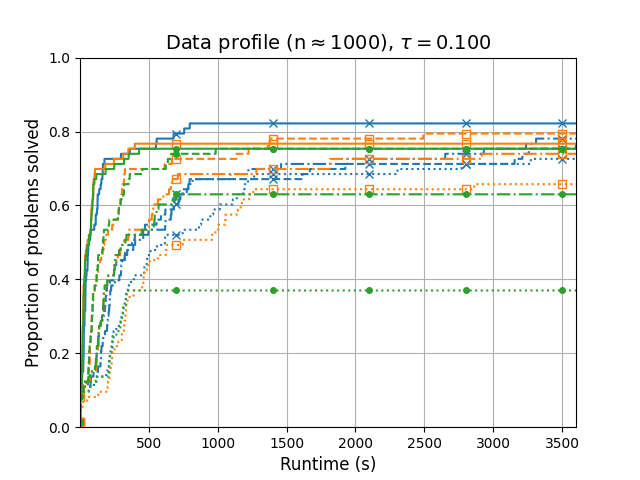}
         \end{subfigure}
         \hfill
         \begin{subfigure}{0.49\textwidth}
             \centering
             \includegraphics[width=\textwidth]{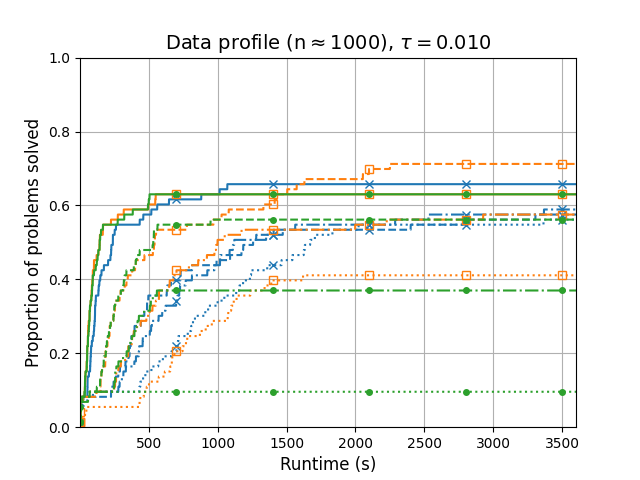}
         \end{subfigure}
         \caption{Data profiles with $\tau=10^{-1}$ and $\tau=10^{-2}$ comparing the runtime.}
         \label{fig:combinedData01&1e-2rt}
    \end{figure}\FloatBarrier

    \subsubsection{Results based on function evaluations}
    \begin{figure}[!htb]
         \begin{subfigure}{0.49\textwidth}
             \centering
             \includegraphics[width=\textwidth]{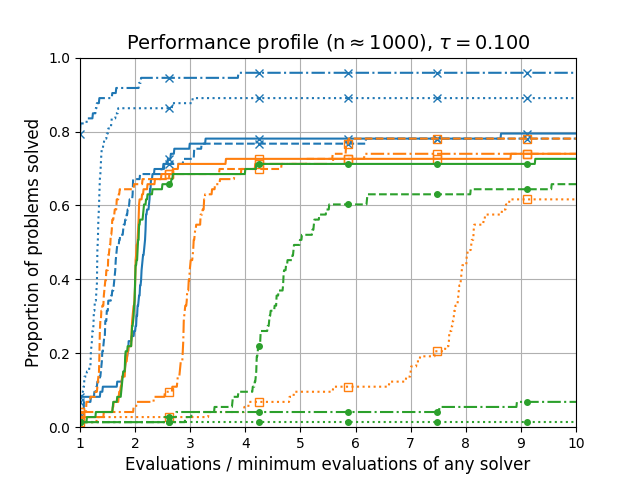}
         \end{subfigure}
         \hfill
         \begin{subfigure}{0.49\textwidth}
             \centering
             \includegraphics[width=\textwidth]{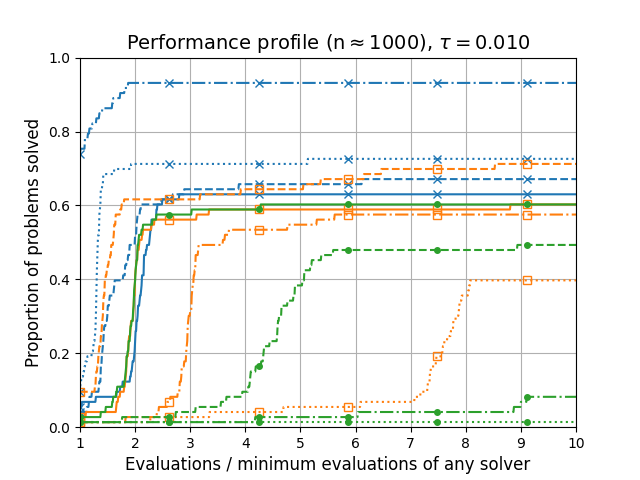}
         \end{subfigure}
         \caption{Performance profiles with $\tau=10^{-1}$ and $\tau=10^{-2}$ comparing the function evaluations.}
         \label{fig:combinedPerf01&1e-2evals}
    \end{figure}\FloatBarrier
    \begin{figure}[!htb]
         \begin{subfigure}{0.49\textwidth}
             \centering
             \includegraphics[width=\textwidth]{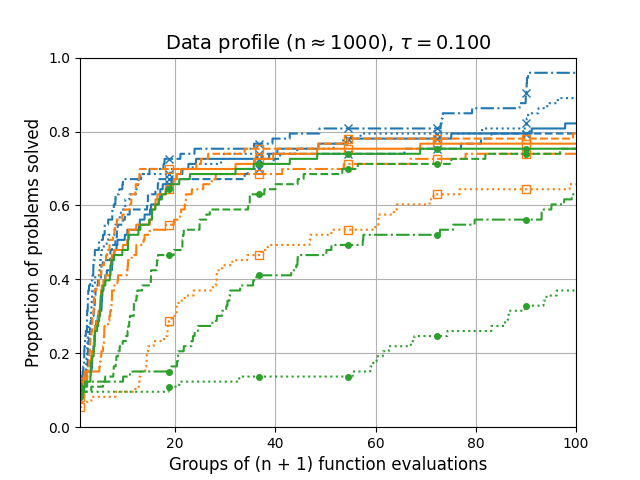}
         \end{subfigure}
         \hfill
         \begin{subfigure}{0.49\textwidth}
             \centering
             \includegraphics[width=\textwidth]{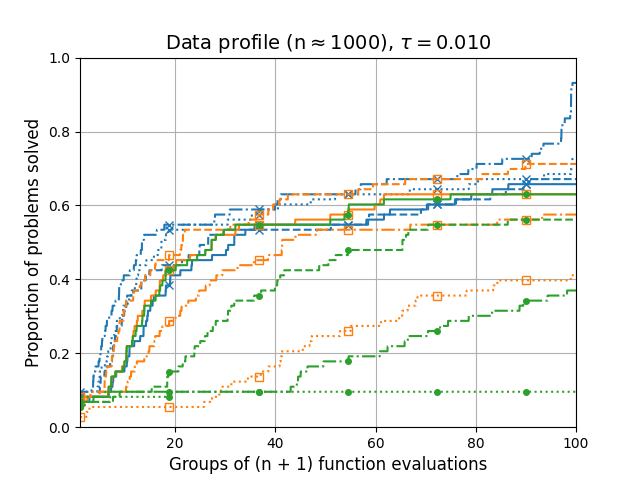}
         \end{subfigure}
         \caption{Data profiles with $\tau=10^{-1}$ and $\tau=10^{-2}$ comparing the function evaluations.}
         \label{fig:combinedData01&1e-2evals}
    \end{figure}\FloatBarrier

    \subsection{Results on nonlinear least-squares problems}
    \subsubsection{Results based on runtime}
    \begin{figure}[!htb]
         \begin{subfigure}{0.49\textwidth}
             \centering
             \includegraphics[width=\textwidth]{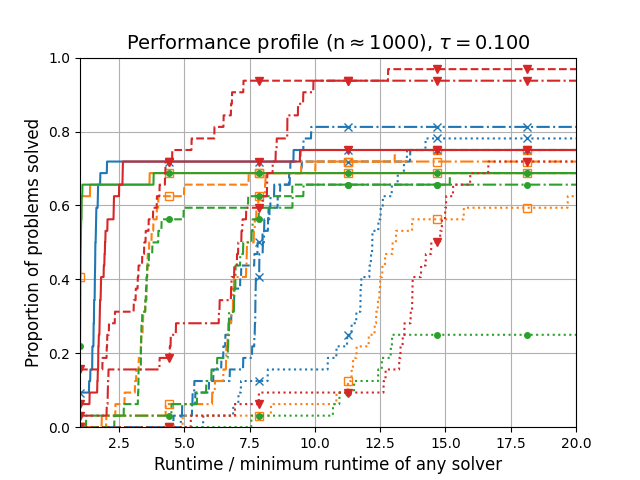}
         \end{subfigure}
         \hfill
         \begin{subfigure}{0.49\textwidth}
             \centering
             \includegraphics[width=\textwidth]{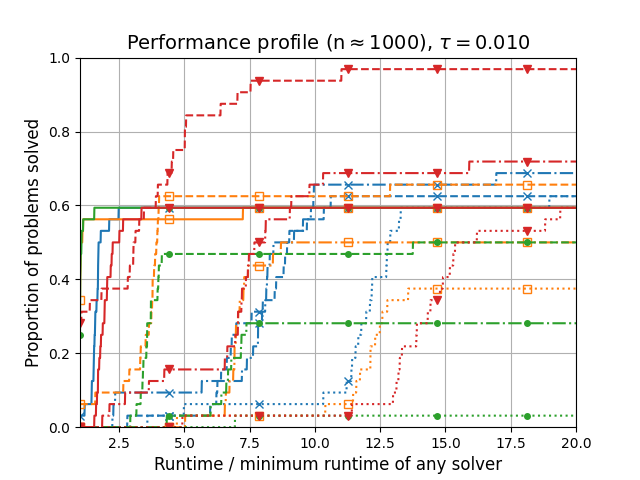}
         \end{subfigure}
         \caption{Performance profiles with $\tau=10^{-1}$ and $\tau=10^{-2}$ comparing the runtime.}
         \label{fig:LSPerfwSL01&1e-2rt}
    \end{figure}\FloatBarrier
    \begin{figure}[!htb]
         \begin{subfigure}{0.49\textwidth}
             \centering
             \includegraphics[width=\textwidth]{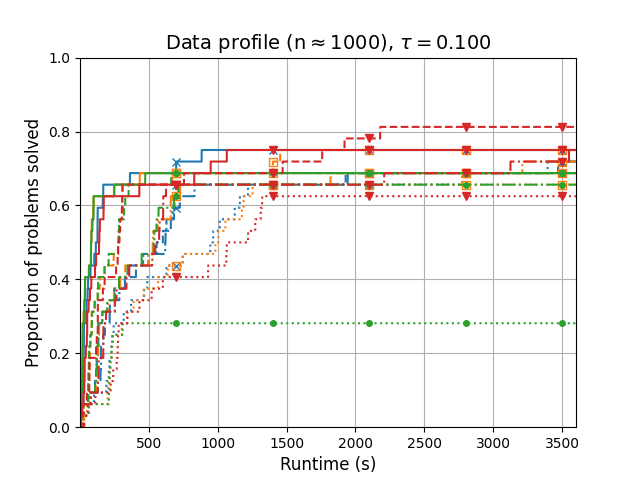}
         \end{subfigure}
         \hfill
         \begin{subfigure}{0.49\textwidth}
             \centering
             \includegraphics[width=\textwidth]{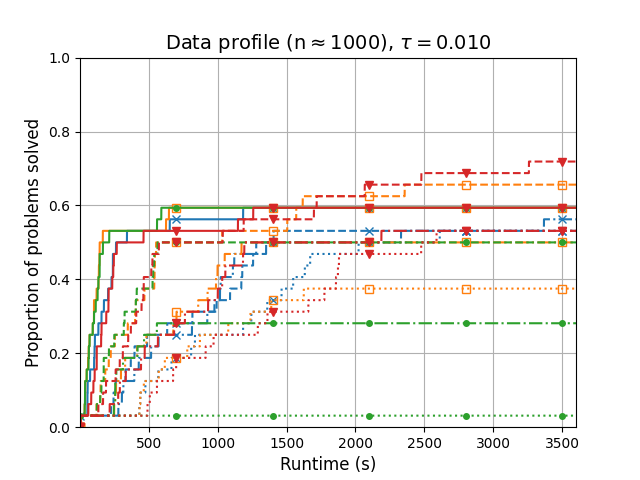}
         \end{subfigure}
         \caption{Data profiles with $\tau=10^{-1}$ and $\tau=10^{-2}$ comparing the runtime.}
         \label{fig:LSDatawSL01&1e-2rt}
    \end{figure}\FloatBarrier

    \subsubsection{Results based on function evaluations}
    \begin{figure}[!htb]
         \begin{subfigure}{0.49\textwidth}
             \centering
             \includegraphics[width=\textwidth]{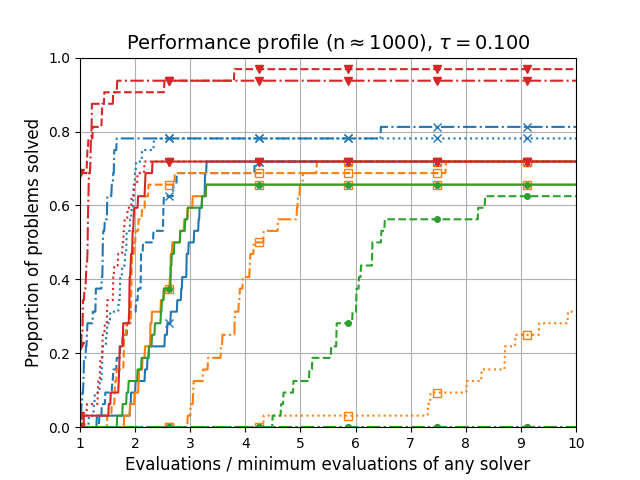}
         \end{subfigure}
         \hfill
         \begin{subfigure}{0.49\textwidth}
             \centering
             \includegraphics[width=\textwidth]{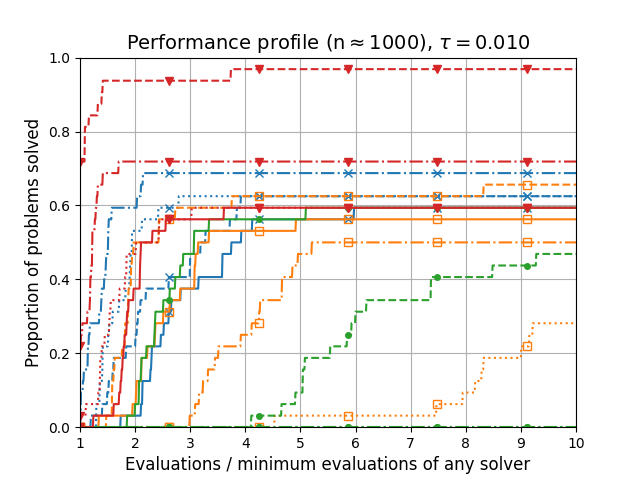}
         \end{subfigure}
         \caption{Performance profiles with $\tau=10^{-1}$ and $\tau=10^{-2}$ comparing the function evaluations.}
         \label{fig:LSPerfwSL01&1e-2evals}
    \end{figure}\FloatBarrier
    \begin{figure}[!htb]
         \begin{subfigure}{0.49\textwidth}
             \centering
             \includegraphics[width=\textwidth]{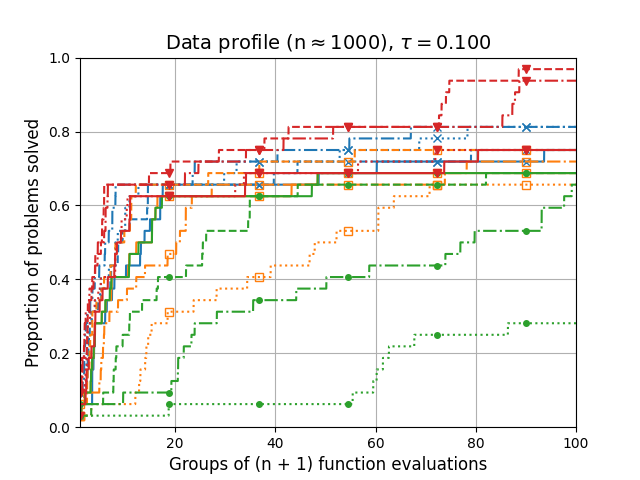}
         \end{subfigure}
         \hfill
         \begin{subfigure}{0.49\textwidth}
             \centering
             \includegraphics[width=\textwidth]{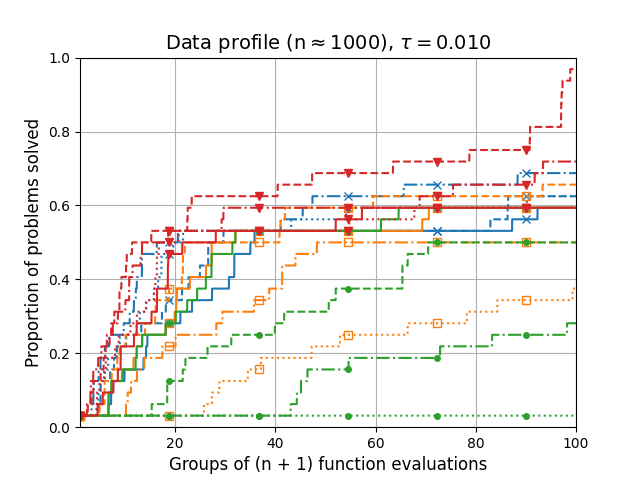}
         \end{subfigure}
         \caption{Data profiles with $\tau=10^{-1}$ and $\tau=10^{-2}$ comparing the function evaluations.}
         \label{fig:LSDatawSL01&1e-2evals}
    \end{figure}\FloatBarrier

\bibliographystyle{siam}
\bibliography{references}
\end{document}